\begin{document}

\title{Certain residual properties of~generalized~Baumslag--Solitar groups}

\author{E.~V.~Sokolov}
\address{Ivanovo State University, Russia}
\email{ev-sokolov@yandex.ru}

\begin{abstract}
Let~$G$ be~a~generalized Baumslag--Solitar group and~$\mathcal{C}$ be~a~class of~groups containing at~least one non-unit group and~closed under~taking subgroups, extensions, and~Cartesian products of~the~form $\prod_{y \in Y}X_{y}$, where $X$,~$Y \in \mathcal{C}$ and~$X_{y}$ is~an~isomorphic copy of~$X$ for~every $y \in Y$. We give a~criterion for~$G$ to~be residually a~$\mathcal{C}$\nobreakdash-group provided~$\mathcal{C}$ consists only of~periodic groups. We also prove that~$G$ is~residually a~torsion-free $\mathcal{C}$\nobreakdash-group if~$\mathcal{C}$ contains at~least one non-pe\-ri\-od\-ic group and~is closed under~taking homomorphic images. These statements generalize and~strengthen some known results. Using the~first of~them, we provide criteria for~a~GBS\nobreakdash-group to~be a)~residually nilpotent; b)~residually torsion-free nilpotent; c)~residually free.
\end{abstract}

\keywords{Residual finiteness, residual solvability, residual nilpotence, root-class residuality, generalized Baumslag--Solitar group, fundamental group of~a~graph of~groups}

\maketitle

\newtheorem{theorem}{Theorem}
\newtheorem{corollary}{Corollary}
\newtheorem{proposition}{Proposition}[section]
\theoremstyle{definition}
\newtheorem*{algorithm1}{Algorithm}
\newtheorem*{algorithm2}{An~algorithm for~verifying the~condition of~Statement~2\nobreakdash-\textit{c} of~Theorem~\ref{theorem05}}

\vspace{-2pt}

\section*{Introduction}

A~group is~called a~\emph{generalized Baumslag--Solitar group}, or~a~\emph{GBS-group}, if~it~is~the~fundamental group of~a~graph of~groups with~infinite cyclic vertex and~edge groups. GBS-groups have been actively studied in~recent years~\cite{Levitt2007, Levitt2015, DelgadoRobinsonTimm2011, Alonso2012, DelgadoRobinsonTimm2014, DelgadoRobinsonTimm2017, Dudkin2017, Dudkin2018, Dudkin2020}, and~many of~these investigations are devoted to~establishing a~connection between the~algebraic properties of~GBS-groups and~the~structure of~the~graphs defining them. The~aim of~this paper is~to~describe certain residual properties of~a~GBS-group in~terms of~the~associated graph of~groups. We strengthen some known results (for~example, on~the~residual finiteness and~the~residual $p$\nobreakdash-fi\-nite\-ness of~GBS-groups) and~give a~criterion for~the~residual nilpotence of~a~GBS\nobreakdash-group.

Let $\mathcal{C}$ be~a~class of~groups. A~group~$G$ is~said to~be \emph{residually a~$\mathcal{C}$\nobreakdash-group} if, for~any non-unit element $g \in G$, there exists a~homomorphism~$\sigma$ of~$G$ onto~a~group of~$\mathcal{C}$ such that $g\sigma \ne 1$. The~most commonly considered situation is~when $\mathcal{C}$ is~the~class of~all finite groups, all finite $p$\nobreakdash-groups (where $p$ is~a~prime number), all nilpotent groups or~all solvable groups. In~these cases $G$ is~called \emph{residually finite}, \emph{residually $p$\nobreakdash-finite}, \emph{residually nilpotent} or~\emph{residually solvable} respectively.

We say that a~class $\mathcal{C}$ of~groups is~\emph{root} if~it~contains at~least one non-unit group and~is closed under~taking subgroups, extensions, and~Cartesian products of~the~form $\prod_{y \in Y}X_{y}$, where $X$,~$Y \in \mathcal{C}$ and~$X_{y}$ is~an~isomorphic copy of~$X$ for~every $y \in Y$. The~notion of~a~root class was introduced by~Gruenberg~\cite{Gruenberg1957}, and~the~above definition is~equivalent to~that given in~\cite{Gruenberg1957}; see~\cite{Sokolov2015} for~details.

The~classes of~all finite groups, all finite $p$\nobreakdash-groups, all periodic groups of~finite exponent, all solvable groups, and~all tor\-sion-free groups can serve as~examples of~root classes. It~is~also easy to~see that the~intersection of~any number of~root classes is~again a~root class. At~the~same time, the~classes of~all nilpotent groups, all tor\-sion-free nilpotent groups, and~all finite nilpotent groups are not~root because they are not~closed under~taking extensions.

The~main goal of~this paper is~to~get necessary and~sufficient conditions for~a~GBS-group to~be residually a~$\mathcal{C}$\nobreakdash-group, where $\mathcal{C}$ is~an~arbitrary, not~any specific root class. The~sense of~studying residually $\mathcal{C}$\nobreakdash-groups, where $\mathcal{C}$ is~an~arbitrary class of~groups, is~to~get many results at~once using the~same reasoning. This approach was originally proposed in~\cite{Gruenberg1957, Malcev1958} and~turned out to~be very fruitful in~the~study of~free constructions of~groups in~the~case when $\mathcal{C}$ was~a~root class; see,~e.~g.~\cite{AzarovTieudjo2002, Sokolov2015, Tumanova2017, Tumanova2019, SokolovTumanova2019, SokolovTumanova2020IVM, SokolovTumanova2020LJM, SokolovTumanova2020SMJ}.

For~a~root class~$\mathcal{C}$ and~a~GBS\nobreakdash-group~$G$, we give a~criterion for~$G$ to~be residually a~$\mathcal{C}$\nobreakdash-group if~$\mathcal{C}$ consists of~only periodic groups (Theorem~\ref{theorem03}) and~a~sufficient condition for~$G$ to~be residually a~$\mathcal{C}$\nobreakdash-group if~$\mathcal{C}$ contains at~least one non-pe\-ri\-od\-ic group (Theorem~\ref{theorem04}). Using the~first of~these results, we prove criteria for~a~GBS\nobreakdash-group to~be a)~residually nilpotent (Theorem~\ref{theorem05}); b)~residually tor\-sion-free nilpotent and~residually free (Theorem~\ref{theorem06}). All the~proofs use only the~classical methods of~combinatorial group theory and~the~basic concepts of~graph theory.

\section{Statement of~results}\label{section01}

First, we formulate a~number of~known statements on~the~residual properties of~(ordinary) Baumslag--Solitar groups since they complement the~results obtained in~this paper and~are used in~the~proofs of~some of~them.

Recall that a~\emph{Baumslag--Solitar group} is~a~group with~the~presentation
$$
\mathrm{BS}(m,n) = \langle a, b;\ a^{-1}b^{m}a = b^{n}\rangle,
$$
where $m$ and~$n$ are non-zero integers. Since $\mathrm{BS}(m,n)$, $\mathrm{BS}(n,m)$, and~$\mathrm{BS}(-m,-n)$ are pairwise isomorphic, we can assume without loss of~generality that $|n| \geqslant m > 0$.

We also recall that if~$\rho$ is~a~set of~primes, then a~\emph{$\rho$\nobreakdash-num\-ber} is~an~integer, all prime divisors of~which belong to~$\rho$, and~a~\emph{$\rho$\nobreakdash-group} is~a~periodic group, the~orders of~all elements of~which are $\rho$\nobreakdash-num\-bers. If~$\rho$ consists of~one number~$p$, then we write ``$p$\nobreakdash-'' instead of~``$\{p\}$\nobreakdash-''.

For~a~class of~groups~$\mathcal{C}$ consisting only of~periodic groups, let $\rho(\mathcal{C})$ denote the~set of~primes defined as~follows: $p \in \rho(\mathcal{C})$ if~and~only if~there exists a~$\mathcal{C}$\nobreakdash-group~$X$ such that $p$ divides the~order of~some element of~$X$.

\begin{theorem}\label{theorem01}
\textup{\cite{Tumanova2017}}
Let $\mathcal{C}$ be~a~root class of~groups consisting only of~periodic groups and~closed under~taking quotient groups, $\rho(\mathcal{C})$ be~the~set of~primes defined above. Then the~following statements hold.

\textup{1.}\hspace{1ex}If $1 < m < |n|$, then $\mathrm{BS}(m,n)$ is~not~residually a~$\mathcal{C}$\nobreakdash-group.

\textup{2.}\hspace{1ex}$\mathrm{BS}(m,m)$ is~residually a~$\mathcal{C}$\nobreakdash-group if~and~only if~$m$ is~a~$\rho(\mathcal{C})$\nobreakdash-num\-ber.

\textup{3.}\hspace{1ex}$\mathrm{BS}(m,-m)$ is~residually a~$\mathcal{C}$\nobreakdash-group if~and~only if~$m$ is~a~$\rho(\mathcal{C})$\nobreakdash-num\-ber and~$2 \in \rho(\mathcal{C})$.

\textup{4.}\hspace{1ex}$\mathrm{BS}(1,n)$, $|n| \ne 1$, is~residually a~$\mathcal{C}$\nobreakdash-group if~and~only if~there exists $p \in \rho(\mathcal{C})$ not~dividing $n$ and~such that the~order of~the~image $n+p\mathbb{Z}$ of~$n$ in~the~multiplicative group of~$\mathbb{Z}_{p}$ is~a~$\rho(\mathcal{C})$\nobreakdash-num\-ber.
\end{theorem}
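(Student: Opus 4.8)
The plan is to realize $\mathrm{BS}(m,n)$ as the HNN-extension of the infinite cyclic group $\langle b\rangle$ with stable letter $a$ and associated subgroups $\langle b^{m}\rangle$, $\langle b^{n}\rangle$, identified via $b^{m}\mapsto b^{n}$, and to analyse its periodic quotients through this structure, following the root-class machinery for HNN-extensions of~\cite{Tumanova2017}. First I would record two basic facts that drive every case. Since $\mathcal{C}$ is a root class closed under~subgroups and~extensions, a cyclic group $\mathbb{Z}/d$ lies in~$\mathcal{C}$ if~and~only if~$d$ is~a~$\rho(\mathcal{C})$-number; hence the~$\mathcal{C}$-quotients of~$\langle b\rangle$ are exactly the~groups $\mathbb{Z}/d$ with~$d$ a~$\rho(\mathcal{C})$-number. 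Moreover, if~$\sigma\colon G\to X$ is~any homomorphism onto~a~periodic group and~$d=|b\sigma|$, then $b^{m}\sigma$ and~$b^{n}\sigma$ are conjugate, so they have equal order and~$\gcd(d,m)=\gcd(d,n)$; this single numerical constraint is~the~source of~all the~necessity directions. Britton's lemma is~used throughout to~certify that a~candidate witness element is~genuinely nontrivial in~$G$.

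For~the~sufficiency directions in~Statements~2--4 I~would build the~required $\mathcal{C}$-quotients explicitly. In~each case one chooses a~suitable $\rho(\mathcal{C})$-number~$d$, maps $b$ onto~a~generator of~$\mathbb{Z}/d$, and~extends the~assignment over~the~HNN structure by~letting $a$ act through the~automorphism of~$\mathbb{Z}/d$ prescribed by~$b^{m}\mapsto b^{n}$. The~point is~that this automorphism must itself have order a~$\rho(\mathcal{C})$-number so~that the~resulting semidirect or~central extension by~$\langle a\rangle$ embeds in~a~$\mathcal{C}$-group. For~Statement~2 the~element $b^{m}$ is~central and~the~automorphism is~trivial, so~it~suffices to~take $d$ divisible by~$m$ and~let $a$ act trivially on~$\langle b^{m}\rangle$, which is~possible exactly when $m$ is~a~$\rho(\mathcal{C})$-number; for~Statement~3 the~automorphism is~an~inversion of~order~$2$, so~the~same construction works but~additionally forces $2\in\rho(\mathcal{C})$. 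For~Statement~4 I~would instead use the~metabelian description $\mathrm{BS}(1,n)\cong\mathbb{Z}[1/n]\rtimes\mathbb{Z}$, separate a~nonzero element of~$\mathbb{Z}[1/n]$ by~reducing modulo a~prime $p\nmid n$, and~note that $a$ then acts as~multiplication by~$n$, whose order modulo~$p$ must be~a~$\rho(\mathcal{C})$-number for~the~action to~live in~$\mathcal{C}$. Combining finitely many such quotients with~the~free cyclic quotient $G/\langle\langle b\rangle\rangle\cong\mathbb{Z}$, which detects powers of~$a$, yields the~separation of~every nontrivial element.

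The~converse inclusions in~Statements~2--4 follow by~feeding the~order constraint $\gcd(d,m)=\gcd(d,n)$ back into~these constructions: if~the~stated number-theoretic condition fails, one exhibits a~fixed nontrivial element (a~power of~$b$, or~the~commutator $[a,b]$, shown nontrivial by~Britton's lemma) whose image is~trivial under~every admissible~$d$, because no~$\rho(\mathcal{C})$-number~$d$ can simultaneously satisfy the~order constraint and~keep that element alive.

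I~expect Statement~1 to~be~the~main obstacle. When $1<m<|n|$ the~associated subgroups $\langle b^{m}\rangle$ and~$\langle b^{n}\rangle$ have different indices in~$\langle b\rangle$, and~iterating the~defining relation yields $b^{n}=a^{-1}b^{m}a$ and, at~the~next level, elements satisfying $b^{m^{2}}\sim b^{n^{2}}$, producing an~ascending pattern that cannot close up inside any single finite cyclic group. The~task is~to~convert this incompatibility into~one explicit commutator lying in~the~normal closure $\langle\langle b\rangle\rangle$ that is~nontrivial in~$G$ by~Britton's lemma yet is~killed by~every homomorphism onto~a~periodic group. The~delicate part is~that the~image of~$\langle\langle b\rangle\rangle$ in~a~periodic quotient need not be~abelian — precisely when the~order constraint permits $\gcd(d,m)>1$ — so~the~witness must be~chosen to~exploit that constraint rather than mere commutativity; verifying that this one element lies in~the~kernel of~all $\mathcal{C}$-quotients simultaneously is~where the~argument requires the~most care.
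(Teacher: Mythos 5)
Your necessity arguments (the constraint $\gcd(d,m)=\gcd(d,n)$ on the order $d$ of the image of $b$ in any periodic quotient) are sound, and your treatment of Statement~4 is viable because $\mathrm{BS}(1,n)\cong\mathbb{Z}[1/n]\rtimes\mathbb{Z}$ really is metabelian. The decisive gap is in the sufficiency half of Statements~2 and~3. Every separating quotient you construct is of one of two kinds: either the image of $b$ generates a cyclic normal subgroup $\mathbb{Z}/d$ on which $a$ acts by the prescribed automorphism, or the quotient factors through $G/\langle\langle b\rangle\rangle\cong\mathbb{Z}$. In both kinds the image of the normal closure $\langle\langle b\rangle\rangle$ is abelian. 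But for $m>1$ the groups $\mathrm{BS}(m,\pm m)$ are not approximable by such images: the element $x=[b,\,a^{-1}ba]$ is nontrivial by Britton's lemma (it has a reduced form of length~$4$, since $b\notin\langle b^{m}\rangle$ when $m>1$), yet $b$ and $a^{-1}ba$ both lie in $\langle\langle b\rangle\rangle$, so $x$ is killed by every quotient of the first kind and trivially by the second. Hence your claim that ``combining finitely many such quotients yields the separation of every nontrivial element'' fails already for~$x$, for every admissible choice of $\rho(\mathcal{C})$-numbers~$d$. A correct proof needs a structural ingredient your proposal never touches: the kernel of $\mathrm{BS}(m,m)\to\mathbb{Z}_{m}$, $b\mapsto 1$, $a\mapsto 0$, is $F\times\mathbb{Z}$ with $F$ free, so $\mathrm{BS}(m,m)$ is $(F\times\mathbb{Z})$-by-$\mathbb{Z}_{m}$ and $\mathrm{BS}(m,-m)$ is $((F\times\mathbb{Z})$-by-$\mathbb{Z}_{m})$-by-$\mathbb{Z}_{2}$; one then invokes Gruenberg-type results (Proposition~\ref{proposition204} of the paper): free groups are residually $\mathcal{C}$-groups for any root class, and an extension of a residually $\mathcal{C}$-group by a $\mathcal{C}$-group is residually a $\mathcal{C}$-group. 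This is exactly where closure of $\mathcal{C}$ under Cartesian products is consumed; the fact that your argument never uses that hypothesis is a symptom of the problem, since cyclic-by-cyclic images alone can never certify residual properties of a group containing nonabelian free subgroups inside $\langle\langle b\rangle\rangle$.

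The second gap is Statement~1, which you acknowledge you have not proved: you describe the intended shape of the argument (a single element of $\langle\langle b\rangle\rangle$, nontrivial by Britton's lemma, killed in every periodic quotient) but exhibit neither the witness nor the verification, and this is the historically central, hardest part of the theorem. For calibration: the paper itself does not prove Theorem~\ref{theorem01} either; it quotes it from~\cite{Tumanova2017} and adds only Proposition~\ref{proposition203}, which removes the quotient-closure hypothesis by sandwiching $\mathcal{C}$ between the root class of finite solvable $\rho(\mathcal{C})$-groups and that of periodic $\rho(\mathcal{C})$-groups of finite exponent. So your proposal must stand on its own as a proof of the cited theorem, and in its present form it does not: Statements~2 and~3 rest on a separation claim that is false, and Statement~1 is missing entirely.
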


We note that, in~fact, Theorem~\ref{theorem01} is~true for~an~arbitrary root class consisting of~periodic groups (see Proposition~\ref{proposition203} below).

\begin{theorem}\label{theorem02}
\textup{\cite{Moldavanskii2020}}
$\mathrm{BS}(m,n)$ is~residually nilpotent if~and~only if~either $m = 1$ and~$n \ne 2$, or~$m > 1$ and~$n = \varepsilon m$ for~some $\varepsilon = \pm 1$.
\end{theorem}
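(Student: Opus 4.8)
The plan is to use throughout that a finitely generated group $G$ is residually nilpotent precisely when $\bigcap_{c\ge 1}\gamma_{c}(G)=1$, where $\gamma_{c}(G)$ denotes the lower central series, since the quotients $G/\gamma_{c}(G)$ are exactly the nilpotent images that matter. I would first settle the case $m=1$, where $G=\mathrm{BS}(1,n)$ is metabelian: the normal closure $B=\langle\langle b\rangle\rangle$ is abelian and isomorphic to $\mathbb{Z}[1/n]$, with $a$ acting on it as multiplication by $n$. A direct induction, based on $[b,a]=b^{\,n-1}$, then yields $\gamma_{c+1}(G)=(n-1)^{c}B$ for all $c\ge 1$, whence $\bigcap_{c}\gamma_{c}(G)=\bigcap_{c}(n-1)^{c}\mathbb{Z}[1/n]$. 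Since $\gcd(n-1,n)=1$, this intersection is trivial exactly when $|n-1|\neq 1$, that is, when $n\neq 2$ (recall $n\neq 0$); for $n=2$ one has $\gamma_{2}(G)=B\neq 1$ and the series stabilises. This disposes of the first alternative.

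For $m>1$ I would treat the two regimes $n=\varepsilon m$ and $|n|>m$ separately. In the positive regime $n=\varepsilon m$, the element $z=b^{m}$ generates a normal infinite cyclic subgroup on which the stable letter acts either trivially (when $\varepsilon=1$, so $z$ is central) or by inversion (when $\varepsilon=-1$); in both cases the action of $\langle a\rangle$ on $\langle z\rangle$ factors through a finite group, mirroring the harmless multiplier $\pm 1$ from the $m=1$ analysis. I would prove $\bigcap_{c}\gamma_{c}(G)=1$ by exhibiting, for each $1\neq g\in G$, a nilpotent quotient in which $g$ survives: elements outside $B$ are already detected by the abelianisation, while for the remaining elements one uses the finiteness of the $\langle a\rangle$-action on $\langle z\rangle$ to build suitable nilpotent quotients. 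Verifying that these quotients really separate every nontrivial element is the delicate point of this regime.

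It remains to show that $G$ is not residually nilpotent when $1<m<|n|$, i.e. that $\gamma_{\omega}(G)=\bigcap_{c}\gamma_{c}(G)\neq 1$. I would combine two ingredients. First, a conjugacy lemma: in any nilpotent quotient the images of $b^{m}$ and $b^{n}$ are conjugate and hence of equal order, which forces any surviving image of $b$ to be a torsion element whose order $d$ satisfies $\gcd(d,m)=\gcd(d,|n|)$, bounding its $p$-part at every prime with $v_{p}(m)\neq v_{p}(|n|)$. Second, the identity $[b^{m},a]=b^{\,n-m}$ places $b^{\,n-m}$ in $\gamma_{2}(G)$, and iterating it feeds nontrivial powers of $b$ into the higher terms of the series. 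The main obstacle, and the technical heart of the whole proof, is the exact computation of the intersections $\langle b\rangle\cap\gamma_{c}(G)$: for $m>1$ the base $B=\langle\langle b\rangle\rangle$ is no longer abelian but the infinite amalgam of infinite cyclic groups $\langle b_{i}\rangle$ subject to $b_{i+1}^{\,m}=b_{i}^{\,n}$, so conjugates of $b$ need not lie in $\langle b\rangle$ and the commutator bookkeeping becomes genuinely nonabelian. The order lemma alone does not suffice, since powers of $b$ can be killed in nilpotent quotients for reasons of lower-central depth rather than order; a finer prime-by-prime (mod-$p$ lower central) analysis is therefore needed to prove that a fixed nonzero power of $b$ persists in every $\gamma_{c}(G)$ when $|n|>m$, while no such power survives when $|n|=m$. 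This dichotomy is exactly what matches the stated criterion.
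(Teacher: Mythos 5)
The paper itself contains no proof of Theorem~\ref{theorem02}: it is imported verbatim from~\cite{Moldavanskii2020}, so your attempt can only be measured against the paper's surrounding machinery. Measured that way, your first regime is fine: for $m=1$ the normal closure $B$ of~$b$ is indeed $\mathbb{Z}[1/n]$ with $a$ acting as multiplication by~$n$, the induction gives $\gamma_{c+1}(G)=(n-1)^{c}B$, and $\bigcap_{c}(n-1)^{c}\mathbb{Z}[1/n]=0$ exactly when $|n-1|\ne 1$. The serious problem lies in your second regime, and it is not the ``delicate point'' you concede but an impossibility: the statement you are trying to prove is \emph{false} for $m>1$ unless $m$ is a prime power, so the separating family of nilpotent quotients you plan to build cannot exist in general. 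Concretely, $\mathrm{BS}(6,6)$ is a counterexample. Within the paper: $\mathrm{BS}(m,m)$ ($m>1$) is a non-solvable GBS-group with reduced graph and $\operatorname{Im}\Delta=\{1\}$, so Statement~1 of Theorem~\ref{theorem05} together with Statement~2 of Theorem~\ref{theorem01} shows it is residually nilpotent if and only if $m$ is a $p$-number for some prime~$p$, which $6$ is not. Directly: the element $w=\bigl[[a,b^{2}],[a,b^{3}]\bigr]$ of $\mathrm{BS}(6,6)$ is non-trivial by Britton's lemma (none of $b^{\pm 2},b^{\pm 3},b$ lies in $\langle b^{6}\rangle$), yet under any homomorphism $\psi$ onto a finite $q$-group, $\psi(b)^{2}$ is a power of $\psi(b)^{6}$ when $q\ne 3$, and $\psi(b)^{3}$ is a power of $\psi(b)^{6}$ when $q=3$; since $\psi(b^{6})$ commutes with $\psi(a)$, either $\psi([a,b^{2}])=1$ or $\psi([a,b^{3}])=1$, hence $\psi(w)=1$ always, and Proposition~\ref{proposition205} then rules out residual nilpotence. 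So the quotation in Theorem~\ref{theorem02} must be read with the additional hypothesis that $m$ is a power of a prime $p$; as printed it contradicts Theorems~\ref{theorem01} and~\ref{theorem05}.

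Once that hypothesis is restored, the second regime has a short proof quite different from your sketch, and your ``finite action of $\langle a\rangle$ on $\langle z\rangle$'' heuristic is precisely where the hypothesis gets lost: $\langle b^{m}\rangle$ is normal, $G/\langle b^{m}\rangle\cong\mathbb{Z}*\mathbb{Z}_{m}$ is residually a finite $p$-group when $m=p^{k}$ (Gruenberg's free-product theorem, cf.~\cite{Gruenberg1957}), and non-trivial elements of $\langle b^{m}\rangle$ survive either in the abelianization $\mathbb{Z}^{2}$ (case $n=m$) or under the homomorphism onto $\mathrm{BS}(1,-1)$ sending $a\mapsto a$, $b\mapsto b$ (case $n=-m$), the image being residually a finite $2$-group by Theorem~\ref{theorem01}. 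Your third regime is also left incomplete, but there none of the anticipated ``prime-by-prime lower central analysis'' is needed: a finitely generated residually nilpotent group is residually of prime power order (Proposition~\ref{proposition205}), and Statement~1 of Theorem~\ref{theorem01} says $\mathrm{BS}(m,n)$ with $1<m<|n|$ is not residually a $\mathcal{C}$-group for any root class $\mathcal{C}$ of periodic groups; these two facts dispose of that case immediately.
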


Now we turn to~generalized Baumslag--Solitar groups. For~a~detailed description of~the fundamental groups of~graphs of~groups and~GBS-groups, we refer the~reader to~Sections~\ref{section03} and~\ref{section04}. Here we just recall that each GBS-group can be~defined by~a~\emph{graph with~labels} (which are non-zero integers associated with~the~edges of~the~graph). This graph is~called \emph{reduced} if~each edge that is~not~a~loop has labels different from~$\pm 1$. It~is~easy to~show that any GBS-group can be~defined by~a~reduced labeled graph (see Section~\ref{section04} for~details).

A~GBS group is~called \emph{elementary} if~it~is~isomorphic to~$\mathbb{Z}$, $\mathrm{BS}(1,1) \cong \mathbb{Z} \times \mathbb{Z}$ or~$\mathrm{BS}(1,-1)$ \cite[p.~6]{Levitt2015}. It~is~known that a~GBS\nobreakdash-group is~solvable if~it~is~elementary or~isomorphic to~$\mathrm{BS}(1,q)$, where $q \ne \pm 1$~\cite{DelgadoRobinsonTimm2014}.

Until the~end of~this section, let $\Gamma$ be~a~non-empty finite connected graph, $\mathcal{L}(\Gamma)$ be~a~labeled graph over~$\Gamma$, $G$ be~the~GBS-group defined by~$\mathcal{L}(\Gamma)$, and~$\Delta\colon G \to \mathbb{Q}^{*}$ be~the~\emph{modular homomorphism} of~$G$ (defined if~$G$ is~not~elementary; see Section~\ref{section04}). The~theorems and~corollaries formulated below are the~main results of~this paper.

\begin{theorem}\label{theorem03}
Let $\mathcal{C}$ be~a~root class of~groups consisting only of~periodic groups and~$\rho(\mathcal{C})$ be~the~set of~primes defined above. Suppose also that $G$ is~not~solvable and~$\mathcal{L}(\Gamma)$ is~reduced.

\textup{1.}\hspace{1ex}If $\operatorname{Im}\Delta = \{1\}$, then $G$ is~residually a~$\mathcal{C}$\nobreakdash-group if~and~only if~all the~labels of~$\mathcal{L}(\Gamma)$ are $\rho(\mathcal{C})$\nobreakdash-num\-bers.

\textup{2.}\hspace{1ex}If $\operatorname{Im}\Delta = \{1,-1\}$, then $G$ is~residually a~$\mathcal{C}$\nobreakdash-group if~and~only if~all the~labels of~$\mathcal{L}(\Gamma)$ are $\rho(\mathcal{C})$\nobreakdash-num\-bers and~$2 \in \rho(\mathcal{C})$.

\textup{3.}\hspace{1ex}If $\operatorname{Im}\Delta \not\subseteq \{1,-1\}$, then $G$ is~not~residually a~$\mathcal{C}$\nobreakdash-group.
\end{theorem}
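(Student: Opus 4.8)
The plan is to treat the three statements through the interplay between the modular homomorphism, the Baumslag--Solitar subgroups carried by the edges of~$\Gamma$, and Theorem~\ref{theorem01}. Throughout I use two facts: that being residually a~$\mathcal{C}$\nobreakdash-group is inherited by subgroups (restrict the separating homomorphisms), and the description of~$G$ as the fundamental group of a graph of infinite cyclic groups together with the properties of~$\Delta$ developed in Sections~\ref{section03} and~\ref{section04}. The first step is to attach to each label an embedded Baumslag--Solitar subgroup. A loop at a vertex~$v$ with labels $(\alpha,\beta)$ gives $\langle a_{v},t_{e}\rangle\cong\mathrm{BS}(\alpha,\beta)$ directly, and $\Delta(t_{e})=\beta/\alpha$; hence $\operatorname{Im}\Delta\subseteq\{1,-1\}$ forces $\beta=\pm\alpha$ on every loop. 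For a non-loop edge with labels $(\alpha,\beta)$ the common value $a_{u}^{\alpha}=a_{v}^{\beta}$ is central in the two-generator subgroup it spans, so one extracts copies of $\mathrm{BS}(\alpha,\alpha)$ and $\mathrm{BS}(\beta,\beta)$; this is the delicate point for bridges, where no stable letter is present and the subgroup must be built around the central element. Finally, an element~$g$ with $|\Delta(g)|\neq 1$ yields, using the non-solvability of~$G$ and the reducedness of~$\mathcal{L}(\Gamma)$, a subgroup $\mathrm{BS}(m,n)$ with $1<m<|n|$.

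These embeddings settle statement~3 and the forward halves of~1 and~2 at once. If $\operatorname{Im}\Delta\not\subseteq\{1,-1\}$, the subgroup $\mathrm{BS}(m,n)$ with $1<m<|n|$ is not residually a~$\mathcal{C}$\nobreakdash-group by part~1 of Theorem~\ref{theorem01}, so~$G$ is not either. If~$G$ is residually a~$\mathcal{C}$\nobreakdash-group under the hypotheses of~1 or~2, then so is every $\mathrm{BS}(\ell,\varepsilon\ell)$ carrying a label~$\ell$, whence parts~2 and~3 of Theorem~\ref{theorem01} make every~$\ell$ a~$\rho(\mathcal{C})$\nobreakdash-number; and in the situation of~2 some closed path realizes $\Delta=-1$, producing a $\mathrm{BS}(\ell,-\ell)$ and, by part~3 of Theorem~\ref{theorem01}, the extra requirement $2\in\rho(\mathcal{C})$.

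For the converse implications I assume all labels are $\rho(\mathcal{C})$\nobreakdash-numbers (and $2\in\rho(\mathcal{C})$ in case~2) and separate an arbitrary non-unit~$g$. The idea is to map~$G$ onto the fundamental group~$Q$ of a finite graph of finite cyclic $\mathcal{C}$\nobreakdash-groups: I replace each vertex group $\langle a_{v}\rangle$ by a finite cyclic $\rho(\mathcal{C})$\nobreakdash-quotient $\mathbb{Z}_{N_{v}}$, choosing the moduli~$N_{v}$ and the generators compatibly so that every relation $a_{u}^{\alpha}=a_{v}^{\beta}$ survives and each stable letter acts by the prescribed rule, the inversion forced by a minus sign being realizable exactly because $2\in\rho(\mathcal{C})$. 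Unimodularity $\operatorname{Im}\Delta\subseteq\{1,-1\}$ is what makes the holonomy around the cycles of~$\Gamma$ consistent, so that such a compatible system exists, and the $\rho(\mathcal{C})$\nobreakdash-number hypothesis on the labels keeps all the finite cyclic groups inside~$\mathcal{C}$. Choosing the~$N_{v}$ to be high powers of primes of~$\rho(\mathcal{C})$ ensures~$g$ is not killed: an elliptic~$g$ conjugate to some $a_{v}^{k}$ survives once $N_{v}\nmid k$, and a hyperbolic~$g$ survives because its reduced form is preserved. Since~$Q$ is a graph of finite $\mathcal{C}$\nobreakdash-groups, it is itself residually a~$\mathcal{C}$\nobreakdash-group by the criterion of Section~\ref{section03}, and composing a separating homomorphism of~$Q$ with $G\to Q$ separates~$g$ in a~$\mathcal{C}$\nobreakdash-group.

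I expect the main obstacle to be precisely this construction of a compatible system of finite cyclic quotients: reconciling, around every cycle of~$\Gamma$, the local choices of modulus and generator so that they glue to a genuine homomorphism of~$G$ whose image is a graph of finite $\mathcal{C}$\nobreakdash-groups, while simultaneously keeping the chosen element~$g$ alive --- especially when the reduced normal form of~$g$ traverses several stable letters and long paths, and when the order-two inversion coming from $\operatorname{Im}\Delta=\{1,-1\}$ must be accommodated. This is where the hypotheses $\operatorname{Im}\Delta\subseteq\{1,-1\}$ and ``all labels are $\rho(\mathcal{C})$\nobreakdash-numbers'' are used in full strength, and where the bulk of the bookkeeping with element orders against the labels will lie.
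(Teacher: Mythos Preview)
Your necessity argument for loops and Statement~3 matches the paper, but the step for non-loop edges has a genuine gap. From a non-loop edge with labels $(\alpha,\beta)$ you cannot in general ``extract a copy of $\mathrm{BS}(\alpha,\alpha)$'' inside the two-generator subgroup $\langle g_{e(1)},g_{e(-1)}\rangle$: that subgroup is the amalgam $P(\alpha,\beta)=\langle x,y;\ x^{\alpha}=y^{\beta}\rangle$, and any candidate stable letter~$t$ you choose there satisfies a power relation with~$x^{\alpha}$, so the obvious map $\mathrm{BS}(\alpha,\alpha)\to P(\alpha,\beta)$ is not injective. The paper handles this edge case differently: it shows directly (Proposition~\ref{proposition304}) that if $P(\alpha,\beta)$ with $|\alpha|,|\beta|>1$ is residually a~$\mathcal{C}$\nobreakdash-group then $\alpha,\beta$ are $\rho(\mathcal{C})$\nobreakdash-numbers, by exhibiting a commutator that dies under every $\mathcal{C}$\nobreakdash-quotient when a non-$\rho(\mathcal{C})$ prime divides a label. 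For the extra condition $2\in\rho(\mathcal{C})$ in case~2 the paper invokes Levitt's result (Proposition~\ref{proposition401}) that $-1\in\operatorname{Im}\Delta$ forces a $\mathrm{BS}(1,-1)$ subgroup, which is cleaner than tracking a particular closed path.

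For sufficiency your plan (quotient onto a graph of finite cyclic $\rho(\mathcal{C})$\nobreakdash-groups, then separate inside that virtually free quotient) is a legitimate strategy, but your appeal to ``the criterion of Section~\ref{section03}'' is not backed by anything stated there, and you still owe the verification that the finite top quotient of your~$Q$ lands in~$\mathcal{C}$. The paper takes a more structural and considerably shorter route: it builds one explicit homomorphism $\tau$ (Propositions~\ref{proposition501},~\ref{proposition502}) whose kernel is $F\times\mathbb{Z}$ with $F$ free, and whose image is $\mathbb{Z}_{\mu}$ (case~1) or $\mathbb{Z}_{\mu}$-by-$\mathbb{Z}_{2}$ (case~2), where $\mu\mid\prod\lambda(\varepsilon e)$ is a $\rho(\mathcal{C})$\nobreakdash-number. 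Then Proposition~\ref{proposition201} puts the finite solvable quotient into~$\mathcal{C}$, and Gruenberg's extension lemma (Proposition~\ref{proposition204}) finishes. This bypasses entirely the bookkeeping you anticipate with normal forms and compatible moduli around cycles: the single subgroup $K=\bigcap H_{\varepsilon e}$ and the integer~$\mu$ do all the work at once.
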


\begin{corollary}\label{corollary01}
The~following statements are equivalent.

\textup{1.}\hspace{1ex}$G$ is~residually finite.

\textup{2.}\hspace{1ex}$G$ is~residually finite solvable.

\textup{3.}\hspace{1ex}Either $G$ is~solvable, or~it~is~not~and~$\operatorname{Im}\Delta \subseteq \{1,-1\}$.
\end{corollary}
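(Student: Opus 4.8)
The plan is to establish the cycle of implications $(1) \Rightarrow (3) \Rightarrow (2) \Rightarrow (1)$, using Theorem~\ref{theorem03} applied to two specific root classes: the class $\mathcal{F}$ of all finite groups and the class $\mathcal{S}$ of all finite solvable groups. The implication $(2) \Rightarrow (1)$ is immediate, since a homomorphism of $G$ onto a finite solvable group is in particular a homomorphism onto a finite group, so residual finite solvability trivially forces residual finiteness.

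Before anything else I would record two observations that make Theorem~\ref{theorem03} applicable. First, both $\mathcal{F}$ and $\mathcal{S}$ are root classes consisting only of periodic groups and closed under taking quotients; for $\mathcal{S}$ this requires checking that an extension of a finite solvable group by a finite solvable group is again finite solvable and that the relevant Cartesian products, which are over a \emph{finite} index set since $\mathcal{S}$ contains only finite groups, remain finite solvable. Second, for every prime $p$ the cyclic group of order $p$ lies in both classes, so $\rho(\mathcal{F})$ and $\rho(\mathcal{S})$ both equal the set of all primes. Consequently every integer is a $\rho(\mathcal{C})$\nobreakdash-number and $2 \in \rho(\mathcal{C})$ for $\mathcal{C} \in \{\mathcal{F},\mathcal{S}\}$, so the numerical side conditions in parts~1 and~2 of Theorem~\ref{theorem03} hold automatically, and those parts reduce to the assertion that $G$ is residually a $\mathcal{C}$\nobreakdash-group precisely when $\operatorname{Im}\Delta \subseteq \{1,-1\}$. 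Because residual finiteness, residual finite solvability, and the image $\operatorname{Im}\Delta$ all depend only on $G$ and not on the chosen defining graph, I may freely replace $\mathcal{L}(\Gamma)$ by a reduced labeled graph defining the same $G$; and when $G$ is not solvable it is not elementary, so $\Delta$ is indeed defined and the condition in $(3)$ is well posed.

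For $(1) \Rightarrow (3)$ I would argue by contraposition: if $(3)$ fails, then $G$ is not solvable and $\operatorname{Im}\Delta \not\subseteq \{1,-1\}$, whence part~3 of Theorem~\ref{theorem03} with $\mathcal{C} = \mathcal{F}$ gives that $G$ is not residually finite. For $(3) \Rightarrow (2)$ I would split into two cases. If $G$ is not solvable and $\operatorname{Im}\Delta \subseteq \{1,-1\}$, then part~1 or part~2 of Theorem~\ref{theorem03} with $\mathcal{C} = \mathcal{S}$ yields at once that $G$ is residually a finite solvable group. If instead $G$ is solvable, Theorem~\ref{theorem03} does not apply and the case is handled directly: by the classification recalled in Section~\ref{section01}, every solvable GBS\nobreakdash-group is one of $\mathbb{Z}$, $\mathbb{Z}\times\mathbb{Z}$, $\mathrm{BS}(1,-1)$, or $\mathrm{BS}(1,q)$ with $q \ne \pm1$, each of which is finitely generated and metabelian and hence residually finite by the theorem of P.~Hall; and since a solvable group has only solvable finite quotients, residual finiteness here upgrades to residual finite solvability.

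The main obstacle is precisely the solvable case of $(3) \Rightarrow (2)$, which lies outside the hypotheses of Theorem~\ref{theorem03} and therefore requires the separate input that finitely generated metabelian groups are residually finite, combined with the elementary remark that finite quotients of solvable groups are solvable. (Alternatively, these groups can be dispatched through parts~2--4 of Theorem~\ref{theorem01} with $\mathcal{C} = \mathcal{S}$, the cyclic group $\mathbb{Z}$ being trivial.) The remaining steps are routine once the two root classes are fixed and their prime sets are computed.
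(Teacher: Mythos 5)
Your proposal is correct and follows essentially the same route as the paper: the cycle $1 \Rightarrow 3 \Rightarrow 2 \Rightarrow 1$, with Theorem~\ref{theorem03} applied to the root classes of finite groups and finite solvable groups (whose $\rho$-sets are all primes), and the solvable case of $3 \Rightarrow 2$ handled separately. The only deviation is that you dispatch the solvable case primarily via P.~Hall's theorem on finitely generated metabelian groups, whereas the paper uses parts~2--4 of Theorem~\ref{theorem01} with $\mathcal{C}$ the class of finite solvable groups --- the very alternative you note in your final parenthesis --- so the two arguments coincide in substance.
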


\begin{corollary}\label{corollary02}
Let $G$ be~not~solvable, $\mathcal{L}(\Gamma)$ be~reduced, and~$\rho$ be~a~non-empty set of~primes. The~following statements are equivalent.

\textup{1.}\hspace{1ex}$G$ is~residually a~finite $\rho$\nobreakdash-group.

\textup{2.}\hspace{1ex}$G$ is~residually a~finite solvable $\rho$\nobreakdash-group.

\textup{3.}\hspace{1ex}$G$ is~residually a~periodic $\rho$\nobreakdash-group of~finite exponent.

\textup{4.}\hspace{1ex}$G$ is~residually a~periodic solvable $\rho$\nobreakdash-group of~finite exponent.

\textup{5.}\hspace{1ex}$\operatorname{Im}\Delta \subseteq \{1,-1\}$, all the~labels of~$\mathcal{L}(\Gamma)$ are $\rho$\nobreakdash-num\-bers, and~if $-1 \in \operatorname{Im}\Delta$, then $2 \in \rho$.
\end{corollary}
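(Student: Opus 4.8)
The plan is to obtain all four equivalences from~Theorem~\ref{theorem03} by~applying it to~four suitably chosen root classes. Write $\mathcal{C}_{1}$, $\mathcal{C}_{2}$, $\mathcal{C}_{3}$, $\mathcal{C}_{4}$ for~the~classes of~all finite $\rho$\nobreakdash-groups, all finite solvable $\rho$\nobreakdash-groups, all periodic $\rho$\nobreakdash-groups of~finite exponent, and~all periodic solvable $\rho$\nobreakdash-groups of~finite exponent, respectively. Then Statements~1--4 assert precisely that $G$ is~residually a~$\mathcal{C}_{i}$\nobreakdash-group for~$i = 1,2,3,4$. Since the~standing hypotheses of~Corollary~\ref{corollary02} ($G$~not~solvable, $\mathcal{L}(\Gamma)$~reduced) are exactly those of~Theorem~\ref{theorem03}, it~will~suffice to~check that each $\mathcal{C}_{i}$ is~a~root class consisting only of~periodic groups, to~compute $\rho(\mathcal{C}_{i})$, and~to~observe that the~criterion Theorem~\ref{theorem03} supplies collapses to~Statement~5 in~each case.

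First I~would verify the~root-class axioms. Every $\mathcal{C}_{i}$ plainly consists of~periodic groups and~is closed under~subgroups. Closure under~extensions is~immediate for~$\mathcal{C}_{1}$ and~$\mathcal{C}_{2}$ from~multiplicativity of~orders; for~$\mathcal{C}_{3}$ and~$\mathcal{C}_{4}$ I~would note that if~$N$ and~$G/N$ have finite exponents $e_{N}$ and~$e_{Q}$, both $\rho$\nobreakdash-num\-bers, then $g^{e_{N}e_{Q}} = 1$ for~all $g \in G$, so~$G$ has finite exponent equal to~a~$\rho$\nobreakdash-num\-ber. For~a~Cartesian product $\prod_{y \in Y}X_{y}$ of~copies of~a~group $X$ of~finite exponent~$e$, the~exponent again divides~$e$; and~for~$\mathcal{C}_{1}$, $\mathcal{C}_{2}$ the~factor $Y \in \mathcal{C}_{i}$ is~finite, so~the~product is~finite as~well. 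Solvability passes to~subgroups and~extensions and~survives arbitrary direct products, since the~derived series of~$\prod_{y}X_{y}$ is~the~product of~the~derived series of~the~factors, so~the~derived length does not~grow; this covers $\mathcal{C}_{2}$ and~$\mathcal{C}_{4}$. Hence all four are root classes of~periodic groups (each containing, as~$\rho \ne \varnothing$, a~cyclic group of~prime order, so~none is~trivial). Because each $\mathcal{C}_{i}$ contains a~cyclic group of~order~$p$ for~every $p \in \rho$ and~consists entirely of~$\rho$\nobreakdash-groups, I~get $\rho(\mathcal{C}_{i}) = \rho$ for~all~$i$.

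Then I~would feed each $\mathcal{C}_{i}$ into~Theorem~\ref{theorem03} with~$\rho(\mathcal{C}_{i}) = \rho$. When $\operatorname{Im}\Delta = \{1\}$, part~1 makes residuality equivalent to~all labels being $\rho$\nobreakdash-num\-bers, which is~Statement~5 (its clause on~$2$ being vacuous, as~$-1 \notin \operatorname{Im}\Delta$). When $\operatorname{Im}\Delta = \{1,-1\}$, part~2 makes it~equivalent to~all labels being $\rho$\nobreakdash-num\-bers together with~$2 \in \rho$, which is~again Statement~5 (now $-1 \in \operatorname{Im}\Delta$ activates the~clause $2 \in \rho$). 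When $\operatorname{Im}\Delta \not\subseteq \{1,-1\}$, part~3 rules out residuality, and~Statement~5 fails too because it~requires $\operatorname{Im}\Delta \subseteq \{1,-1\}$. Thus each of~Statements~1--4 is~equivalent to~the~single Statement~5, whence all five statements are equivalent to~one another.

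I~do not~expect a~genuine obstacle here, since the~real work has already been carried out in~Theorem~\ref{theorem03}; the~argument is~essentially organized bookkeeping. The~only points that require attention are the~closure of~the~finite-exponent classes under~extensions (the~bound $g^{e_{N}e_{Q}} = 1$) and~under~infinite Cartesian products, and~the~stability of~solvability under~arbitrary direct products — after which matching the~three regimes of~$\operatorname{Im}\Delta$ to~the~one uniform Statement~5 is~routine.
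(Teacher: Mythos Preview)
Your proposal is correct and follows essentially the same approach as the paper: verify that the four classes are root classes of periodic groups with $\rho(\mathcal{C}_i)=\rho$, then read off the equivalences from Theorem~\ref{theorem03}. The paper organizes the implications slightly more economically (using $2\Rightarrow 1$ and $4\Rightarrow 3$ as obvious containments rather than invoking Theorem~\ref{theorem03} for all four directions), but this is a cosmetic difference.
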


\begin{theorem}\label{theorem04}
{\parfillskip=0pt
Let $\mathcal{C}$ be~a~root class of~groups containing at~least one non-pe\-ri\-od\-ic group.\par}

\textup{1.}\hspace{1ex}If $G$ is~elementary, then it~is~a~tor\-sion-free $\mathcal{C}$\nobreakdash-group.

\textup{2.}\hspace{1ex}Let $G$ be~not~elementary and~$Q$ be~the~subring of~$\mathbb{Q}$ generated by~$\operatorname{Im}\Delta$. If~the~additive group of~$Q$ belongs to~$\mathcal{C}$, then $G$ is~residually a~tor\-sion-free $\mathcal{C}$\nobreakdash-group. In~particular, if~$\operatorname{Im}\Delta \subseteq \{1,-1\}$ or~$\mathcal{C}$ is~closed under~taking quotient groups, then $G$ is~residually a~tor\-sion-free $\mathcal{C}$\nobreakdash-group.
\end{theorem}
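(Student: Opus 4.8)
The plan is to handle the two numbered parts separately, reducing the second to the single assertion that additive $Q \in \mathcal{C}$ forces $G$ to be residually a torsion-free $\mathcal{C}$-group. For the first part I would observe that each elementary GBS-group --- $\mathbb{Z}$, $\mathbb{Z} \times \mathbb{Z} \cong \mathrm{BS}(1,1)$, and the Klein-bottle group $\mathrm{BS}(1,-1)$ --- is torsion-free and is an iterated extension of $\mathbb{Z}$ by $\mathbb{Z}$; since a non-periodic $\mathcal{C}$-group contains an infinite cyclic subgroup, $\mathbb{Z} \in \mathcal{C}$, and closure under extensions puts all three in $\mathcal{C}$. I would then dispose of the two ``in particular'' clauses by checking each forces additive $Q \in \mathcal{C}$: if $\operatorname{Im}\Delta \subseteq \{1,-1\}$ then $Q = \mathbb{Z}$; if $\mathcal{C}$ is closed under quotients, the Cartesian-product axiom gives $\prod_{\mathbb{Z}}\mathbb{Z} \in \mathcal{C}$, hence its subgroup $\bigoplus_{\mathbb{N}}\mathbb{Z} \in \mathcal{C}$, and every countable abelian group (in particular $Q$) is a quotient of $\bigoplus_{\mathbb{N}}\mathbb{Z}$.

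For the main assertion I would present $G$, via a spanning tree of $\Gamma$, as an iterated HNN extension of the base $T_{0}$ (the fundamental group of the spanning tree of infinite cyclic groups), with one stable letter $s_{e}$ per non-tree edge and infinite cyclic associated subgroups $\langle t_{u}^{p_{e}}\rangle$, $\langle t_{v}^{q_{e}}\rangle$; here $\Delta|_{T_{0}} = 1$ and $\Delta(s_{e}) = q_{e}/p_{e}$, so $\operatorname{Im}\Delta$ is generated by the loop ratios and $Q = \mathbb{Z}[\operatorname{Im}\Delta]$. The goal is then to construct, for each non-unit $g \in G$, a homomorphism onto a torsion-free $\mathcal{C}$-group that is non-trivial on $g$.

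To produce these I would use the structure of $K = \ker\Delta$, an (infinite) graph of infinite cyclic groups. For a single non-tree edge one computes directly that $K^{\mathrm{ab}} \cong$ additive $Q$, so that the metabelianization $G/[K,K] \cong Q \rtimes \operatorname{Im}\Delta$ is already a torsion-free $\mathcal{C}$-group detecting everything outside $[K,K]$. In general I would separate a prescribed element by mapping $G$ into an affine group $F \rtimes H$, where $H \cong \mathbb{Z}^{k}$ is free abelian on the stable letters (surjecting onto $\operatorname{Im}\Delta$ by $s_{e} \mapsto q_{e}/p_{e}$) and $F$ is a free $\mathbb{Z}[H]$-module on countably many copies of additive $Q$. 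Such an $F$ lies in $\mathcal{C}$: the Cartesian-product axiom gives $\prod$ additive $Q \in \mathcal{C}$, and $F$ embeds in this product, so $F \in \mathcal{C}$ by closure under subgroups; then $F \rtimes H$ is torsion-free (both factors are, and the torsion-free $H$ carries the action) and lies in $\mathcal{C}$ as an extension of $F$ by $H$. The vertex generators are sent into $F$ and the stable letters onto the generators of $H$, the $Q$-module structure being exactly what lets the relation $s_{e} t_{u}^{p_{e}} s_{e}^{-1} = t_{v}^{q_{e}}$ (which forces divisibility by $q_{e}/p_{e}$) be satisfied.

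I expect the main obstacle to be proving that such an affine map can be arranged to be non-trivial on a given $g$ lying deep in $[K,K]$ --- equivalently, that the (infinitely generated) base embeds into the free module compatibly with all edge identifications at once. This is a Magnus-type injectivity statement that I would establish through the Britton normal form for the HNN/graph-of-groups decomposition, tracking how $g$ distributes across the vertex coordinates of $F$; the simultaneity of the divisibility constraints across all edges, together with the requirement that the target stay both torsion-free and inside $\mathcal{C}$, is the technical heart, and the hypothesis additive $Q \in \mathcal{C}$ is precisely what keeps the modules $F$ within the class. A secondary delicate point is a possible $-1 \in \operatorname{Im}\Delta$: this torsion in the acting group must not introduce torsion into $F \rtimes H$, which is why I let the torsion-free group $H$ act through $\operatorname{Im}\Delta$ rather than using $\operatorname{Im}\Delta$ itself.
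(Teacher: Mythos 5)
Your Part~1 and your reduction of the two ``in particular'' clauses to the hypothesis $Q^{+} \in \mathcal{C}$ are correct and coincide with the paper's argument. The core of Part~2, however, rests on an approach that cannot be completed. Every target you propose --- $G/[K,K] \cong Q \rtimes \operatorname{Im}\Delta$, or the affine group $F \rtimes H$ with $F$ a module and $H \cong \mathbb{Z}^{k}$ --- is \emph{metabelian}, since both $F$ and $H$ are abelian. Any homomorphism from $G$ into a metabelian group kills the second derived subgroup $G''$. Because $\operatorname{Im}\Delta$ is abelian, $G' \leqslant K = \ker\Delta$, hence $G'' \leqslant [K,K]$; and whenever $G$ is non-solvable (e.g.\ $\mathrm{BS}(2,3)$, which is certainly covered by Statement~2), $G$ is not metabelian, so $G'' \ne 1$. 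Thus the elements ``lying deep in $[K,K]$'' that you flag as the main obstacle are not an obstacle to be overcome by a cleverer injectivity argument: they are non-trivial elements that \emph{no} homomorphism to any of your targets can ever detect. The Magnus embedding is not available here either --- it embeds quotients of the form $X/[N,N]$ into module extensions, not $X$ itself, so ``Britton normal form plus tracking coordinates'' can at best separate elements modulo $[K,K]$, never inside it.

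The paper resolves exactly this point differently, and the difference is the whole proof. It uses a \emph{single} homomorphism $\sigma\colon G \to X = Q^{+} \rtimes A$ (essentially your affine map, with $A$ free abelian on $\{a_{q} \mid q \in \operatorname{Im}\Delta\}$, so your remark about letting a torsion-free group act through $\operatorname{Im}\Delta$ is also the paper's device), observes that $\sigma$ is injective on every vertex group $G_{v}$, and then invokes the Bass--Serre fact (Proposition~\ref{proposition302}) that a normal subgroup of a fundamental group of a finite graph of groups meeting every vertex group trivially is \emph{free}. Hence $G$ is an extension of a free group by the torsion-free $\mathcal{C}$\nobreakdash-group $\operatorname{Im}\sigma$. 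Separating elements deep in $\ker\sigma$ is then delegated entirely to the root-class machinery: the class of torsion-free $\mathcal{C}$\nobreakdash-groups is again a root class (intersection of root classes), free groups are residually in any root class (Proposition~\ref{proposition204}, Statement~1), and Gruenberg's lemma (Proposition~\ref{proposition204}, Statement~3) shows that an extension of a residually-$\mathcal{C}'$ group by a $\mathcal{C}'$\nobreakdash-group is residually a $\mathcal{C}'$\nobreakdash-group. Without this step --- recognizing the kernel as free and using the closure properties of root classes instead of explicit metabelian quotients --- the proof cannot go through.
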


\begin{corollary}\label{corollary03}
An~arbitrary GBS\nobreakdash-group is~residually a~tor\-sion-free solvable group.
\end{corollary}

The~largest cyclic normal subgroup of~$G$ is~called the~\emph{cyclic radical} of~this group and~is denoted by~$C(G)$. The~cyclic radical exists if~$G$ is~not~isomorphic to~$\mathrm{BS}(1,1)$ or~$\mathrm{BS}(1,-1)$~\cite[p.~1808]{DelgadoRobinsonTimm2017}.

\begin{theorem}\label{theorem05}
Let $G$ be~not~solvable and~$\mathcal{L}(\Gamma)$ be~reduced.

\textup{1.}\hspace{1ex}If $\operatorname{Im}\Delta = \{1\}$, then $G$ is~residually nilpotent if~and~only if~it~is~residually a~finite $p$\nobreakdash-group for~some prime number~$p$.

\textup{2.}\hspace{1ex}If $\operatorname{Im}\Delta = \{1,-1\}$, then the~following statements are equivalent:

a\textup{)}\hspace{1ex}$G$ is~residually nilpotent;

b\textup{)}\hspace{1ex}$G$ is~residually a~finite nilpotent $\{2,p\}$\nobreakdash-group for~some prime number~$p$ \textup{(}which can be~equal to~$2$\textup{)};

c\textup{)}\hspace{1ex}all the~labels of~$\mathcal{L}(\Gamma)$ are $p$\nobreakdash-num\-bers for~some prime number~$p$, and~if $p \ne 2$, then every elliptic element that is~conjugate to~its inverse belongs to~$C(G)$.

\textup{3.}\hspace{1ex}If $\operatorname{Im}\Delta \not\subseteq \{1,-1\}$, then $G$ is~not~residually nilpotent.
\end{theorem}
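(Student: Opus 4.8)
The plan is to translate residual nilpotence into a statement about finite $p$\nobreakdash-quotients, to which Theorem~\ref{theorem03} applies, and then to fight for the uniformity of a single prime; that uniformity is where the real work lies.

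First I would record the two standing facts that a GBS\nobreakdash-group is finitely generated and torsion-free, and then reduce to finite nilpotent images. For a finitely generated group, residual nilpotence is equivalent to residual finite nilpotence: any non-unit $g$ survives in a finitely generated nilpotent quotient $N$, such an $N$ is residually finite by Hirsch's theorem, and every finite quotient of $N$ is again nilpotent. Since a finite nilpotent group is the direct product of its Sylow subgroups, $g$ then survives in a finite $p$\nobreakdash-quotient for at least one prime $p$. Writing $R_{p}$ for the kernel of the natural map of $G$ onto its pro\nobreakdash-$p$ completion, this shows that $G$ is residually nilpotent if and only if $\bigcap_{p}R_{p}=1$, and it is in this form that I would work.

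Next I would install the bridge to Theorem~\ref{theorem03}. The class of finite $p$\nobreakdash-groups is a root class of periodic groups with $\rho=\{p\}$, so Theorem~\ref{theorem03} says that $R_{p}=1$ exactly when all labels of $\mathcal{L}(\Gamma)$ are $p$\nobreakdash-num\-bers (if $\operatorname{Im}\Delta=\{1\}$), respectively when in addition $p=2$ (if $\operatorname{Im}\Delta=\{1,-1\}$). This yields all the \emph{easy} implications at once. In Part~1, if every label is a power of one prime $p$, then $R_{p}=1$, so $G$ is residually a finite $p$\nobreakdash-group, hence residually nilpotent; this is the ``if'' half, and in Part~2 it becomes (b)$\Rightarrow$(a) since finite nilpotent groups are nilpotent. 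Part~3 I would also dispatch by an inheritance argument: residual nilpotence passes to subgroups, so it suffices to locate inside $G$ a non-residually-nilpotent subgroup. When $\operatorname{Im}\Delta\not\subseteq\{1,-1\}$ some $\Delta(g)=n/m\neq\pm1$ produces, together with an elliptic element, a subgroup isomorphic to a Baumslag--Solitar group $\mathrm{BS}(m,n)$ with $1<m<|n|$, which is not residually nilpotent by Theorem~\ref{theorem02}; in the degenerate situation where only ascending loops are present one instead falls back on the intersection-of-kernels construction described below, and it is precisely here that non-solvability and reducedness are used.

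The heart of the theorem is the converse in Parts~1 and~2: residual nilpotence forces all labels to be powers of a single prime. I would argue by contraposition, and I expect this to be the main obstacle. Modelled on $G=\mathrm{BS}(\ell,\ell)$ with $\ell=p^{a}m$, $\gcd(p,m)=1$, $m>1$, where $b^{\ell}$ and $b^{p^{a}}$ generate the same subgroup of any finite $p$\nobreakdash-group and hence $[a,b^{p^{a}}]\in R_{p}$ while $[a,b^{p^{a}}]\neq1$ in $G$, I would, for two distinct label-primes $p\neq q$, manufacture non-central elements $u\in R_{p}$ and $v\in R_{q}$ with $[u,v]\neq1$ and conclude that $[u,v]$ is a non-unit element of $\bigcap_{p}R_{p}$. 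The delicate point is showing $[u,v]\in R_{r}$ for \emph{every} prime $r$: for a single loop this is automatic because the remaining pro\nobreakdash-$r$ images are abelian, but on a general reduced graph the primes $r$ not dividing the labels give non-abelian pro\nobreakdash-$r$ images, so one must choose $u$ and $v$ so that for each $r$ at least one of them is killed pro\nobreakdash-$r$-ly, and then verify $[u,v]\neq1$ through the Bass--Serre normal form for fundamental groups of graphs of groups (Section~\ref{section03}). Establishing that these local obstructions can always be combined along a connected $\Gamma$ is the technical core.

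Finally, for $\operatorname{Im}\Delta=\{1,-1\}$ I would pass to the index-two subgroup $G_{0}=\ker\Delta$, itself a GBS\nobreakdash-group with $\operatorname{Im}\Delta|_{G_{0}}=\{1\}$, in order to import the single-prime conclusion from Part~1, while $G/G_{0}\cong\mathbb{Z}_{2}$ forces the prime $2$ exactly as in Theorem~\ref{theorem03}.2. The condition in (c) on elliptic elements conjugate to their inverses then becomes transparent: if $p\neq2$ and an elliptic $e$ satisfies $h^{-1}eh=e^{-1}$ with $e\notin C(G)$, then in any finite nilpotent $p$\nobreakdash-quotient conjugation acts trivially modulo $p$ on $\langle\bar e\rangle$, so $\bar e=\bar e^{-1}$ forces $\bar e=1$; such an $e$ therefore lies in $\bigcap_{p}R_{p}$ and obstructs residual nilpotence unless it is absorbed into the cyclic radical $C(G)$, where it is separated by an abelian, hence nilpotent, quotient. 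Conversely, when (c) holds I would combine the residual $p$\nobreakdash-ness of $G_{0}$ with the order-two actions available inside finite $2$\nobreakdash-groups to obtain residual finite nilpotent $\{2,p\}$\nobreakdash-separation, giving (c)$\Rightarrow$(b) and closing the circle of equivalences.
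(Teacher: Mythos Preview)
Your overall architecture is sound, and Parts~1 and~3, as well as the easy implication (b)$\Rightarrow$(a), go through essentially as you describe. But in Part~2 there are two genuine gaps.

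\textbf{The implication (a)$\Rightarrow$(c).} Your claim that an elliptic $e$ with $h^{-1}eh=e^{-1}$ and $e\notin C(G)$ lies in $\bigcap_{q}R_{q}$ is not justified: the argument ``$\bar e=\bar e^{-1}$ forces $\bar e=1$'' works only for \emph{odd} primes, so you get at best $e\in\bigcap_{q\ne 2}R_{q}$. Nothing prevents $e$ from surviving in a finite $2$-quotient, and in general it will. The paper repairs this by constructing a two-layer witness: with $a$ playing the role of your~$e$, it locates an edge with $|\lambda(\pm e)|\ne 1$ and $a\notin H_{\varepsilon e}$, sets $x_{1}=[g_{e(\delta)},g_{e(-\delta)}]$ (or the HNN analogue if the edge is not in the tree), and takes $x_{2}=[x_{1},a]$. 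Then $x_{2}\ne 1$ by a normal-form length count; $a$ dies in every odd-prime quotient by your argument, while $x_{1}$ dies in every $2$-quotient because the labels are powers of the odd prime~$p$, so $g_{e(\pm 1)}$ already falls into $H_{\pm e}$ there. Hence $x_{2}$ dies in \emph{all} prime-power quotients, which is what is needed.

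\textbf{The implication (c)$\Rightarrow$(b).} Passing to $G_{0}=\ker\Delta$ and invoking Part~1 does not close the circle: even if $G_{0}$ is residually a finite $p$-group and you intersect a finite-index $p$-kernel with its conjugate to make it $G$-invariant, the resulting extension of a finite $p$-group by $\mathbb{Z}_{2}$ need not be nilpotent (think of $S_{3}$). So you do not get finite nilpotent $\{2,p\}$-quotients this way. The paper avoids the extension problem by a different splitting: Proposition~\ref{proposition602} shows directly that, under~(c) with $p\ne 2$, the quotient $G/C(G)$ is residually a finite $p$-group (this is precisely where the elliptic-element hypothesis is used, to build a homomorphism $G\to\mathbb{Z}_{\mu}$ whose kernel meets each $G_{v}$ in $C(G)$). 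Then $g\notin C(G)$ is separated by a finite $p$-group, while $g\in C(G)$ is separated by composing the map of Proposition~\ref{proposition501} with a surjection onto $\mathrm{BS}(1,-1)$, which is residually a finite $2$-group. Each target is already a $p$-group or a $2$-group, hence nilpotent, and no extension argument is needed.

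On the ``single prime'' step you flag as the technical core (Proposition~\ref{proposition701} in the paper): the paper does not rely on your $[u,v]$ scheme alone. When every edge has $|\lambda(+e)|\cdot|\lambda(-e)|>4$ it invokes Proposition~\ref{proposition303}, a general $p'$-isolation criterion for locally residually nilpotent graphs of groups, which immediately yields a single~$p$. Only the residual case (some edge has labels $\pm 2,\pm 2$ while another label has an odd prime divisor) is handled by an explicit commutator built along a simple chain, in the spirit of your sketch. Your plan would have to reproduce the content of Proposition~\ref{proposition303}, which is a substantial external input.
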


The~definition of~elliptic element can be~found in~Section~\ref{section04}. We note that if~$\mathcal{L}(\Gamma)$ is~reduced, all its labels are $p$\nobreakdash-num\-bers for~some prime number $p \ne 2$, and~$\operatorname{Im}\Delta = \{1,-1\}$, then there is~an~algorithm that checks whether every elliptic element that is~conjugate to~its inverse belongs to~$C(G)$; this algorithm is~given at~the~end of~Section~\ref{section06}.

\begin{theorem}\label{theorem06}
Let $G$ be~not~cyclic. The~following statements are equivalent.

\textup{1.}\hspace{1ex}$G$ is~residually tor\-sion-free nilpotent.

\textup{2.}\hspace{1ex}$G$ is~residually free.

\textup{3.}\hspace{1ex}$G$ is~isomorphic to~the~direct product of~a~free group and~an~infinite cyclic group.
\end{theorem}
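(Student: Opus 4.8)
The plan is to prove the cycle of implications $3 \Rightarrow 1 \Rightarrow 2 \Rightarrow 3$, with $3 \Rightarrow 1$ being the construction that provides the residual structure and $2 \Rightarrow 3$ being the rigidity argument that forces the special form. I begin with $3 \Rightarrow 1$: if $G \cong F \times \mathbb{Z}$ with $F$ free, then since free groups are residually torsion-free nilpotent (a classical result of Magnus, via the lower central series of the free group and the fact that the associated graded is a free Lie ring, so $\bigcap_n \gamma_n(F) = 1$) and $\mathbb{Z}$ is itself torsion-free nilpotent, the direct product is residually torsion-free nilpotent. The key point is that the class of torsion-free nilpotent groups, while not a root class (it is not closed under extensions), is closed under finite direct products, so $F \times \mathbb{Z}$ inherits the property from its factors by projecting each nontrivial element to a factor where it survives.

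Next, $1 \Rightarrow 2$ is immediate from the inclusion of classes: every torsion-free nilpotent group is, by Magnus's theorem again or by Mal'cev's embedding of torsion-free nilpotent groups into free nilpotent and thence into free groups, residually free; more directly one uses that a finitely generated torsion-free nilpotent group embeds in a free group's pro-structure, but the cleanest route is to observe that residually torsion-free nilpotent implies residually (residually free), hence residually free. Alternatively I would invoke the standard fact that finitely generated torsion-free nilpotent groups are residually free (they embed in unitriangular integer matrix groups, which are residually free), so a residually torsion-free nilpotent group is residually free by composing separating homomorphisms.

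The substantial implication is $2 \Rightarrow 3$, and this is where I expect the main obstacle. The strategy is to exploit that residually free groups are severely constrained: they are torsion-free and commutative-transitive, and in particular every two-generator subgroup is either free or abelian. Since $G$ is a GBS-group, I would first reduce to a reduced labeled graph and use the modular homomorphism $\Delta$. If $\operatorname{Im}\Delta \not\subseteq \{1,-1\}$ or more generally if $G$ is not residually torsion-free nilpotent, one derives a contradiction with commutative transitivity or with residual freeness of $\mathrm{BS}(m,n)$-type subgroups: any edge with a label of absolute value $> 1$ produces a Baumslag--Solitar subgroup $\mathrm{BS}(m,n)$ with $|n| > m \geqslant 1$ that is not residually free (it fails to be commutative-transitive when $m,n$ are coprime and $>1$, and contains non-free non-abelian two-generator subgroups). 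Thus all labels must be $\pm 1$, forcing every edge group to map isomorphically, which together with connectedness pushes $G$ toward a fundamental group of a graph of groups with trivial edge monodromy.

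The core of $2 \Rightarrow 3$ is then to show that residual freeness forces $\operatorname{Im}\Delta = \{1\}$ and that the group splits as $F \times \mathbb{Z}$. I would argue that the cyclic radical $C(G)$, being a cyclic normal subgroup, must be central (since in a residually free, hence commutative-transitive, group the centralizer of any nontrivial element is abelian, and a normal cyclic subgroup in such a group is central), giving a central $\mathbb{Z}$; quotienting by $C(G)$ yields a group acting on a tree with trivial edge stabilizers, i.e.\ a free group $F$, and the central extension $1 \to \mathbb{Z} \to G \to F \to 1$ splits because $F$ is free, producing $G \cong F \times \mathbb{Z}$. The hard part will be controlling the modular homomorphism and the loop edges carefully enough to guarantee centrality of $C(G)$ and triviality of the extension's cocycle; in particular I must rule out, using commutative transitivity, all configurations where a label $-1$ on a loop or a nontrivial $\operatorname{Im}\Delta$ would create a Klein-bottle-type or Baumslag--Solitar obstruction to residual freeness, and I expect the delicate case analysis on loops versus non-loop edges of the reduced graph to be the principal technical burden.
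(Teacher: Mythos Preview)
Your cycle $3 \Rightarrow 1 \Rightarrow 2 \Rightarrow 3$ contains two genuine errors.

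First, the implication $1 \Rightarrow 2$ does not hold by the argument you give: torsion-free nilpotent groups are \emph{not} residually free in general. The Heisenberg group $H_{3}(\mathbb{Z})$ is torsion-free nilpotent, yet every homomorphism from $H_{3}(\mathbb{Z})$ to a free group kills the central commutator $c=[a,b]$ (if $c$ survived, its centraliser in the free image would be cyclic, forcing the images of $a$ and $b$ to commute, whence $c\mapsto 1$). Your claim that unitriangular integer matrix groups are residually free is therefore false, and ``residually torsion-free nilpotent $\Rightarrow$ residually free'' fails as a general principle.

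Second, your argument for $2 \Rightarrow 3$ rests on the assertion that residually free groups are commutative-transitive and that centralisers of nontrivial elements are abelian. This is a property of \emph{fully} residually free groups (limit groups), not of residually free groups: $F_{2}\times F_{2}$ is residually free but not commutative-transitive. So the mechanism you invoke to force centrality of $C(G)$ and to exclude labels of absolute value $>1$ is unavailable.

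The paper avoids both problems by running the cycle in the opposite direction, $1 \Rightarrow 3 \Rightarrow 2 \Rightarrow 1$. For $1 \Rightarrow 3$ it uses Proposition~\ref{proposition205}: a finitely generated residually torsion-free nilpotent group is residually a finite $p$-group for \emph{every} prime~$p$. Feeding this into Theorem~\ref{theorem03} (and Theorem~\ref{theorem01} for the solvable cases) forces $\operatorname{Im}\Delta=\{1\}$ and all labels to be $\pm 1$, so the reduced graph has a single vertex and $G\cong G_{v}\times F$. The step $3 \Rightarrow 2$ is the easy fact that a direct product of free groups is residually free, and $2 \Rightarrow 1$ is the correct classical direction: free groups are residually torsion-free nilpotent (Magnus), hence so is any residually free group.
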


Thus, Theorems~\ref{theorem01} and~\ref{theorem03} (in~combination with~Proposition~\ref{proposition203}) give a~criterion for~$G$ to~be residually a~$\mathcal{C}$\nobreakdash-group, where $\mathcal{C}$ is~a~root class of~groups consisting only of~periodic groups, while Theorems~\ref{theorem02} and~\ref{theorem05} do a~criterion for~the~residual nilpotence of~$G$ thereby answering~\cite[Question~4]{BardakovNeshchadim2020}. We note that Corollaries~\ref{corollary01},~\ref{corollary02}, and~\ref{corollary03} strengthen and~generalize Corollary~7.7 of~\cite{Levitt2015}, Theorem~1 of~\cite{Dudkin2020}, and~Corollary~3 of~\cite{Robinson2010} respectively. The~rest of~the~paper is~devoted to~the~proofs of~the~formulated statements.

\section{Some auxiliary statements}\label{section02}

Throughout this section, if~$\mathcal{C}$ is~a~class of~groups consisting only of~periodic groups, then $\rho(\mathcal{C})$ denotes the~set of~primes defined above.

\begin{proposition}\label{proposition201}
Let $\mathcal{C}$ be~a~class of~groups consisting only of~periodic groups and~closed under~taking subgroups and~extensions. Then any finite solvable $\rho(\mathcal{C})$\nobreakdash-group belongs to~$\mathcal{C}$.
\end{proposition}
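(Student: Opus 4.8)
The plan is to build an arbitrary finite solvable $\rho(\mathcal{C})$\nobreakdash-group out of groups of prime order lying in~$\mathcal{C}$, using only closure under~subgroups and~extensions.

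First I would show that $\mathbb{Z}_{p} \in \mathcal{C}$ for~every $p \in \rho(\mathcal{C})$. By~the~definition of~$\rho(\mathcal{C})$, there is~a~$\mathcal{C}$\nobreakdash-group~$X$ possessing an~element~$x$ whose order is~divisible by~$p$. Since every $\mathcal{C}$\nobreakdash-group is~periodic, the~cyclic subgroup $\langle x \rangle$ is~finite, and~it~belongs to~$\mathcal{C}$ because $\mathcal{C}$ is~closed under~taking subgroups. A~finite cyclic group whose order is~divisible by~$p$ contains a~(unique) subgroup of~order~$p$; applying subgroup closure once more yields $\mathbb{Z}_{p} \in \mathcal{C}$.

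Next I would reduce the~general case to~this one. Let~$G$ be~a~finite solvable $\rho(\mathcal{C})$\nobreakdash-group. Since $G$ is~finite, Cauchy's theorem shows that every prime dividing~$|G|$ is~the~order of~some element of~$G$ and~is~therefore a~$\rho(\mathcal{C})$\nobreakdash-num\-ber, i.~e.~belongs to~$\rho(\mathcal{C})$. Because $G$ is~finite and~solvable, it~admits a~composition series $1 = G_{0} \lhd G_{1} \lhd \dots \lhd G_{k} = G$ whose factors $G_{i}/G_{i-1}$ are cyclic of~prime order; each such prime divides~$|G|$ and~hence lies in~$\rho(\mathcal{C})$, so~every factor is~isomorphic to~some $\mathbb{Z}_{p}$ with~$p \in \rho(\mathcal{C})$ and~is~thus a~$\mathcal{C}$\nobreakdash-group by~the~previous step.

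Finally, I would assemble~$G$ from~these factors by~induction along the~series: $G_{1} \in \mathcal{C}$ as~a~factor, and~if $G_{i-1} \in \mathcal{C}$, then $G_{i}$ is~an~extension of~$G_{i-1}$ by~$G_{i}/G_{i-1} \in \mathcal{C}$, so~$G_{i} \in \mathcal{C}$ by~closure under~extensions; after $k$ steps we~obtain $G = G_{k} \in \mathcal{C}$. I~do~not~expect a~serious obstacle here. The~only points requiring care are that the~$\mathcal{C}$\nobreakdash-group~$X$ furnished by~the~definition of~$\rho(\mathcal{C})$ may be~infinite, so~one must first descend to~the~finite cyclic subgroup $\langle x \rangle$ before extracting a~subgroup of~order~$p$, and~that extension closure, stated for~two groups at~a~time, propagates along the~finite composition series precisely by~the~displayed induction.
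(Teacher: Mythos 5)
Your proposal is correct and follows essentially the same route as the paper: both take a series with prime-order factors in a finite solvable $\rho(\mathcal{C})$\nobreakdash-group, realize each factor as a cyclic subgroup of a $\mathcal{C}$\nobreakdash-group via the definition of $\rho(\mathcal{C})$ and subgroup closure, and then assemble the group inductively by extension closure. The only cosmetic difference is that you first pass to $\langle x \rangle$ and then extract the subgroup of order $p$, whereas the paper picks an element of order $p$ directly; this changes nothing of substance.
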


\begin{proof}
Let $X$ be~a~finite solvable $\rho(\mathcal{C})$\nobreakdash-group. Then there~is a~polycyclic series~$\mathcal{S}$ in~$X$ such that the~orders of~all its factors belong to~$\rho(\mathcal{C})$. Let $p$ be~the~order of~some factor~$F$. By~the~definition of~$\rho(\mathcal{C})$, $p$~divides the~order of~an~element of~some $\mathcal{C}$\nobreakdash-group, and~so this group contains an~element, say~$x$, of~order~$p$. Then $F$ is~isomorphic to~the~cyclic subgroup~$\langle x \rangle$ generated by~$x$, and~since $\mathcal{C}$ is~closed under~taking subgroups,  $\langle x \rangle \in \mathcal{C}$. Thus, all the~factors of~$\mathcal{S}$ are $\mathcal{C}$\nobreakdash-groups, and~$X \in \mathcal{C}$ because $\mathcal{C}$ is~closed under~taking extensions.
\end{proof}

\begin{proposition}\label{proposition202}
\textup{\cite[Proposition~17]{SokolovTumanova2020IVM}}
Let $\mathcal{C}$ be~a~root class of~groups consisting only of~periodic groups. Then any $\mathcal{C}$\nobreakdash-group is~of~finite exponent.
\end{proposition}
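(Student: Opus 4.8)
The plan is to argue by contradiction: I would assume that some group $X \in \mathcal{C}$ fails to have finite exponent and then manufacture, inside $\mathcal{C}$, a non-periodic group, contradicting the hypothesis that $\mathcal{C}$ consists only of periodic groups. Among the closure properties of a root class, the only one I expect to do real work here is closure under Cartesian products $\prod_{y \in Y} X_{y}$ of isomorphic copies of a single group; closure under subgroups and extensions will not be needed. The crucial point is that this is the \emph{unrestricted} direct product, which is exactly what permits an element with infinitely many non-trivial coordinates to appear.

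First I would record what ``infinite exponent'' buys us. Since $X$ is periodic, every element has finite order; if the exponent is infinite, then the orders of elements are unbounded, so for each $n$ I can choose $a_{n} \in X$ with $\operatorname{ord}(a_{n}) \geqslant n$. In particular, elements of distinct order are distinct, so $X$ is infinite, and hence its underlying set contains a countable sequence $y_{1}, y_{2}, \dots$ of distinct indices.

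Next I would form the Cartesian power $P = \prod_{y \in X} X_{y}$ with $X_{y} \cong X$, taking the index group $Y = X$; since $X \in \mathcal{C}$, closure under Cartesian products gives $P \in \mathcal{C}$. Inside $P$ I would exhibit the element $f$ whose $y_{n}$-coordinate is $a_{n}$ (viewed in the copy $X_{y_{n}}$) for every $n$, and whose remaining coordinates are trivial. Because $f$ has infinitely many non-trivial coordinates, the equation $f^{m} = 1$ would force $\operatorname{ord}(a_{n}) \mid m$ for all $n$, which is impossible for any finite $m$ since the orders are unbounded. Hence $f$ has infinite order and $P$ is not periodic, contradicting the assumption on $\mathcal{C}$; so $X$ must have had finite exponent after all.

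The step I would watch most carefully is the construction of $f$, since the whole argument collapses if one inadvertently works in the restricted direct product (direct sum): there every element has finite support and therefore finite order. Thus the main thing to get right --- and the reason the statement holds at all --- is that the Cartesian products closing a root class are the full (unrestricted) ones, together with the observation that the index set $X$ is large enough to host a sequence of coordinates realizing unboundedly large orders.
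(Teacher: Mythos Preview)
The paper does not give its own proof of this proposition: it is simply quoted from \cite[Proposition~17]{SokolovTumanova2020IVM} and used as a black box. Your argument is correct and is the standard one. The only closure axiom doing work is indeed the Cartesian-product axiom, and your caution about unrestricted versus restricted products is exactly the point: in the restricted direct product every element has finite support and hence finite order, so the contradiction would evaporate. One minor stylistic remark: rather than arguing that ``elements of distinct order are distinct, so $X$ is infinite'', it is cleaner to note that a finite group automatically has finite exponent; either way the conclusion that $X$ (and hence the index set) is infinite is immediate.
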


\begin{proposition}\label{proposition203}
{\parfillskip=0pt
Theorem~\textup{\ref{theorem01}} is~valid for~any root class~$\mathcal{C}$ consisting of~periodic groups.\par}
\end{proposition}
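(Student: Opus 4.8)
The plan is to trap the given class~$\mathcal{C}$ between two root classes that are already closed under~taking quotient groups, and~to~which Theorem~\ref{theorem01} therefore applies verbatim, arranging that all three classes give rise to~the~same set of~primes~$\rho(\mathcal{C})$. Concretely, I~would let $\mathcal{C}_{0}$ be~the~class of~all finite solvable $\rho(\mathcal{C})$\nobreakdash-groups and~$\mathcal{C}_{1}$ be~the~class of~all $\rho(\mathcal{C})$\nobreakdash-groups of~finite exponent, and~prove the~inclusions $\mathcal{C}_{0} \subseteq \mathcal{C} \subseteq \mathcal{C}_{1}$. The~left inclusion is~exactly Proposition~\ref{proposition201}; the~right one holds because every $\mathcal{C}$\nobreakdash-group is~periodic, has~finite exponent by~Proposition~\ref{proposition202}, and~has all element orders equal to~$\rho(\mathcal{C})$\nobreakdash-num\-bers by~the~very definition of~$\rho(\mathcal{C})$.

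Next I~would verify that~$\mathcal{C}_{0}$ and~$\mathcal{C}_{1}$ are root classes consisting only of~periodic groups, are closed under~taking quotient groups, and~satisfy $\rho(\mathcal{C}_{0}) = \rho(\mathcal{C}_{1}) = \rho(\mathcal{C})$. For~$\mathcal{C}_{0}$ this is~routine: subgroups, quotients, finite direct products, and~extensions of~finite solvable $\rho(\mathcal{C})$\nobreakdash-groups are again of~this form, and~$\mathbb{Z}_{p} \in \mathcal{C}_{0}$ for~every $p \in \rho(\mathcal{C})$ forces $\rho(\mathcal{C}_{0}) = \rho(\mathcal{C})$. For~$\mathcal{C}_{1}$ closure under~subgroups, quotients, and~extensions is~again straightforward (an~extension of~a~group of~exponent~$e_{1}$ by~a~group of~exponent~$e_{2}$ has~exponent dividing~$e_{1}e_{2}$). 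The~one axiom that needs care --- and~the~point where Proposition~\ref{proposition202} is~essential --- is~closure under~the~Cartesian products $\prod_{y \in Y}X_{y}$: since $X$ has~\emph{finite} exponent~$e$, the~whole product again has~exponent dividing~$e$, so~it~remains periodic with~$\rho(\mathcal{C})$\nobreakdash-num\-ber element orders. Without~the~finite-exponent conclusion of~Proposition~\ref{proposition202} such a~product could fail to~be periodic, so~I~expect this to~be the~main obstacle to~the~whole scheme.

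With~the~sandwich $\mathcal{C}_{0} \subseteq \mathcal{C} \subseteq \mathcal{C}_{1}$ in~hand, the~four assertions follow by~applying Theorem~\ref{theorem01} to~the~two quotient-closed classes $\mathcal{C}_{0}$ and~$\mathcal{C}_{1}$ together with~the~obvious monotonicity of~residuality: being residually a~$\mathcal{C}_{0}$\nobreakdash-group implies being residually a~$\mathcal{C}$\nobreakdash-group, which in~turn implies being residually a~$\mathcal{C}_{1}$\nobreakdash-group. For~the~non-residuality statement (the~case $1 < m < |n|$) I~would invoke the~first assertion of~Theorem~\ref{theorem01} for~$\mathcal{C}_{1}$ to~see that $\mathrm{BS}(m,n)$ is~not~even residually a~$\mathcal{C}_{1}$\nobreakdash-group, whence it~is~not~residually a~$\mathcal{C}$\nobreakdash-group. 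For~each of~the~three ``if~and~only~if'' statements, the~necessity of~the~arithmetic condition on~$m$ (respectively~$n$) comes from~applying the~corresponding part of~Theorem~\ref{theorem01} to~$\mathcal{C}_{1}$, while its sufficiency comes from~applying the~same part to~$\mathcal{C}_{0}$; both applications refer to~the~same condition because $\rho(\mathcal{C}_{0}) = \rho(\mathcal{C}_{1}) = \rho(\mathcal{C})$. Thus no~new combinatorial work on~Baumslag--Solitar groups is~required: the~entire content of~the~proposition is~the~reduction to~the~quotient-closed case, and~the~only genuinely new verification is~that $\mathcal{C}_{1}$ is~a~root class.
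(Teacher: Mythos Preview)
Your proposal is correct and essentially identical to the paper's own proof: the paper sandwiches~$\mathcal{C}$ between the class of finite solvable $\rho(\mathcal{C})$\nobreakdash-groups and the class of periodic $\rho(\mathcal{C})$\nobreakdash-groups of finite exponent (using Propositions~\ref{proposition201} and~\ref{proposition202} for the two inclusions), notes that both are quotient-closed root classes with the same~$\rho$, and then transfers the necessary and sufficient conditions of Theorem~\ref{theorem01} via monotonicity of residuality exactly as you describe. The paper's write-up is terser (it simply asserts that the two auxiliary classes are root and quotient-closed rather than spelling out the Cartesian-product verification), but the strategy and the key inputs are the same.
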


\begin{proof}
Let $\mathcal{C}_{1}$ and~$\mathcal{C}_{2}$ denote the~class of~finite solvable $\rho(\mathcal{C})$\nobreakdash-groups and~the~class of~periodic $\rho(\mathcal{C})$\nobreakdash-groups of~finite exponent respectively. It~follows from~Propositions~\ref{proposition201} and~\ref{proposition202} that $\mathcal{C}_{1} \subseteq \mathcal{C} \subseteq \mathcal{C}_{2}$. One can easily verify that $\mathcal{C}_{1}$ and~$\mathcal{C}_{2}$ are root classes closed under~taking quotient groups. It~is~also obvious that $\rho(\mathcal{C}_{1}) = \rho(\mathcal{C}) = \rho(\mathcal{C}_{2})$. Therefore, if~$G$ is~residually a~$\mathcal{C}$\nobreakdash-group, then it~is~residually a~$\mathcal{C}_{2}$\nobreakdash-group and~satisfies the~necessary conditions from~Theorem~\ref{theorem01} (depending only on~$m$, $n$, and~$\rho(\mathcal{C}))$. In~the~same way, if~the~sufficient conditions from~Theorem~\ref{theorem01} are satisfied (which also depend only on~$m$, $n$, and~$\rho(\mathcal{C}))$, then $G$ is~residually a~$\mathcal{C}_{1}$\nobreakdash-group and~hence is~residually a~$\mathcal{C}$\nobreakdash-group.
\end{proof}

\begin{proposition}\label{proposition204}
Let $\mathcal{C}$ be~an~arbitrary root class of~groups. Then the~following statements are true.

\textup{1.}\hspace{1ex}Every free group is~residually a~$\mathcal{C}$\nobreakdash-group.

\textup{2.}\hspace{1ex}The~direct product of~any two residually $\mathcal{C}$\nobreakdash-groups is~residually a~$\mathcal{C}$\nobreakdash-group.

\textup{3.}\hspace{1ex}Any extension of~a~residually $\mathcal{C}$\nobreakdash-group by~a~$\mathcal{C}$\nobreakdash-group is~residually a~$\mathcal{C}$\nobreakdash-group.
\end{proposition}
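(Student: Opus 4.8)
The plan is to prove each of the three parts separately, since they concern different classes of constructions but all rely on the defining closure properties of a root class (closure under subgroups, extensions, and the specified Cartesian products). Throughout I would keep in mind that $\mathcal{C}$ contains at least one non-unit group, and that—as is standard and follows easily from the root-class axioms—$\mathcal{C}$ then contains a non-trivial finite cyclic group, hence all finite groups via the closure properties. I would also recall the elementary fact that a group is residually a $\mathcal{C}$-group if and only if the trivial subgroup is an intersection of normal subgroups each of which has $\mathcal{C}$-quotient; equivalently, for every $g \neq 1$ there is a normal subgroup $N$ with $g \notin N$ and $G/N \in \mathcal{C}$.

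**Part 1: free groups.** For a free group $F$, I would use that $F$ is residually (finite $p$-group) for any prime $p$, a classical theorem. Since $\mathcal{C}$ contains a non-unit group, it contains a non-trivial finite cyclic group and hence, by closure under subgroups and extensions, all finite $p$-groups for some prime $p$ dividing the order of an element of a $\mathcal{C}$-group. Then residual finite-$p$-ness of $F$ immediately gives residual $\mathcal{C}$-ness. Alternatively, and perhaps more cleanly, $F$ embeds in a direct/Cartesian product of finite groups separating its elements, and the Cartesian-product closure of the form $\prod_{y\in Y} X_y$ lets me assemble these into a single $\mathcal{C}$-group; but I expect the residual-$p$ route to be the shortest.

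**Part 2: direct products.** Given residually $\mathcal{C}$-groups $A$ and $B$ and a non-unit element $(a,b) \in A \times B$, at least one coordinate, say $a$, is non-trivial. I would pick a homomorphism $\sigma \colon A \to X \in \mathcal{C}$ with $a\sigma \neq 1$, compose with the projection $A \times B \to A$, and obtain a homomorphism $A \times B \to X$ separating $(a,b)$ from the identity. This is routine and the only subtlety is handling both coordinates symmetrically, which is immediate.

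**Part 3: extensions — the main obstacle.** This is the heart of the proposition. Let $1 \to A \to G \to Q \to 1$ with $A$ residually a $\mathcal{C}$-group and $Q \in \mathcal{C}$. Given $g \neq 1$, the easy case is $g \notin A$: then the quotient map $G \to Q$ already separates $g$. The hard case is $g \in A$, $g \neq 1$. Here I would choose a normal subgroup $N \trianglelefteq A$ with $g \notin N$ and $A/N \in \mathcal{C}$, but $N$ need not be normal in $G$. The standard fix is to replace $N$ by its \emph{core} $M = \bigcap_{t \in T} N^{t}$, where $t$ ranges over a transversal of $A$ in $G$ (equivalently over $Q$), so that $M \trianglelefteq G$ and still $g \notin M$. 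The difficulty is that $M$ is an intersection of $|Q|$ conjugates, and $A/M$ embeds into the Cartesian product $\prod_{t} A/N^{t}$; I must show this product lies in $\mathcal{C}$. Each factor $A/N^{t} \cong A/N \in \mathcal{C}$, and the index set has size $|Q|$ with $Q \in \mathcal{C}$, so this is exactly a Cartesian product of the form $\prod_{y \in Y} X_y$ with $X = A/N$ and $Y = Q$ both in $\mathcal{C}$—hence it is a $\mathcal{C}$-group by the root-class axiom, and $A/M \in \mathcal{C}$ by closure under subgroups. Finally $G/M$ is an extension of $A/M \in \mathcal{C}$ by $G/A = Q \in \mathcal{C}$, so $G/M \in \mathcal{C}$ by closure under extensions, and $g \notin M$ gives the required separation. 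The delicate point to get right is the matching of the conjugation action so that the index set $Y$ genuinely equals (a copy of) $Q$, which is precisely why the peculiar Cartesian-product closure in the definition of a root class is needed; this is the step I expect to require the most care.
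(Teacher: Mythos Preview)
Your arguments for Parts~2 and~3 are correct. Part~2 is exactly the direct verification the paper has in mind, and your Part~3 is precisely Gruenberg's core argument (Lemma~1.5 of~\cite{Gruenberg1957}), which is what the paper cites; you have simply written out what the reference contains.

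There is, however, a genuine gap in Part~1. Your opening claim that ``$\mathcal{C}$ then contains a non-trivial finite cyclic group, hence all finite groups'' is false on both counts. The paper itself lists the class of all torsion-free groups as an example of a root class, and that class contains no non-trivial finite group whatsoever; similarly, the class of finite $p$-groups is a root class containing $\mathbb{Z}/p$ but certainly not all finite groups. Consequently your proposed route for Part~1 --- reduce to residual $p$-finiteness for a prime $p$ dividing the order of some element of a $\mathcal{C}$-group --- breaks down entirely when every $\mathcal{C}$-group is torsion-free, since no such $p$ exists.

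The fix is to split into two cases. If some $\mathcal{C}$-group has an element of finite order, your argument goes through: $\mathcal{C}$ contains $\mathbb{Z}/p$ for some prime~$p$, hence (by extension closure) all finite $p$-groups, and free groups are residually finite-$p$. If instead every $\mathcal{C}$-group is torsion-free, then $\mathcal{C}$ contains an infinite cyclic group, and by iterated extension closure $\mathcal{C}$ contains every poly-(infinite cyclic) group, in particular every finitely generated torsion-free nilpotent group; since free groups are residually torsion-free nilpotent (Magnus), they are residually~$\mathcal{C}$. This dichotomy is what underlies the result the paper cites from~\cite{AzarovTieudjo2002}.
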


\begin{proof}
Statements~1 and~3 follow from~\cite[Theorem~1]{AzarovTieudjo2002} and~\cite[Lemma~1.5]{Gruenberg1957} respectively. Statement~2 is~verified directly.
\end{proof}

As~usual, by~a~\emph{group of~prime power order} we mean a~finite group whose order is~a~power of~some prime number.

\begin{proposition}\label{proposition205}
Let $X$ be~a~finitely generated group. If~$X$ is~residually nilpotent, then it~is~residually a~group of~prime power order. If~$X$ is~residually tor\-sion-free nilpotent, then it~is~residually a~finite $p$\nobreakdash-group for~every prime number~$p$.
\end{proposition}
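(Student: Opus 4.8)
The plan is to reduce both assertions, via the defining residual property, to two classical facts about finitely generated nilpotent groups. Fix an arbitrary non-unit element $g \in X$. For the first assertion, residual nilpotence of~$X$ supplies a homomorphism~$\phi$ of~$X$ onto a nilpotent group~$N$ with $g\phi \ne 1$. Since $X$ is finitely generated, so is its image~$N$, and every finitely generated nilpotent group is residually finite; as every quotient of a nilpotent group is again nilpotent, I would then obtain a homomorphism~$\psi$ of~$N$ onto a \emph{finite} nilpotent group~$F$ with $(g\phi)\psi \ne 1$. A finite nilpotent group is the direct product of its Sylow subgroups, $F = \prod_{p} F_{p}$; writing $(g\phi)\psi = (x_{p})_{p}$, at least one coordinate $x_{p}$ is non-trivial. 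Composing $\phi$, $\psi$, and the projection $F \to F_{p}$ yields a homomorphism of~$X$ onto the finite $p$-group~$F_{p}$ under which $g$ has non-trivial image. As $g$ was arbitrary, this shows $X$ is residually a group of prime power order.

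For the second assertion, I would fix in addition a prime number~$p$. Since $X$ is residually a torsion-free nilpotent group, there is a homomorphism~$\phi$ of~$X$ onto a torsion-free nilpotent group~$N$ with $g\phi \ne 1$, and again~$N$ is finitely generated. The decisive input here is the classical theorem of~Gruenberg~\cite{Gruenberg1957} that a finitely generated torsion-free nilpotent group is residually a finite $p$-group for \emph{every} prime~$p$. Applying it to~$N$ and~$g\phi$, I obtain a homomorphism of~$N$ onto a finite $p$-group keeping the image of~$g$ non-trivial; composing with~$\phi$ then shows that $g$ survives in a finite $p$-group. Since $g$ and~$p$ were arbitrary, $X$ is residually a finite $p$-group for every prime~$p$.

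The only non-formal ingredients are the two cited classical results --- residual finiteness of finitely generated nilpotent groups and Gruenberg's residual $p$-finiteness of finitely generated torsion-free nilpotent groups --- and finite generation is used precisely to guarantee that these residual properties are available for the quotients~$N$ produced above. Once that is secured, the passage from a nilpotent (respectively, torsion-free nilpotent) quotient to a finite $p$-quotient is routine manipulation of homomorphisms, so I expect no serious obstacle; the only point requiring care is that the torsion-free case supplies the conclusion uniformly for \emph{all} primes at once, which is exactly what the second assertion demands.
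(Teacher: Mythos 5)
Your proof is correct and follows essentially the same route as the paper: finite generation guarantees that the nilpotent (respectively, torsion-free nilpotent) quotients of~$X$ are themselves finitely generated, after which classical results about such groups finish the argument by composition of homomorphisms. The only cosmetic difference is that for the first assertion you unpack the relevant classical fact into residual finiteness of finitely generated nilpotent groups plus the Sylow decomposition of finite nilpotent groups, whereas the paper cites Gruenberg's Theorem~2.1 wholesale for both assertions.
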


\begin{proof}
Since $X$ is~finitely generated, it~is~residually a~finitely generated nilpotent group or~residually a~finitely generated tor\-sion-free nilpotent group. Therefore, the~required statement follows from~\cite[Theorem~2.1]{Gruenberg1957}.
\end{proof}

\begin{proposition}\label{proposition206}
Let $X$ be~a~group, $x$ and~$y$ be~elements of~$X$ such that $x^{-1}yx = y^{-1}$. Let also $p$ be~a~prime number and~$\psi$ be~a~homomorphism of~$X$ onto~a~finite $p$\nobreakdash-group. If~$p \ne 2$, then $y\psi = 1$.
\end{proposition}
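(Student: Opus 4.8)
The plan is to transport the relation into the finite $p$\nobreakdash-group and then exploit that $p$ is odd. Put $P = X\psi$, $a = x\psi$, and~$b = y\psi$; applying the homomorphism~$\psi$ to the hypothesis $x^{-1}yx = y^{-1}$ yields $a^{-1}ba = b^{-1}$ in~$P$. Since $P$ is a finite $p$\nobreakdash-group, every element of~$P$ (in particular $a$ and~$b$) has order a power of~$p$. The goal is to show that $b = 1$, which is exactly the assertion $y\psi = 1$.

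First I would record that conjugation by~$a$ inverts~$b$ and then iterate this. A routine induction on~$j$ gives $a^{-j}ba^{j} = b^{(-1)^{j}}$ for every integer $j \geqslant 0$: the case $j = 1$ is the relation above, and the inductive step conjugates once more by~$a$ and uses that conjugation commutes with inversion, so $a^{-(j+1)}ba^{j+1} = (a^{-1}ba)^{(-1)^{j}} = (b^{-1})^{(-1)^{j}} = b^{(-1)^{j+1}}$.

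The decisive step is to substitute $j = |a|$, the order of~$a$. Because $P$ is a finite $p$\nobreakdash-group with $p \ne 2$, the order $|a|$ is an odd power of~$p$, so $(-1)^{|a|} = -1$. On the one hand the left-hand side of the identity becomes $a^{-|a|}ba^{|a|} = b$, and on the other hand the right-hand side is $b^{-1}$. Hence $b = b^{-1}$, that is, $b^{2} = 1$. But $b$ has $p$\nobreakdash-power order, which is odd, so $b^{2} = 1$ forces the order of~$b$ to divide $\gcd(2, |b|) = 1$; therefore $b = 1$, as required.

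I do not expect a genuine obstacle here: the argument is a short computation once the hypothesis is pushed into~$P$. The only point demanding care is the parity observation that $|a|$ is odd, which is precisely where the assumption $p \ne 2$ is used; without it the statement is false, as the dihedral relation in a $2$\nobreakdash-group with $b$ of order~$2$ already shows.
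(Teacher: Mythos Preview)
Your proof is correct and takes a different route from the paper's. The paper works in~$X$ rather than in the image: it checks by induction that $y^{2^{i}} \in \gamma_{i+1}(X)$ for all $i \geqslant 0$, then uses that $X\psi$ is nilpotent to get $y^{2^{i}}\psi = 1$ for some~$i$, and finally the coprimality of $2^{i}$ with the order of~$y\psi$ (a~$p$\nobreakdash-power with $p \ne 2$) forces $y\psi = 1$. Your argument is more direct and avoids the lower central series entirely: pushing the relation into~$P$ and iterating conjugation up to the (odd) order of~$a$ immediately gives $b^{2} = 1$, hence $b = 1$. The paper's approach has the minor advantage of isolating a relation $y^{2^{i}} \in \gamma_{i+1}(X)$ that holds in~$X$ itself and would apply to any nilpotent quotient; your approach, in exchange, is shorter and uses nothing beyond the parity of element orders in an odd-order group.
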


\begin{proof}
Let $\gamma_{i}(X)$ denote the~$i$\nobreakdash-th member of~the~lower central series of~$X$, and~let $r$ be~the~order of~$y\psi$. It~is~easy to~verify that $y^{2^{i}} \in \gamma_{i+1}(X)$ for~every $i \geqslant 0$. Since $X\psi$ is~nilpotent, it~follows that $y^{2^{i}}\psi = 1$ for~some $i \geqslant 0$. If~$p \ne 2$, then $1 = (r,2^{i}) = r\alpha + 2^{i}\beta$ for~suitable integers $\alpha$,~$\beta$ and~hence $y\psi = (y\psi)^{r\alpha + 2^{i}\beta} = 1$ as~required.
\end{proof}

\section{The~fundamental group of~a~graph of~groups}\label{section03}

Let $\Gamma$ be~a~non-empty undirected graph with~a~vertex set~$V$ and~an~edge set~$E$ (loops and~multiple edges are allowed). To~turn $\Gamma$ into~a~\emph{graph of~groups}, we denote the~vertices of~$\Gamma$ that are the~ends of~an~edge $e \in E$ by~$e(1)$, $e(-1)$ and~assign to~each vertex $v \in V$ some group~$G_{v}$, to~each edge $e \in E$ a~group~$H_{e}$ and~injective homomorphisms $\varphi_{+e}\colon H_{e} \to G_{e(1)}$, $\varphi_{-e}\colon H_{e} \to G_{e(-1)}$. We denote the~resulting graph of~groups by~$\mathcal{G}(\Gamma)$, the~subgroups $H_{e}\varphi_{+e}$ and~$H_{e}\varphi_{-e}$ ($e \in E$) by~$H_{+e}$ and~$H_{-e}$. We also call $G_{v}$ ($v \in V$), $H_{e}$ ($e \in E$), and~$H_{\varepsilon e}$ ($e \in E$, $\varepsilon = \pm 1$) \emph{vertex groups}, \emph{edge groups}, and~\emph{edge subgroups} respectively. All designations introduced in~this paragraph are assumed to~be fixed until the~end of~the~section.

It should be~noted that an~edge~$e$ of~$\mathcal{G}(\Gamma)$ is~associated with~two different homomorphisms $\varphi_{+e}$, $\varphi_{-e}$ even in~the~case when $e$ is~a~loop, i.\;e.~$e(1) = e(-1)$. Therefore, we can consider $\mathcal{G}(\Gamma)$ as~a~directed graph assuming that $\varphi_{+e}$ corresponds to~the~origin while $\varphi_{-e}$ does to~the~terminus of~$e$.

Let $F$ be~a~maximal forest in~$\Gamma$ and~$E_{F}$ be~the~set of~edges of~$\Gamma$ that belong to~$F$. The~\emph{fundamental group} of~$\mathcal{G}(\Gamma)$ is~the~group~$\pi_{1}(\mathcal{G}(\Gamma))$ whose generators are the~generators of~$G_{v}$ ($v \in V$) and~symbols~$t_{e}$ ($e \in E \setminus E_{F}$) and~whose defining relations are the~relations of~$G_{v}$ ($v \in V$) and~all possible relations of~the~form
\begin{align*}
h_{e}^{\vphantom{1}}\varphi_{+e}^{\vphantom{1}} &= 
h_{e}^{\vphantom{1}}\varphi_{-e}^{\vphantom{1}} \quad 
(e \in E_{F}^{\vphantom{1}},\ 
h_{e}^{\vphantom{1}} \in H_{e}^{\vphantom{1}}),\\[-3pt]
t_{e}^{-1}(h_{e}^{\vphantom{1}}
\varphi_{+e}^{\vphantom{1}})
t_{e}^{\vphantom{1}} &= 
h_{e}^{\vphantom{1}}\varphi_{-e}^{\vphantom{1}} \quad 
(e \in E \setminus E_{F}^{\vphantom{1}},\ 
h_{e}^{\vphantom{1}} \in H_{e}^{\vphantom{1}}),
\end{align*}
where $h_e\varphi_{\varepsilon e}$ ($\varepsilon = \pm 1$) is~the~word in~the~generators of~$G_{e(\varepsilon)}$ defining the~image of~$h_e$ under~$\varphi_{\varepsilon e}$~\cite[\S~5.1]{Serre1980}.

Obviously, the~presentation of~$\pi_{1}(\mathcal{G}(\Gamma))$ depends on~the~choice of~$F$. It~is~known, however, that all the~groups with~the~presentations corresponding to~different maximal forests of~$\Gamma$ are isomorphic~\cite[\S~5.1]{Serre1980}. This allows us to~talk about the~fundamental group of~a~graph of~groups without mentioning a~specific maximal forest. It~is~also known that, for~each vertex $v \in V$, the~identity mapping of~the~generators of~$G_{v}$ to~$\pi_{1}(\mathcal{G}(\Gamma))$ defines an~injective homomorphism~\cite[\S~5.2]{Serre1980} and~so $G_{v}$ can be~considered as~a~subgroup of~$\pi_{1}(\mathcal{G}(\Gamma))$. This easily implies

\begin{proposition}\label{proposition301}
Let $\Gamma^{\prime}$ be~an~arbitrary connected subgraph of~$\Gamma$, $T^{\prime}$ be~a~maximal subtree of~$\Gamma^{\prime}$, and~$\mathcal{G}(\Gamma^{\prime})$ be~the~graph of~groups whose vertices and~edges correspond to~the~same groups and~homomorphisms as~in~$\mathcal{G}(\Gamma)$. Then there exists a~maximal forest~$F$ in~$\Gamma$ such that $F \cap \Gamma^{\prime} = T^{\prime}$. If~the~presentations of~$\pi_{1}(\mathcal{G}(\Gamma))$ and~$\pi_{1}(\mathcal{G}(\Gamma^{\prime}))$ correspond to~the~indicated forest~$F$ and~$T^{\prime}$, then the~identity mapping of~the~generators of~$\pi_{1}(\mathcal{G}(\Gamma^{\prime}))$ to~$\pi_{1}(\mathcal{G}(\Gamma))$ defines an~injective homomorphism.
\end{proposition}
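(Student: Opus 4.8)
The plan is to prove the two assertions in turn, treating the existence of $F$ as a purely graph-theoretic extension lemma and the injectivity by building $\Gamma$ up from $\Gamma'$ one edge at a time. For the first assertion, since $\Gamma'$ is connected and $T'$ is a maximal subtree, $T'$ meets every vertex of $\Gamma'$ and is an acyclic subgraph of $\Gamma$. Starting from $T'$, I would greedily adjoin edges of $\Gamma$ that create no cycle until none can be added; the result is a maximal forest $F$ of $\Gamma$ with $T' \subseteq F$. The point is that $F \cap \Gamma' = T'$ and nothing more: any edge of $\Gamma'$ outside $T'$ joins two vertices already linked by $T'$, so it would close a cycle and cannot lie in $F$. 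Hence $E_{F} \cap E(\Gamma') = E_{T'}$. The same observation shows that no edge of $E_{F} \setminus E_{T'}$ has both endpoints in $V(\Gamma')$, a fact I reuse below.

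For the homomorphism, once the presentations are taken relative to $F$ and $T'$ with $F \cap \Gamma' = T'$, the generators of $\pi_{1}(\mathcal{G}(\Gamma'))$ — the generators of the groups $G_{v}$ with $v \in V(\Gamma')$ together with the letters $t_{e}$ for $e \in E(\Gamma') \setminus E_{T'}$ — form a subset of the generators of $\pi_{1}(\mathcal{G}(\Gamma))$, because $E(\Gamma') \setminus E_{T'} = E(\Gamma') \setminus E_{F} \subseteq E \setminus E_{F}$. Likewise every defining relation of $\pi_{1}(\mathcal{G}(\Gamma'))$ is among the defining relations of $\pi_{1}(\mathcal{G}(\Gamma))$. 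Consequently the identity mapping of generators extends to a homomorphism $\iota\colon \pi_{1}(\mathcal{G}(\Gamma')) \to \pi_{1}(\mathcal{G}(\Gamma))$.

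To prove that $\iota$ is injective, I would interpolate a chain of subgraphs $\Gamma' = \Gamma_{0} \subseteq \Gamma_{1} \subseteq \cdots \subseteq \Gamma_{k} = \Gamma$, taking the presentation of each $\pi_{1}(\mathcal{G}(\Gamma_{i}))$ relative to the forest $F \cap \Gamma_{i}$. In a first phase I adjoin the vertices of $V \setminus V(\Gamma')$ and the tree edges $E_{F} \setminus E_{T'}$, in an order dictated by the forest structure so that each added tree edge $e$ attaches a single new vertex $w$ to the current subgraph; then $\pi_{1}(\mathcal{G}(\Gamma_{i+1}))$ is the free product of $\pi_{1}(\mathcal{G}(\Gamma_{i}))$ and $G_{w}$ amalgamated over the edge subgroups identified by $H_{e}$ (a new component of $F$ disjoint from $\Gamma'$ is begun by adjoining its first vertex as a free factor). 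In a second phase I adjoin the remaining edges $E \setminus (E(\Gamma') \cup E_{F})$; at this stage all vertices are present, so each such edge is a non-forest edge with both endpoints available, and $\pi_{1}(\mathcal{G}(\Gamma_{i+1}))$ is the HNN extension of $\pi_{1}(\mathcal{G}(\Gamma_{i}))$ with stable letter $t_{e}$ and associated subgroups $H_{+e}$, $H_{-e}$. Since a factor of an amalgamated free product and the base group of an HNN extension always embed, each inclusion $\pi_{1}(\mathcal{G}(\Gamma_{i})) \hookrightarrow \pi_{1}(\mathcal{G}(\Gamma_{i+1}))$ is injective, and as every step fixes the old generators, the composite of these embeddings is exactly $\iota$; hence $\iota$ is injective.

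The main obstacle lies in the last step: one must verify that at each stage the fundamental group of the larger graph of groups decomposes precisely as the stated amalgam or HNN extension of the fundamental group of the smaller one — in particular that the amalgamated and associated subgroups are exactly the edge subgroups $H_{\varepsilon e}$ — and that the embeddings at successive steps are compatible with the chosen presentations. This is where the classical structure theory of $\pi_{1}(\mathcal{G}(\Gamma))$, together with the already quoted fact that each vertex group embeds, does the work, which is why the proposition follows readily from the material recalled before its statement.
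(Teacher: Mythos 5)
Your proof is correct. The paper offers no argument for this proposition beyond ``This easily implies'' (from the quoted facts that vertex groups embed and that presentations over different forests give isomorphic groups), and your edge-by-edge induction --- extending $T^{\prime}$ greedily to a maximal forest, observing that the presentation of $\pi_{1}(\mathcal{G}(\Gamma^{\prime}))$ sits inside that of $\pi_{1}(\mathcal{G}(\Gamma))$, and then realizing each enlargement as an amalgamated free product or HNN extension whose factor, respectively base group, embeds by the normal form theorems --- is precisely the standard argument that remark presupposes, with the needed injectivity of the edge maps into the intermediate groups secured by the cited embedding of vertex groups.
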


The~next statement is~a~special case of~\cite[Proposition~13]{SokolovTumanova2020LJM}.

\begin{proposition}\label{proposition302}
Let $\Gamma$ be~finite and~$N$ be~a~normal subgroup of~$\pi_{1}(\mathcal{G}(\Gamma))$ that meets each subgroup~$G_{v}$ \textup{(}$v \in V$\textup{)} trivially. Then $N$ is~free.
\end{proposition}

As~usual, we say that a~group possesses some property \emph{locally} if~each of~its finitely generated subgroups possesses this property.

\begin{proposition}\label{proposition303}
\textup{\cite[Theorem~1]{Kuvaev2019}}
Let $\Gamma$ be~connected, every $G_{v}$ \textup{(}$v \in V$\textup{)} locally satisfy a~non-trivi\-al identity, and, for~each $e \in E$, $[G_{e(1)}:H_{+e}] \ne 1 \ne [G_{e(-1)}:H_{-e}]$, $[G_{e(1)}:H_{+e}] \cdot [G_{e(-1)}:H_{-e}] > 4$. If~$\pi_{1}(\mathcal{G}(\Gamma))$ is~locally residually nilpotent, then there exists a~prime number~$p$ such that, for~any $e \in E$, $\varepsilon = \pm 1$, $H_{\varepsilon e}$ is~$p^{\prime}$\nobreakdash-isolated in~$G_{e(\varepsilon)}$ \textup{(}i.\;e.,~for~each $g \in G_{e(\varepsilon)}$ and~for~each prime number~$q$, it~follows from~$g^{q} \in H_{\varepsilon e}$ and~$p \ne q$ that $g \in H_{\varepsilon e}$\textup{)}.
\end{proposition}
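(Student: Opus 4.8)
The plan is to localize the statement to individual edges, establish the isolation condition for a single amalgamated product or HNN extension, and then propagate one prime across the connected graph.

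First I would reduce to one edge at a time. Fix an edge~$e$ and let $\Gamma^{\prime}$ be~the~connected subgraph consisting of~$e$ together with~its endpoints. By~Proposition~\ref{proposition301} the~group $\pi_{1}(\mathcal{G}(\Gamma^{\prime}))$ embeds in~$\pi_{1}(\mathcal{G}(\Gamma))$; when $e$ is~not~a~loop it~is~the~amalgamated free product $A *_{C} B$ with~$A = G_{e(1)}$, $B = G_{e(-1)}$ and~$C$ the~common image of~the~edge group, and~when $e$ is~a~loop it~is~the~HNN extension $\langle G_{e(1)}, t_{e};\ t_{e}^{-1}H_{+e}t_{e} = H_{-e}\rangle$. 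Since local residual nilpotence passes to~subgroups, each construction is~locally residually nilpotent, each vertex group inside it~still locally satisfies a~non-trivial identity, and~the~index hypotheses $[A:C],[B:C] \geqslant 2$, $[A:C]\cdot[B:C] > 4$ are retained.

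Next I~would analyze a~single edge. Suppose isolation fails, say $g \in A$, $g^{q} \in C$ but~$g \notin C$ for~a~prime~$q$. Choosing $b \in B \setminus C$ and~passing to~the~finitely generated subgroup $K = \langle g, b\rangle$, I~would apply Proposition~\ref{proposition205} to~write $K$ as~residually a~group of~prime power order. In~any homomorphism $\psi$ of~$K$ onto~a~finite $p$\nobreakdash-group with~$p \ne q$, the~image $g\psi$ has~$p$\nobreakdash-power order, so~from $(g\psi)^{q} = (g^{q})\psi \in C\psi$ and~$(q,|g\psi|) = 1$ one gets $g\psi \in C\psi$; thus~$g$ cannot be~separated from~$C$ in~any such quotient. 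The~hypothesis that~$A$ and~$B$ locally satisfy an~identity forbids non-abelian free subgroups in~the~vertex groups, so~the~coset torsion recorded by~$g^{q} \in C$ is~the~only source of~non-nilpotent behaviour, while the~condition $[A:C]\cdot[B:C] > 4$ rules out the~degenerate $(2,2)$ situation in~which the~amalgam is~virtually abelian and~can be~residually nilpotent with~no~isolation (as~for~$\mathrm{BS}(1,-1)$). Together these force a~suitable nontrivial commutator into every term $\gamma_{i}(K)$ of~the~lower central series, contradicting residual nilpotence unless $q = p$. A~symmetric argument on~the~$B$\nobreakdash-side forces the~same prime; this is~exactly the~obstruction in~the~trefoil group $\langle a,b;\ a^{2} = b^{3}\rangle$, where the~two sides demand the~incompatible primes $2$ and~$3$ and~residual nilpotence indeed fails. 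I~would conclude that~each edge~$e$ admits a~single prime~$p_{e}$ for~which both $H_{+e}$ and~$H_{-e}$ are $p_{e}^{\prime}$\nobreakdash-isolated.

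Finally I~would glue. Because $\Gamma$ is~connected, any two edges are joined by~a~chain of~edges sharing vertex groups; applying the~same reduced-form analysis to~the~finitely generated subgroup spanning two adjacent edges shows their exceptional primes must coincide whenever isolation genuinely fails, so~one prime~$p$ serves the~whole graph (any prime if~isolation never fails). The~main obstacle is~the~single-edge step: converting local residual nilpotence into~the~quantitative assertion that~the~lower central series separates cosets of~$C$ at~a~unique prime. The~two delicate points are verifying that~the~index bound~$>4$ genuinely excludes the~$(2,2)$ case and~that~the~identity hypothesis is~strong enough to~suppress the~free subgroups which would otherwise let~$K$ be~residually\nobreakdash-$p$ for~several primes at~once.
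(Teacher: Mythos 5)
First, a point of reference: the paper does not prove this proposition at all --- it is imported verbatim from \cite[Theorem~1]{Kuvaev2019} --- so your attempt can only be measured against the technique the paper uses for the analogous separation arguments it does carry out (Propositions~\ref{proposition304} and~\ref{proposition701}). Measured that way, your skeleton (localize via Proposition~\ref{proposition301}, pass to finitely generated subgroups, invoke Proposition~\ref{proposition205}, contradict residual nilpotence via elements that die in every quotient of prime power order) is the right one, but the core of the proof is missing. The first problem is logical: a single isolation failure $g \notin C$, $g^{q} \in C$ is \emph{not} in tension with residual nilpotence --- it merely pins the exceptional prime to be~$q$ (indeed $P(2,2) \cong \mathrm{BS}(1,-1)$ is residually a finite $2$\nobreakdash-group and has exactly such failures at $q = 2$). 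A contradiction can only arise from two failures at \emph{distinct} primes $q_{1} \ne q_{2}$, either on the two sides of one edge (your trefoil example) or on two different edges of the graph. Your pivotal sentence ``contradicting residual nilpotence unless $q = p$'' refers to a prime~$p$ that has not been defined at that point; the two-prime dichotomy, which is the actual engine of the theorem, never appears in a usable form, and the cross-edge case is dismissed in one clause although it is where the dichotomy genuinely has to be run.

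The second problem is that the key construction is asserted rather than performed. To contradict Proposition~\ref{proposition205} you must exhibit a concrete element $x \ne 1$ of a finitely generated subgroup that every homomorphism onto a group of prime power order kills. Where the paper does this (Propositions~\ref{proposition304}, \ref{proposition701}), $x$ is an explicit commutator: its nontriviality is checked with the normal form theorem (this is exactly where the index conditions, including the bound $> 4$, are consumed), and its vanishing uses that the vertex groups are \emph{cyclic} --- once $g\psi$ is forced into $C\psi$, it automatically commutes with the image of the other vertex generator, so the commutator collapses. In the generality of Proposition~\ref{proposition303} the vertex groups are only assumed to satisfy an identity locally; they need not be abelian, so $g\psi \in C\psi$ does not kill $[g,b]\psi$, and the mechanism you gesture at (``force a suitable nontrivial commutator into every term $\gamma_{i}(K)$'') does not exist without substantive extra work --- this is precisely the part of Kuvaev's argument where the identity hypothesis must enter as more than a slogan about free subgroups. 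Finally, your explanation of the $(2,2)$ exclusion is incorrect as stated: such amalgams are not virtually abelian in general (take $C$ non-abelian), and $\mathrm{BS}(1,-1)$ is not a counterexample to the conclusion; the bound $[A:C]\cdot[B:C] > 4$ is needed to make the reduced forms of the constructed commutators long enough to certify $x \ne 1$.
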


Let $\Gamma$ consist of~two vertices and~an~edge~$e$ connecting them. Recall that in~this case $\pi_{1}(\mathcal{G}(\Gamma))$ is~said to~be the~\emph{free product of~$G_{e(1)}$ and~$G_{e(-1)}$ with~$H_{+e}$ and~$H_{-e}$ amalgamated}. The~groups $G_{e(1)}$ and~$G_{e(-1)}$ are called the~\emph{free factors} of~this free product (the~terminology used here and~below and~concerning free products with~amalgamated subgroups and~HNN-ex\-ten\-sions follows the~monographs~\cite{MagnusKarrasSolitar1974, LyndonSchupp1980}). The~presentation of~an~element $g \in \pi_{1}(\mathcal{G}(\Gamma))$ in~the~form $g = g_{1}\ldots g_{n}$, $n \geqslant 1$, is~said to~be~\emph{reduced} if~every multiplier $g_{i}$ belongs to~one of~the~groups $G_{e(1)}$, $G_{e(-1)}$ and~no~two neighboring multipliers $g_{i}$, $g_{i+1}$ lie simultaneously in~$G_{e(1)}$ or~$G_{e(-1)}$. The~number~$n$ is~called the~\emph{length} of~this reduced form. The~normal form theorem for~generalized free products (see, e.~g.~\cite[Theorem~4.4]{MagnusKarrasSolitar1974}) implies that if~an~element $g \in \pi_{1}(\mathcal{G}(\Gamma))$ has at~least one reduced form of~length greater than~$1$, then it~does not~belong to~any of~the~free factors $G_{e(1)}$, $G_{e(-1)}$ and,~in~particular, differs from~$1$.

If $\Gamma$ has only one vertex~$v$ and~at least one loop, then $\pi_{1}(\mathcal{G}(\Gamma))$ is~said to~be the~\emph{HNN-ex\-ten\-sion of~$G_{v}$ with~the~stable letters~$t_{e}$} ($e \in E$). The~group~$G_{v}$ is~called the~\emph{base group} of~this HNN-ex\-ten\-sion. In~this case, by~a~\emph{reduced form} of~an~element $g \in \pi_{1}(\mathcal{G}(\Gamma))$ we mean the~product $g = g_{0}^{\vphantom{\mbox{}_1}}t_{e_{i_1}}^{\varepsilon_1}
g_{1}^{\vphantom{\mbox{}_1}} \ldots 
t_{e_{i_n}}^{\varepsilon_n}g_{n}^{\vphantom{\mbox{}_1}}$, where $n \geqslant 0$, $g_{0}, g_{1}, \ldots, g_{n} \in G_{v}$, $e_{i_1}, \ldots, e_{i_n} \in E$, $\varepsilon_{1}, \ldots, \varepsilon_{n} \in \{1,-1\}$, and,~for~each $k \in \{1, \ldots, n-1\}$, if~$i_{k} = i_{k+1}$ and~$\varepsilon_{k} = - \varepsilon_{k+1}$, then $g_{k} \notin H_{-\varepsilon_{k}e_{i_k}}$. As~above, $n$ is~called the~\emph{length} of~this reduced form. It~is~known~\cite{Britton1963} that if~an~element $g \in \pi_{1}(\mathcal{G}(\Gamma))$ has at~least one reduced form of~length greater than~$0$, then it~does not~belong to~the~base group~$G_{v}$ and,~in~particular, differs from~$1$.

\begin{proposition}\label{proposition304}
Let $P(m,n) = \langle x, y;\ x^{m} = y^{n} \rangle$, $1 < |m|$,\,$|n|$, $\mathcal{C}$ be~an~arbitrary class of~groups consisting only of~periodic groups, and~$\rho(\mathcal{C})$ be~the~set of~primes defined in~Section~\textup{\ref{section01}}. If~$P(m,n)$ is~residually a~$\mathcal{C}$\nobreakdash-group, then $m$ and~$n$ are $\rho(\mathcal{C})$\nobreakdash-num\-bers.
\end{proposition}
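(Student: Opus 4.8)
The plan is to show that if $P(m,n)=\langle x,y;\ x^m=y^n\rangle$ is residually a $\mathcal{C}$-group, then every prime dividing $m$ or $n$ lies in $\rho(\mathcal{C})$. The group $P(m,n)$ is the amalgamated free product of two infinite cyclic groups $\langle x\rangle$ and $\langle y\rangle$ with the subgroups $\langle x^m\rangle$ and $\langle y^n\rangle$ identified; call the common central element $z=x^m=y^n$. Suppose $p$ is a prime dividing $m$, say $m=p^a k$ with $a\geqslant 1$. I would focus on the element $x^{m/p}=x^{p^{a-1}k}$, whose $p$-th power equals $z$. The idea is that a suitable nontrivial homomorphism onto a $\mathcal{C}$-group must send this element to something whose order is divisible by $p$, thereby forcing $p\in\rho(\mathcal{C})$.

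First I would locate a good element to separate. Since $1<|m|,|n|$, the generator $x$ admits the reduced expression of length~$1$, and more usefully, consider the commutator or a translate producing length~$2$; in particular an element such as $y x y^{-1} x^{-1}$ (or simply a nontrivial power of $x$ not in the amalgamated subgroup) has a reduced form of length greater than~$1$, hence is nontrivial by the normal form theorem for generalized free products quoted just before the statement. The cleaner route is to work directly with $x^{m/p}$: I claim $x^{m/p}\neq 1$, which is clear since $\langle x\rangle$ is infinite cyclic. By residual $\mathcal{C}$-ness there is a homomorphism $\sigma$ of $P(m,n)$ onto a $\mathcal{C}$-group $X$ with $x^{m/p}\sigma\neq 1$. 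Because $X$ is periodic, $x\sigma$ has finite order, say $d$, and then $x^{m/p}\sigma\neq 1$ forces $d\nmid m/p$ while $d\mid$ (order considerations give) a multiple related to $m$; the arithmetic here shows $p\mid d$, so $X$ contains an element of order divisible by $p$, whence $p\in\rho(\mathcal{C})$.

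The main obstacle, and the step requiring the most care, is making the divisibility argument airtight: from $x^{m/p}\sigma\neq1$ alone one only learns $d\nmid m/p$, which does not immediately give $p\mid d$. To fix this I would choose the separated element more carefully. Writing $d=\operatorname{ord}(x\sigma)$ and $e=\operatorname{ord}(y\sigma)$ in the periodic group $X$, the relation $x^m=y^n$ gives $(x\sigma)^m=(y\sigma)^n=z\sigma$, so $d\mid m\cdot(\text{something})$ is not quite what I want; instead I note $z\sigma=(x\sigma)^m$ has order $d/(d,m)$ and equals $(y\sigma)^n$ of order $e/(e,n)$. To capture the $p$-part of $m$, I would instead separate an element of the form $x^{m/p}z^{-j}$ or, more robustly, argue by contradiction: if $p\notin\rho(\mathcal{C})$ then every $\mathcal{C}$-group is a $\rho(\mathcal{C})$-group with $p\notin\rho(\mathcal{C})$, so in every quotient $X$ the order $d$ of $x\sigma$ is a $\rho(\mathcal{C})$-number, hence coprime to $p$; then $m/p$ and $m$ have the same image-order behavior, i.e. $(x\sigma)^{m/p}=(x\sigma)^{m/p}$ while $(x\sigma)^m=((x\sigma)^{m/p})^p$, and since $(p,d)=1$ the map $w\mapsto w^p$ is injective on $\langle x\sigma\rangle$. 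Consequently $(x\sigma)^{m/p}=1$ would follow from $(x\sigma)^m=(x\sigma)^{m/p\cdot p}$ being controlled — the cleanest formulation is that coprimality of $p$ to $d$ makes $x^{m/p}$ and $x^m$ generate the same cyclic image, collapsing the element I wish to keep nontrivial. Pushing this through uniformly over all separating homomorphisms yields that $x^{m/p}$ lies in every $\mathcal{C}$-kernel, contradicting residual $\mathcal{C}$-ness. The symmetric argument handles primes dividing $n$, and iterating over all prime powers in $m$ and $n$ completes the proof.
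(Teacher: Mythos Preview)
Your proposal contains a genuine gap. The claim that, if $p\notin\rho(\mathcal{C})$, then $x^{m/p}$ lies in every $\mathcal{C}$-kernel is false. What the coprimality $(p,d)=1$ actually gives you is $\langle (x\sigma)^{m/p}\rangle=\langle (x\sigma)^{m}\rangle$, so $(x\sigma)^{m/p}$ is a power of $(x\sigma)^{m}=(y\sigma)^{n}$; this element need not be trivial. For a concrete instance take $m=6$, $n=2$, $p=2$, $\mathcal{C}=\{\text{finite groups of odd order}\}$: the map $P(6,2)\to\mathbb{Z}_{9}$ sending $x\mapsto 1$, $y\mapsto 3$ respects $x^{6}=y^{2}$ and sends $x^{m/p}=x^{3}$ to $3\ne 0$. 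Thus $x^{m/p}$ is not killed by every $\mathcal{C}$-quotient, and the contradiction you claim does not arise.

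The fix is precisely the commutator you mentioned and then set aside. From $(p,d)=1$ one gets $(x\sigma)^{m/p}\in\langle (y\sigma)^{n}\rangle\leqslant\langle y\sigma\rangle$, hence $[x^{m/p},y]$ dies under every homomorphism to a $\mathcal{C}$-group. On the other hand, since $|m/p|<|m|$ and $|n|>1$, the element $[x^{m/p},y]$ has a reduced form of length~$4$ in the amalgam $\langle x\rangle\ast_{\langle x^{m}\rangle=\langle y^{n}\rangle}\langle y\rangle$ and is therefore nontrivial; this contradicts residual $\mathcal{C}$-ness. This is exactly the paper's argument: it chooses $z=[x^{m/p},y]$ and uses B\'ezout to exhibit $(x\psi)^{m/p}$ as a power of $(y\psi)^{n}$, forcing $z\psi=1$. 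Your instinct to use a commutator was right; the ``cleaner route'' with a bare power of $x$ does not work.
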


\begin{proof}
Suppose that $m$ is~not~a~$\rho(\mathcal{C})$\nobreakdash-num\-ber, i.\;e.~there exists a~prime number $p \notin \rho(\mathcal{C})$ such that $p \mid m$. Let $k = m/p$ and~$z = [x^{k},y]$.

Obviously, $P(m,n)$ is~the~free product of~the~infinite cyclic groups $\langle x \rangle$ and~$\langle y \rangle$ with~the subgroups $\langle x^{m} \rangle$ and~$\langle y^{n} \rangle$ amalgamated. Since $|k| < |m|$ and~$1 < |n|$, then $x^{k} \notin \langle x^{m} \rangle$ and~$y \notin \langle y^{n} \rangle$. Therefore, $z$ has a~reduced form of~length~$4$ and~hence differs from~$1$.

Let $\psi$ be~an~arbitrary homomorphism of~$P(m,n)$ onto~a~$\mathcal{C}$\nobreakdash-group. Then the~order~$q$ of~$x\psi$ is~finite and~is a~$\rho(\mathcal{C})$\nobreakdash-num\-ber. Since $p \notin \rho(\mathcal{C})$, then $1 = (p,q) = \alpha p + \beta q$ for~some integers $\alpha$, $\beta$ and~$x^{k}\psi = (x^{k}\psi)^{\alpha p + \beta q} = (x^{k}\psi)^{\alpha p} = (x^{m}\psi)^{\alpha} = (y^{n}\psi)^{\alpha}$. Therefore, $z\psi = 1$. Since $\psi$ is~chosen arbitrarily, it~follows that $P(m,n)$ is~not~residually a~$\mathcal{C}$\nobreakdash-group.

Similar arguments prove that if~$P(m,n)$ is~residually a~$\mathcal{C}$\nobreakdash-group, then $n$ is~a~$\rho(\mathcal{C})$\nobreakdash-num\-ber.
\end{proof}

If all the~vertex and~edge groups of~$\mathcal{G}(\Gamma)$ are infinite cyclic and~their generators~$g_{v}$ ($v \in V$) and~$h_{e}$ ($e \in E$) are fixed, then the~homomorphism~$\varphi_{\varepsilon e}$ ($e \in E$, $\varepsilon = \pm 1$) is~uniquely defined by~a~number $\lambda(\varepsilon e) \in \mathbb{Z} \setminus \{0\}$ such that $g_{e(\varepsilon)}^{\lambda(\varepsilon e)} = h_{\vphantom{(}e}^{\vphantom{(}}\varphi_{\vphantom{(}\varepsilon e}^{\vphantom{(}}$. Therefore, \pagebreak instead of~$\mathcal{G}(\Gamma)$, we can consider a~\emph{labeled graph}~$\mathcal{L}(\Gamma)$, which is~obtained from~$\Gamma$ by~associating each edge $e \in E$ with~non-zero integers $\lambda(+e)$ and~$\lambda(-e)$.

If all the~vertex and~edge groups of~$\mathcal{G}(\Gamma)$ are finite cyclic, then $\mathcal{G}(\Gamma)$ can be~replaced by~a~graph~$\mathcal{M}(\Gamma)$, in~which labels are assigned not~only to~the~edges, but also to~the~vertices: the~label~$\mu(v)$ at~a~vertex~$v$ means that the~vertex group~$G_{v}$ is~of~order~$\mu(v)$. Of~course, for~each edge $e \in E$, the~equality $|\mu(e(1))/\lambda(+e)| = |\mu(e(-1))/\lambda(-e)|$ must hold. We need such graphs in~our proofs.

We call the~group defined by~$\mathcal{L}(\Gamma)$ $(\mathcal{M}(\Gamma))$ the~\emph{fundamental group of~the~labeled graph} $\mathcal{L}(\Gamma)$ $(\mathcal{M}(\Gamma))$ and~denote it~by~$\pi_{1}(\mathcal{L}(\Gamma))$ (respectively $\pi_{1}(\mathcal{M}(\Gamma))$). In~order to~avoid ambiguity when specifying the~presentation of~this group, $\mathcal{L}(\Gamma)$ (and~$\mathcal{M}(\Gamma)$) must be~considered directed. In~each of~these graphs, the~ends of~an~edge~$e$ are, as~before, denoted by~$e(1)$ and~$e(-1)$.

\section{GBS-groups and~their properties}\label{section04}

It follows from~the~previous section that each GBS-group can be defined by~a~labeled graph~$\mathcal{L}(\Gamma)$ for~some finite connected graph~$\Gamma$ and~vice versa, each labeled graph~$\mathcal{L}(\Gamma)$ over a~non-empty finite connected graph~$\Gamma$ defines some GBS-group. Until the~end of~the~paper, we assume that $\Gamma = (V,E)$ is~an~arbitrary non-empty finite connected graph with~a~vertex set~$V$ and~an~edge set~$E$, $\mathcal{L}(\Gamma)$ is~a~graph with~labels~$\lambda(\varepsilon e)$ ($e \in E$, $\varepsilon = \pm 1$), and~$G$ is~the~corresponding GBS-group with~the~vertex groups $G_{v} = \langle g_{v} \rangle$ ($v \in V$) and~the~edge subgroups $H_{\vphantom{(}\varepsilon e}^{\vphantom{(}} = \big\langle g_{e(\varepsilon)}^{\lambda(\varepsilon e)} \big\rangle$ ($e \in E$, $\varepsilon = \pm 1$). If~$\Gamma^{\prime}$ is~a~subgraph of~$\Gamma$, then by~$\mathcal{L}(\Gamma^{\prime})$ we denote the~labeled graph, the~edges of~which are associated with~the~same labels as~in~$\mathcal{L}(\Gamma)$.

As~mentioned above, the~graph~$\mathcal{L}(\Gamma)$ is~called \emph{reduced} if, for~each $e \in E$, $\varepsilon = \pm 1$, the~equality $|\lambda(\varepsilon e)| = 1$ implies that $e$ is~a~loop~\cite[p.~224]{Forester2002}. Suppose that $\mathcal{L}(\Gamma)$ is~not~reduced. Then it~contains an~edge~$e$ such that $e(1) \ne e(-1)$ and~$|\lambda(\varepsilon e)| = 1$ for~some $\varepsilon = \pm 1$. Let us choose a~maximal subtree of~$\Gamma$ containing~$e$. Then $g_{\vphantom{(}e(\varepsilon)}^{\vphantom{(}} = g_{e(-\varepsilon)}^{\lambda(\varepsilon e)\lambda(-\varepsilon e)}$ in~$G$ and~hence the~generator~$g_{e(\varepsilon)}$ can be~excluded from~the~presentation of~$G$. In~$\mathcal{L}(\Gamma)$, this operation corresponds to~the~contraction of~$e$ with~preliminary multiplication of~all the~labels around the~vertex $e(\varepsilon)$ by~$\lambda(\varepsilon e)\lambda(-\varepsilon e)$. Such a~transformation of~$\mathcal{L}(\Gamma)$ is~called an~\emph{elementary collapse} (see~\cite[p.~480]{Levitt2007}). Since $\Gamma$ is~finite, then $\mathcal{L}(\Gamma)$ can always be~reduced by~performing a~finite number of~elementary collapses.

If we replace the~generator of~a~certain vertex group with~its inverse, then all the~labels around the~corresponding vertex change sign. Similarly, replacing the~generator of~a~certain edge group with~the~inverse leads to~a~change in~the~signs of~the~labels at~the~ends of~this edge. The~listed changes of~the~generators induce isomorphisms of~$G$, and~the~corresponding graph transformations are called \emph{admissible changes of~signs}~\cite[p.~479]{Levitt2007}.

Let some maximal subtree~$T$ of~$\Gamma$ be~fixed. It~is~easy to~see that one can make all the~labels at~the~ends of~the~edges of~$T$ positive by~applying suitable admissible sign changes. We call the~resulting graph~$\mathcal{L}(\Gamma)$ \emph{$T$\nobreakdash-posi\-tive}.

An~element $a \in G$ is~said to~be~\emph{elliptic} if~it~is~conjugate to~an~element of~some vertex group. If~$G$ is~not~elementary, then the~ellipticity of~an~element does not~depend on~the~choice of~the~graph $\mathcal{L}(\Gamma)$ defining~$G$, the~set of~elliptic elements is~invariant under~automorphisms of~$G$, and~any two elliptic elements $a$,~$b \in G$ are commensurable, i.\;e.~$\langle a \rangle \cap \langle b \rangle \ne 1$~\cite[Lemma~2.1, Corollary~2.2]{Levitt2007}. This allows us to~define the~mapping $\Delta\colon G \to \mathbb{Q}^{*}$ as~follows.

Let $g \in G$ be~an~arbitrary element. Take a~non-unit elliptic element~$a$. Then the~element~$g^{-1}ag$ is~also elliptic and~hence there exist numbers $m$ and~$n$ such that $g^{-1}a^{m}g = a^{n}$. We put $\Delta(g) = n/m$.

This definition does not~depend on~the~choice of~$a$, $m$, and~$n$~\cite{Kropholler1990}. The~constructed mapping~$\Delta$ is~called the~\emph{modular homomorphism} of~$G$. The~notation~$\Delta$ is~used below without special explanations.

\begin{proposition}\label{proposition401}
\textup{\cite[Propositions~7.5,~7.11]{Levitt2015}}
Let $G$ be~non-solv\-a\-ble and $n/m \in \operatorname{Im}\Delta \setminus \{1\}$ be~a~rational number written in~lowest terms. Then $G$ contains a~subgroup isomorphic to~$\mathrm{BS}(m,n)$.
\end{proposition}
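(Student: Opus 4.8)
The plan is to produce explicit elements of~$G$ that generate a~copy of~$\mathrm{BS}(m,n)$ and then to check that they satisfy no relation beyond the~defining one. Writing $\mathrm{BS}(m,n) = \langle x, y;\ x^{-1}y^{m}x = y^{n}\rangle$, it~suffices to~find a~non-unit elliptic element~$a$ and~an~element~$t$ of~$G$ with $t^{-1}a^{m}t = a^{n}$ for~these \emph{exact} exponents, and~then to~prove that the~resulting surjection $\mathrm{BS}(m,n) \to \langle a, t\rangle$ sending $x \mapsto t$, $y \mapsto a$ is~injective.

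For~the~construction, I~would first choose any~$t \in G$ with $\Delta(t) = n/m$, which exists because $n/m \in \operatorname{Im}\Delta$. Fix a~non-unit elliptic element~$a_{0}$ (say a~vertex generator~$g_{v}$); it~has infinite order because the~vertex groups are infinite cyclic. By~the~definition of~the~modular homomorphism there are integers $M > 0$ and~$N$ with $t^{-1}a_{0}^{M}t = a_{0}^{N}$ and~$N/M = \Delta(t) = n/m$. Since $n/m$ is~in~lowest terms, i.\;e.~$\gcd(|m|,|n|) = 1$, the~equality $Nm = Mn$ forces $m \mid M$ and~$n \mid N$; setting $d = M/m$ we~get $N = nd$ as~well. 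Hence, putting $a = a_{0}^{d}$ (a~non-unit elliptic element, again of~infinite order), we~obtain $t^{-1}a^{m}t = t^{-1}a_{0}^{M}t = a_{0}^{N} = a^{n}$, exactly as~desired.

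It~remains to~verify injectivity, which I~expect to~be the~main obstacle. The~group $\mathrm{BS}(m,n)$ is~the~HNN-ex\-ten\-sion of~$\langle y\rangle \cong \mathbb{Z}$ with~stable letter~$x$ and~associated subgroups $\langle y^{m}\rangle$, $\langle y^{n}\rangle$, so~by~Britton's lemma a~non-trivial element is~represented either by~a~power~$y^{k}$ with~$k \ne 0$ or~by~a~reduced word of~positive syllable length. The~former maps to~$a^{k} \ne 1$ since $a$ has infinite order. For~the~latter I~would use the~action of~$G$ on~its Bass--Serre tree: there $a$~is~elliptic (it~fixes a~vertex), while $t$~is~hyperbolic because $\Delta(t) = n/m \ne 1$ and~every elliptic element lies in~the~kernel of~$\Delta$. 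The~relation $t^{-1}a^{m}t = a^{n}$ then exhibits $\langle a, t\rangle$ as~an~HNN-ex\-ten\-sion of~$\langle a\rangle$ with~stable letter~$t$ and~associated subgroups $\langle a^{m}\rangle$, $\langle a^{n}\rangle$; the~key point, which rests on~the~commensurability of~all elliptic elements \cite[Lemma~2.1, Corollary~2.2]{Levitt2007} and~on~the~fact that the~stabilizers of~the~edges crossed by~the~axis of~$t$ meet $\langle a\rangle$ in~exactly $\langle a^{m}\rangle$ and~$\langle a^{n}\rangle$, is~that no~reduced word of~positive length can collapse. Consequently the~surjection $\mathrm{BS}(m,n) \to \langle a, t\rangle$ has trivial kernel, and~$G$ contains a~subgroup isomorphic to~$\mathrm{BS}(m,n)$. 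The~delicate step throughout is~ensuring that the~exponents obtained are precisely the~lowest-terms pair $(m,n)$ and~that the~HNN normal-form (no-pinch) condition holds inside~$G$ rather than in~a~proper quotient of~$\mathrm{BS}(m,n)$.
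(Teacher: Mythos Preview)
The paper does not prove this proposition; it is quoted from \cite[Propositions~7.5,~7.11]{Levitt2015} and used as a black box, so there is no in-paper argument against which to compare your attempt.

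Judged on its own, the constructive half of your sketch is sound. A non-solvable GBS group is non-elementary, so any two non-trivial elliptic elements are commensurable; choosing $t$ with $\Delta(t)=n/m$ and a non-trivial elliptic~$a_{0}$ yields $t^{-1}a_{0}^{M}t=a_{0}^{N}$ with $N/M=n/m$, and coprimality of $m,n$ lets you pass to $a=a_{0}^{d}$ satisfying $t^{-1}a^{m}t=a^{n}$. The remark that elliptic elements lie in $\ker\Delta$, hence that $t$ is hyperbolic, is also correct.

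The injectivity of $\mathrm{BS}(m,n)\to\langle a,t\rangle$ is, as you anticipate, where the actual content lies, and your treatment is a sketch rather than a proof. The sentence about edge stabilisers along the axis of~$t$ meeting $\langle a\rangle$ in exactly $\langle a^{m}\rangle$ and $\langle a^{n}\rangle$ is asserted without justification, and it is not clear it is true as stated: the vertex fixed by~$a$ need not lie on the axis of~$t$, and the intersections of successive edge stabilisers with $\langle a\rangle$ along that axis can vary. What one actually needs is an analysis of the action of $H=\langle a,t\rangle$ on its minimal invariant subtree, showing that the induced graph of groups for~$H$ is a single loop with infinite cyclic vertex group, and then identifying the two edge indices as exactly $|m|$ and $|n|$. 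In particular one must rule out that the vertex stabiliser of~$H$ strictly contains $\langle a\rangle$ (which would a~priori produce $\mathrm{BS}(em,en)$ for some $e>1$) and that the quotient graph is more complicated than a loop. That analysis, together with a suitable choice or adjustment of~$a$ and~$t$, is the substance of Levitt's argument; as written you have established only a surjection $\mathrm{BS}(m,n)\twoheadrightarrow\langle a,t\rangle$.
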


\begin{proposition}\label{proposition402}
\textup{\cite[Lemma~7.6]{Levitt2015}}
If $G$ is~non-solv\-a\-ble and~contains a~subgroup isomorphic to~$\mathrm{BS}(1,n)$, $|n| \ne 1$, then it~contains a~subgroup isomorphic to~$\mathrm{BS}(q,qn)$, where $q$ is~some prime number.
\end{proposition}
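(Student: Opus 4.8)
The plan is to first translate the hypothesis into a statement about $\operatorname{Im}\Delta$ and then to realise $\mathrm{BS}(q,qn)$ as the subgroup generated by $a$ together with a suitable elliptic $q$-th root of an edge-subgroup generator. Write the given subgroup as $\langle a,b\rangle$ with $a^{-1}ba=b^{n}$. First I would show that $b$ is elliptic in $G$: conjugate elements have equal translation length in the Bass--Serre tree of $G$, whereas $b^{n}=a^{-1}ba$ has translation length $|n|$ times that of $b$, so $|n|\ne 1$ forces the translation length of $b$ to vanish. Hence $b$ is a nontrivial elliptic element, $\Delta(a)=n/1=n$, and therefore $n\in\operatorname{Im}\Delta$ with $|n|\geq 2$.

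With this in hand the problem reduces to producing a prime $q$ and a nontrivial elliptic element $c\in G$ such that $a^{-1}c^{q}a=c^{qn}$; once this holds, $\langle a,c\rangle$ is a homomorphic image of $\mathrm{BS}(q,qn)$, and it remains only to check that the image is faithful. To find such a $c$, I would use that $\operatorname{Im}\Delta$ is generated by the label ratios $\lambda(-e)/\lambda(+e)$: since $n\in\operatorname{Im}\Delta$ and $|n|\geq 2$, some label $\lambda(\varepsilon e)$ has absolute value greater than $1$, and I fix a prime $q$ dividing it. Then the edge-subgroup generator $w=g_{e(\varepsilon)}^{\lambda(\varepsilon e)}$ is a proper $q$-th power: setting $c=g_{e(\varepsilon)}^{\lambda(\varepsilon e)/q}$ gives $w=c^{q}$, with $c$ and $w$ nontrivial and elliptic. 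Because any two elliptic elements are commensurable, there are nonzero integers $s,j$ with $(c^{q})^{s}=b^{j}$, and then
\[
(a^{-1}c^{q}a)^{s}=a^{-1}(c^{q})^{s}a=a^{-1}b^{j}a=b^{jn}=\big((c^{q})^{s}\big)^{n}=(c^{qn})^{s}.
\]
Both $a^{-1}c^{q}a$ and $c^{qn}$ are elliptic, so invoking uniqueness of roots among elliptic elements yields $a^{-1}c^{q}a=c^{qn}$, as desired.

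Finally I would prove that $\langle a,c\rangle\cong\mathrm{BS}(q,qn)$ rather than a proper quotient. Here $\langle c\rangle$ is infinite cyclic and, since $c^{q}$ generates the edge subgroup $H_{\varepsilon e}$, the element $a$ plays the role of the stable letter of an HNN-extension with associated subgroups $\langle c^{q}\rangle$ and $\langle c^{qn}\rangle=a^{-1}\langle c^{q}\rangle a$; as $a$ is hyperbolic it moves the vertex fixed by $c$, so Britton's lemma (the normal form theorem recalled in Section~\ref{section03}) shows that a reduced word in $a$ and $c$ which is nontrivial in $\mathrm{BS}(q,qn)$ stays nontrivial in $G$.

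The step I expect to be the main obstacle is the uniqueness of roots for elliptic elements, i.e.\ the passage from $(a^{-1}c^{q}a)^{s}=(c^{qn})^{s}$ to $a^{-1}c^{q}a=c^{qn}$. This should follow from the fact that in a non-solvable GBS-group the centraliser of a nontrivial elliptic element is infinite cyclic, so that $a^{-1}c^{q}a$ and $c^{qn}$, both lying in the centraliser of their common power, are powers of a single generator and hence equal once their $s$-th powers agree; establishing (or citing) this cyclic-centraliser property, together with the normal-form verification of faithfulness, is where the real work lies.
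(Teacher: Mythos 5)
The paper offers no proof of this proposition at all: it is imported verbatim from \cite[Lemma~7.6]{Levitt2015}, so there is no argument of the author's to compare yours with, and the only question is whether your argument stands on its own. It does not. The step you yourself flag as the main obstacle is genuinely false: neither ``uniqueness of roots among elliptic elements'' nor your fallback claim that centralizers of non-trivial elliptic elements in a non-solvable GBS-group are infinite cyclic holds under the hypotheses of the proposition. Consider $G_{1} = \langle g_{u}, g_{1}, t;\ t^{-1}g_{u}t = g_{u}^{2},\ g_{u}^{3} = g_{1}^{3}\rangle$, the GBS-group of a graph with a loop at $u$ labeled $(1,2)$ and an edge from $u$ to $w$ labeled $(3,3)$. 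It is non-solvable and contains $\mathrm{BS}(1,2) = \langle t, g_{u}\rangle$, yet $g_{u}$ and $g_{1}$ are distinct elliptic elements with $g_{u}^{3} = g_{1}^{3}$ (roots are not unique), and the centralizer of the elliptic element $g_{u}^{3}$ contains the non-abelian subgroup $\langle g_{u}, g_{1}\rangle \cong P(3,3)$ (centralizers are not cyclic).

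Worse, the equation $a^{-1}c^{q}a = c^{qn}$ that you want to extract from the equality of $s$-th powers can itself fail for the element your recipe produces, so this is not a reparable bookkeeping issue. Enlarge the example to $G_{2} = \langle g_{u}, g_{1}, g_{2}, t;\ t^{-1}g_{u}t = g_{u}^{2},\ g_{u}^{3} = g_{1}^{3},\ g_{1}^{5} = g_{2}^{5}\rangle$ (add an edge from $w$ to $w'$ labeled $(5,5)$), take $\mathrm{BS}(1,2) = \langle t, g_{u}\rangle$ (so $a = t$, $b = g_{u}$, $n = 2$), and apply your recipe to the label $5$: $q = 5$, $c = g_{2}$, $c^{5} = g_{1}^{5}$. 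Then $(t^{-1}c^{5}t)^{3} = g_{u}^{30} = (c^{10})^{3}$, exactly as your computation predicts with $s = 3$, but $t^{-1}c^{5}t \ne c^{10}$: since $g_{1}^{5} \notin \langle g_{u}\rangle$ in the base group, Britton's lemma gives $t^{-1}g_{1}^{5}t$ a reduced form of length $2$, so it lies outside the base group, whereas $c^{10}$ lies inside it. The underlying point you miss is that the prime $q$ and the edge cannot be chosen arbitrarily: one needs $c^{q}$ to be an honest power of (a conjugate of) $b$, not merely commensurable with it, which forces the edge to sit where $b$ does and is precisely where the real work of Levitt's lemma lies. Finally, even when the relation does hold, your faithfulness sketch is inadequate: relations of this shape can define proper quotients of $\mathrm{BS}(q,qn)$ --- inside $\langle a, b\rangle$ itself, for $q \mid n$ the element $y = ab^{n/q}a^{-1}$ satisfies $a^{-1}y^{q}a = y^{qn}$, yet $\langle a, y\rangle$ is solvable and hence is not isomorphic to $\mathrm{BS}(q,qn)$ --- so injectivity genuinely depends on the choice of $c$ and requires an argument comparing normal forms in $\mathrm{BS}(q,qn)$ with the action on the Bass--Serre tree of $G$, which your appeal to Britton's lemma does not supply.
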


\begin{proposition}\label{proposition403}
Let $\Gamma$ be~a~tree and~$\mathcal{I}$ be~a~non-empty finite set of~indices, which is~the~disjoint union of~the~set $\big\{(e,\varepsilon) \mid e \in E,\ \varepsilon = \pm 1\big\}$ and~some set~$\mathcal{J}$. Let also $\Sigma = \{H_{i} \mid i \in \mathcal{I}\}$ be~a~family of~subgroups of~$G_{v}$ \textup{(}$v \in V$\textup{)} and~$\nu\colon \mathcal{I} \to V$ be~a~function such that, for~any $i \in \mathcal{I}$, $1 \ne H_{i} \leqslant G_{\nu(i)}$ and~if $i = (e,\varepsilon)$ for~some $e \in E$, $\varepsilon = \pm 1$, then $H_{i} = H_{e\varepsilon}$ and~$\nu(i) = e(\varepsilon)$. Finally, let $K = \bigcap_{i \in \mathcal{I}} H_{i}$ and~$\chi(i) = [G_{\nu(i)}:H_{i}]$. Then the~following statements hold.

\textup{1.}\hspace{1ex}$K \leqslant \bigcap_{v \in V} G_{v}$ and~therefore the~numbers $\mu(v) = [G_{v}:K]$ \textup{(}$v \in V$\textup{)} are defined.

\textup{2.}\hspace{1ex}$K \ne 1$ and~therefore all the~numbers~$\mu(v)$ \textup{(}$v \in V$\textup{)} are finite.

\textup{3.}\hspace{1ex}The least common multiple~$\mu$ of~$\mu(v)$ \textup{(}$v \in V$\textup{)} divides $\prod_{i \in \mathcal{I}} \chi(i)$.
\end{proposition}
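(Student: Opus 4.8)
The plan is to exploit the tree structure of $\Gamma$, which makes $G$ an iterated amalgamated free product of the infinite cyclic vertex groups, together with the elementary fact that in an amalgam $A *_C B$ one has $A \cap B = C$. The basic computation I will use repeatedly is this: for adjacent vertices $a = e(\varepsilon)$, $b = e(-\varepsilon)$ joined by an edge~$e$, Proposition~\ref{proposition301} lets me view $\langle G_a, G_b \rangle$ as the amalgam $G_a *_{H_{\varepsilon e}} G_b$ inside~$G$, so that $G_a \cap G_b = H_{+e} = H_{-e}$ and hence $[G_a : G_a \cap G_b] = |\lambda(\varepsilon e)|$.

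For statement~1 I would note that $K \leqslant H_{(e,\varepsilon)} = H_{\varepsilon e}$ for every edge~$e$ and every $\varepsilon = \pm 1$. Since $H_{+e}$ and $H_{-e}$ coincide as subgroups of~$G$ while lying in $G_{e(1)}$ and $G_{e(-1)}$ respectively, $K$ is contained in both $G_{e(1)}$ and $G_{e(-1)}$. As $\Gamma$ is connected, every vertex is an endpoint of some edge, so $K \leqslant G_v$ for all $v$, i.e.\ $K \leqslant \bigcap_{v \in V} G_v$.

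For statement~2, the adjacency computation shows that vertex groups joined by an edge are commensurable, and since $\Gamma$ is connected all the $G_v$ — and therefore all the $H_i$, each of finite index in some $G_{\nu(i)}$ — lie in a single commensurability class of infinite cyclic subgroups. A finite intersection of pairwise commensurable infinite cyclic groups is again nontrivial, since each successive intersection remains of finite index in the previous one; hence $K \ne 1$ and every $\mu(v) = [G_v : K]$ is finite. Moreover, fixing~$v$ and using $K \leqslant G_v$ from statement~1, I can rewrite $K = \bigcap_{i \in \mathcal{I}}(H_i \cap G_v)$ as an intersection of subgroups of the infinite cyclic group~$G_v$, whence $\mu(v) = \operatorname{lcm}_{i}[G_v : H_i \cap G_v]$ and so $\mu = \operatorname{lcm}_{v,i}[G_v : H_i \cap G_v]$.

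The heart of the argument is statement~3, and the main obstacle is keeping track of the labels so as not to over-count. The key device is a transport lemma: if $a = e(\varepsilon)$, $b = e(-\varepsilon)$ are joined by~$e$ and $S \leqslant G_b$, then $S \cap G_a = S \cap H_{\varepsilon e}$, and since $G_b$ is abelian, $[G_a : S \cap G_a]$ divides $|\lambda(\varepsilon e)| \cdot [G_b : S]$ — crucially, only the label at the destination vertex~$a$ enters. Walking along the unique simple path $\nu(i) = v_k, v_{k-1}, \ldots, v_0 = v$ in the tree and transporting $H_i$ one vertex at a time, I obtain that $[G_v : H_i \cap G_v]$ divides $\chi(i)$ times the product, over the edges of this path, of the label of each such edge at its endpoint closer to~$v$. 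The crucial observation is that the indices of~$\mathcal{I}$ attached to these factors are pairwise distinct: distinct path edges give distinct pairs $(e,\varepsilon)$, and if $i = (e_0,\varepsilon_0)$ is itself an edge index whose edge lies on the path, then $e_0$ must be the final edge, entered from $e_0(-\varepsilon_0)$, so the path contributes the factor indexed by $(e_0,-\varepsilon_0)$ while $\chi(i)$ is the factor indexed by $(e_0,\varepsilon_0)$. Thus $[G_v : H_i \cap G_v]$ divides a product of pairwise distinct $\chi(i')$, hence divides $\prod_{i' \in \mathcal{I}} \chi(i')$. Since $\mu = \operatorname{lcm}_{v,i}[G_v : H_i \cap G_v]$ and a least common multiple of divisors of a number~$N$ itself divides~$N$, it follows that $\mu$ divides $\prod_{i \in \mathcal{I}} \chi(i)$, completing the proof.
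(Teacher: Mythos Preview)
Your argument is essentially correct and takes a genuinely different route from the paper. The paper proves all three parts by induction on~$|V|$: for statements~2 and~3 it deletes an edge~$f$, splits the tree into two components~$\Gamma_{\pm 1}$, applies the inductive hypothesis to each, and then recombines using $K = K_1 \cap K_{-1}$ together with the identity $[K_\varepsilon K_{-\varepsilon}:K_{-\varepsilon}] = [K_\varepsilon : K_\varepsilon \cap K_{-\varepsilon}]$ inside the infinite cyclic group $H_{+f}=H_{-f}$. You instead argue directly: for~2 you invoke commensurability of all the~$G_v$ (and hence all the~$H_i$) at once, and for~3 you use your transport lemma to walk each~$H_i$ along the unique path from~$\nu(i)$ to~$v$, picking up exactly one label per edge, always at the end closer to~$v$. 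Your observation that the resulting collection of indices $(e,\varepsilon)$ together with~$i$ itself is repetition-free is the combinatorial core, and it is correct: when $i=(e_0,\varepsilon_0)$ lies on the path, the edge~$e_0$ is the one incident to~$\nu(i)=v_k$ (so the \emph{first} edge of the path as you have listed it, not the ``final'' one --- your wording suggests you are mentally traversing the path in the reverse direction, but the conclusion that the path contributes~$(e_0,-\varepsilon_0)$ is right either way). The paper's inductive proof is cleaner to state and avoids tracking paths explicitly; your argument is more hands-on but makes the role of each label~$\lambda(\varepsilon e)$ completely transparent.

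Two small points to tidy up. First, your proof of statement~1 assumes every vertex is an edge endpoint, which fails when $\Gamma$ is a single vertex with $E=\varnothing$; in that case $\mathcal{I}=\mathcal{J}$ and $K\leqslant G_v$ is immediate, but you should say so (the paper handles this case separately). Second, your iterated transport tacitly uses that $H_i\cap G_{v_{j}}\cap G_{v_{j-1}} = H_i\cap G_{v_{j-1}}$, i.e.\ that $H_i\cap G_{v_{j-1}}$ already lies in~$G_{v_j}$; this follows from the tree-product fact $G_{v_k}\cap G_{v_{j-1}}\leqslant H_{\pm e}\leqslant G_{v_j}$ for the separating edge~$e$ between $v_j$ and~$v_{j-1}$, and is worth making explicit.
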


\begin{proof}
\textup{1.}\hspace{1ex}We note that $V = \{\nu(i) \mid i \in \mathcal{I}\}$. Indeed, if~$\Gamma$ consists of~one vertex~$v$, then $v = \nu(i)$ for~all $i \in \mathcal{I}$ and~the~desired equality holds because $\mathcal{I}$ is~non-empty. Otherwise, each vertex is~incident to~at~least one edge, and~so, for~any $v \in V$, there exist $e \in E$, $\varepsilon = \pm 1$ such that $v = e(\varepsilon) = \nu(i)$, where $i = (e,\varepsilon) \in \mathcal{I}$. Hence,
$$
K = \bigcap_{i \in \mathcal{I}} H_{i} \leqslant \bigcap_{i \in \mathcal{I}} G_{\nu(i)} = \bigcap_{v \in V} G_{v},
$$
as~required.

\textup{2.}\hspace{1ex}We put $H = \bigcap_{v \in V} G_{v}$ and~use induction on~the~number of~vertices in~$\Gamma$ to~show that $H$ is~an~infinite cyclic subgroup. If~$\Gamma$ contains only one vertex $v$, then $H = G_{v}$ and~the~required statement is~obvious. Therefore, we further assume that $\Gamma$ contains more than one vertex and~so $E \ne \varnothing$.

{\parfillskip=0pt
Let $f \in E$ be~an~arbitrary edge and~$\Gamma - f$ be~the~graph that is~obtained from~$\Gamma$ by~removing~$f$. Since $\Gamma$ is~a~tree, $\Gamma - f$ has exactly two connected components. For~every $\varepsilon = \pm 1$, denote by~$\Gamma_{\varepsilon}$ the~connected component of~$\Gamma - f$ that contains $f(\varepsilon)$ and~by~$V_{\varepsilon}$ the~vertex set of~$\Gamma_{\varepsilon}$. By~the~inductive hypothesis, the~subgroup $H_{\varepsilon} = \bigcap_{v \in V_{\varepsilon}} G_{v}$ is~infinite cyclic, and~hence $H_{\varepsilon} \cap H_{\varepsilon f} \ne 1$ as~the~intersection of~two non-trivi\-al subgroups of~$G_{f(\varepsilon)}$.\par}

By~Proposition~\ref{proposition301}, the~free product of~the~groups $G_{f(1)}$ and~$G_{f(-1)}$ with~the~subgroups $H_{+f}$ and~$H_{-f}$ amalgamated is~embedded into~$G$ by~means of~the~identity mapping of~the~generators. Therefore, the~equalities $H_{+f} = G_{f(1)} \cap G_{f(-1)} = H_{-f}$ hold in~$G$~\cite[Theorem~4.4.3]{MagnusKarrasSolitar1974},~and
$$
H = H_{1} \cap H_{-1} = (H_{1} \cap G_{f(1)}) \cap (H_{-1} \cap G_{f(-1)}) = (H_{1} \cap H_{+f}) \cap (H_{-1} \cap H_{-f})
$$
{\parfillskip=0pt
is~the~intersection of~two non-trivi\-al subgroups of~the~infinite cyclic group $H_{+f} = H_{-f}$.\par}

Thus, $H \ne 1$, and~so $H_{i} \cap H$ is~an~infinite cyclic subgroup of~$G_{\nu(i)}$ for~any $i \in \mathcal{I}$. As~noted above, $V = \{\nu(i) \mid i \in \mathcal{I}\}$, hence $K = \bigcap_{i \in \mathcal{I}} (H_{i} \cap G_{\nu(i)}) = \bigcap_{i \in \mathcal{I}} (H_{i} \cap H)$. Since all the~subgroups $H_{i} \cap H$ ($i \in \mathcal{I}$) lie in~$H$ and~$\mathcal{I}$ is~finite, it~follows that $K \ne 1$.

\textup{3.}\hspace{1ex}We again use induction on~the~number of~vertices in~$\Gamma$. If~$\Gamma$ contains only one vertex~$v$, then $\mu = \mu(v) = [G_{v}:K]$ and~$\chi(i) = [G_{v}:H_{i}]$. Therefore, the~required statement follows from~the~relation $[G_{v}:\bigcap_{i \in \mathcal{I}} H_{i}] \mid \prod_{i \in \mathcal{I}}[G_{v}:H_{i}]$, and~we further assume that $\Gamma$ has at~least two vertices.

Let us choose an~arbitrary edge $f \in E$ and~denote by~$\Gamma_{\varepsilon}$ ($\varepsilon = \pm 1$) the~connected component of~the~graph $\Gamma - f$ that contains~$f(\varepsilon)$. Let also $V_{\varepsilon}$ be~the~vertex set of~$\Gamma_{\varepsilon}$, $\mathcal{I}_{\varepsilon} = \{i \mid i \in \mathcal{I},\ \nu(i) \in V_{\varepsilon}\}$, $K_{\varepsilon} = \bigcap_{i \in \mathcal{I}_{\varepsilon}} H_{i}$, and~$p_{\varepsilon} = \prod_{i \in \mathcal{I}_{\varepsilon}} \chi(i)$. Then $\prod_{i \in \mathcal{I}} \chi(i) = p_{1}p_{-1}$ and~the~equality $K = K_{1} \cap K_{-1}$ holds in~$G$.

By~the~definition of~$\nu$, the~set of~indices $\mathcal{I}_{\varepsilon}$ contains the~pair $(f,\varepsilon)$ and~is therefore non-empty. It~is~also easy to~see that $\Gamma_{\varepsilon}$, $\mathcal{I}_{\varepsilon}$, $\Sigma_{\varepsilon} = \{H_{i} \mid i \in \mathcal{I}_{\varepsilon}\}$, and~$\nu_{\varepsilon} = \nu|_{\mathcal{I}_{\varepsilon}}$ satisfy all the~conditions of~the~proposition. Hence, the~numbers $\mu_{\varepsilon}(v) = [G_{v}:K_{\varepsilon}]$ ($v \in V_{\varepsilon}$) are defined and~finite in~view of~Statements~1,~2, while their least common multiple~$\mu_{\varepsilon}$ divides~$p_{\varepsilon}$ by~the~inductive hypothesis.

Since $K_{1} \leqslant H_{+f}$, $K_{-1} \leqslant H_{-f}$, and~$H_{+f} = G_{f(1)} \cap G_{f(-1)} = H_{-f}$, then $K_{1}K_{-1} \leqslant G_{f(1)} \cap G_{f(-1)}$ and~$[K_{1}K_{-1}:K_{\varepsilon}] \mid [G_{f(\varepsilon)}:K_{\varepsilon}] = \mu_{\varepsilon}(f(\varepsilon)) \mid \mu_{\varepsilon}$ for~every $\varepsilon = \pm 1$. Therefore, for~any $\varepsilon = \pm 1$, $v \in V_{\varepsilon}$, we have
$$
\mu(v) = [G_{v}:K] = [G_{v}:K_{\varepsilon}][K_{\varepsilon}:K_{\varepsilon} \cap K_{-\varepsilon}] = [G_{v}:K_{\varepsilon}][K_{\varepsilon}K_{-\varepsilon}:K_{-\varepsilon}] \mid \mu_{\varepsilon}\mu_{-\varepsilon} \mid p_{1}p_{-1}.
$$
Hence, the~least common multiple of~the~numbers~$\mu(v)$ ($v \in V = V_{1} \cup V_{-1}$) also divides the~product~$p_{1}p_{-1}$.
\end{proof}

\begin{proposition}\label{proposition404}
Let $G$ be~non-el\-e\-men\-tary, $T$ be~a~maximal subtree of~$\Gamma$, $E_{T}$ be~the~edge set of~$T$, and~$K = \bigcap_{e \in E,\,\varepsilon = \pm 1} H_{\varepsilon e}$. Then the~following statements hold.

\textup{1.}\hspace{1ex}$K \leqslant \bigcap_{v \in V} G_{v}$ and~therefore the~numbers $\mu(v) = [G_{v}:K]$ \textup{(}$v \in V$\textup{)} are defined.

\textup{2.}\hspace{1ex}$K \ne 1$ and~therefore all the~numbers~$\mu(v)$ \textup{(}$v \in V$\textup{)} are finite.

\textup{3.}\hspace{1ex}The~least common multiple~$\mu$ of~$\mu(v)$ \textup{(}$v \in V$\textup{)} divides $\prod_{e \in E,\,\varepsilon = \pm 1} \lambda(\varepsilon e)$.

\textup{4.}\hspace{1ex}If $\mathcal{L}(\Gamma)$ is~$T$\nobreakdash-posi\-tive, then $g_{v}^{\mu(v)} = g_{w}^{\mu(w)}$ for~any $v,w \in V$ and~$\lambda(+e)/\mu(e(1)) = \lambda(-e)/\mu(e(-1))$ for~any $e \in E_{T}$.

\textup{5.}\hspace{1ex}If $\operatorname{Im}\Delta \subseteq \{1,-1\}$, then $K$ is~normal in~$G$ and~the~centralizer of~$K$ in~$G$ coincides with~$\Delta^{-1}(1)$.

\textup{6.}\hspace{1ex}If $\operatorname{Im}\Delta \subseteq \{1,-1\}$ and~$\tau$ is~a~homomorphism of~$G$ such that $\ker\tau \cap G_{v} = K$ for~all $v \in V$, then $\ker\tau$ is~an~extension of~$K$ by~a~free group.

\textup{7.}\hspace{1ex}If $\operatorname{Im}\Delta \subseteq \{1,-1\}$ and~$\mathcal{L}(\Gamma)$ is~reduced, then $C(G) = K$.
\end{proposition}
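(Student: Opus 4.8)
The plan is to prove the two inclusions $K \subseteq C(G)$ and $C(G) \subseteq K$ separately. The first is immediate from the earlier parts of this proposition: by Statement~1 the subgroup $K$ lies in each (infinite cyclic) vertex group $G_v$, so $K$ is itself infinite cyclic, and by Statement~2 it is non-trivial; by Statement~5 it is normal in~$G$. Since the cyclic radical is by definition the largest cyclic normal subgroup, $K \subseteq C(G)$. All the work lies in the reverse inclusion.

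So fix a generator $c$ of $C(G)$; as $K \subseteq C(G)$ and $K \neq 1$, the element $c$ has infinite order, and it suffices to show that $c \in H_{\varepsilon e}$ for every $e \in E$, $\varepsilon = \pm 1$. The first key step is to place $c$ inside a single vertex group. Some power of $c$ generates $K$ and hence lies in $G_v$, so it is elliptic; since a hyperbolic element has no non-trivial elliptic power (the cyclically reduced length multiplies under taking powers), $c$ is elliptic, i.\,e.~$gcg^{-1} \in G_{v_1}$ for suitable $g \in G$ and $v_1 \in V$. Because $\langle c \rangle = C(G)$ is normal, conjugation by~$g$ restricts to an automorphism of $\langle c \rangle \cong \mathbb{Z}$, so $gcg^{-1} = c^{\pm1}$; thus $c^{\pm1} \in G_{v_1}$ and therefore $c \in G_{v_1}$.

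Next I would propagate this membership along a maximal subtree~$T$ of~$\Gamma$. Suppose $c \in G_u$ and let $e \in E_T$ join $u$ to a neighbour~$w$. By Proposition~\ref{proposition301} the amalgamated free product $P = G_u *_{H_{+e} = H_{-e}} G_w$ embeds in~$G$ via the identity map on generators, so the equality $g_w^{-1} c g_w = c^{\pm1}$ (which holds because $\langle c \rangle$ is normal) may be read inside~$P$. Since $\mathcal{L}(\Gamma)$ is reduced and $e$ is not a loop, $|\lambda(\pm e)| \geqslant 2$, whence $g_w \notin H_{-e}$; if $c$ were not in the amalgamated subgroup, then $g_w^{-1} c g_w$ would be a reduced word of length~$3$ in~$P$, contradicting that it equals the length-$1$ element $c^{\pm1} \in G_u$. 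Hence $c \in H_{+e} = H_{-e} \subseteq G_w$. As $T$ is connected and spans~$V$, an induction then gives $c \in G_v$ for every $v \in V$ and $c \in H_{\varepsilon e}$ for every tree edge.

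It remains to treat an edge $e \in E \setminus E_T$. Removing~$e$ leaves a connected graph containing~$T$, so by Proposition~\ref{proposition301} the group~$G$ is the HNN-extension $\langle G_0, t_e;\ t_e^{-1}(g_{e(1)}^{\lambda(+e)})t_e = g_{e(-1)}^{\lambda(-e)}\rangle$ of $G_0 = \pi_1(\mathcal{G}(\Gamma - e))$ with associated subgroups $H_{+e}, H_{-e} \leqslant G_0$. Now $c \in G_{e(1)} \leqslant G_0$ and, by normality, $t_e^{-1} c t_e = c^{\pm1} \in G_0$; Britton's lemma forces $c \in H_{+e}$, and applying the same argument to $t_e c t_e^{-1}$ (or using the isomorphism $H_{+e} \to H_{-e}$) gives $c \in H_{-e}$. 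Combining all edges yields $c \in \bigcap_{e \in E,\,\varepsilon = \pm1} H_{\varepsilon e} = K$, so $C(G) = K$. I expect the main obstacle to be this last stretch---upgrading ``$c$ is elliptic'' to ``$c$ lies in every edge subgroup''---where reducedness is indispensable: it is exactly what guarantees that the one-edge amalgam and HNN decompositions supplied by Proposition~\ref{proposition301} are non-degenerate, so that the normal form theorem and Britton's lemma can detect~$c$.
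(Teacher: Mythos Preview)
Your argument is correct, and it follows a route that is close to, but not identical with, the paper's. Both proofs split $C(G)=K$ into the easy inclusion $K\subseteq C(G)$ (from Statements~1, 2, 5) and the substantive inclusion $C(G)\subseteq K$, and both ultimately rely on Britton's lemma for the non-tree edges: if $c\in G_0$ and $t_e^{-\varepsilon}ct_e^{\varepsilon}\in G_0$, then $c\in H_{\varepsilon e}$.

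The difference is in how the inclusion $C(G)\leqslant\bigcap_{v}G_v$ is obtained. The paper simply quotes this as \cite[Lemma~5]{DelgadoRobinsonTimm2017} and then argues, dually to you, that any $h\in\bigl(\bigcap_v G_v\bigr)\setminus K$ has a conjugate leaving $\bigcap_v G_v$; the key observation there is that $\bigcap_v G_v=\bigcap_{e\in E_T,\varepsilon}H_{\varepsilon e}$, so only non-tree edges can obstruct membership in~$K$. You instead reprove $C(G)\leqslant\bigcap_v G_v$ from scratch: first $c$ is elliptic because a power of it lies in $K\leqslant G_v$ (and hyperbolic elements have positive translation length on the Bass--Serre tree, which scales under powers), then you propagate $c\in G_v$ along tree edges via a normal-form argument in the one-edge amalgam. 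This makes your proof self-contained, at the cost of invoking the elliptic/hyperbolic dichotomy, whereas the paper's version is shorter but leans on an external reference. Your remark that reducedness is what makes the one-edge amalgams non-degenerate (so that $g_w\notin H_{-e}$) is exactly the point; the paper's argument hides this inside the cited lemma.

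One small wording issue: ``cyclically reduced length multiplies under taking powers'' is not literally how one justifies that powers of hyperbolic elements are hyperbolic---the clean statement is that the translation length on the Bass--Serre tree satisfies $\ell(g^n)=|n|\,\ell(g)$. The conclusion is correct, but you may want to phrase it that way.
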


\begin{proof}
1,\hspace{.9ex}2,\hspace{.9ex}3.\hspace{1ex}Since $G$ is~an~HNN-ex\-ten\-sion of~the~tree product $P = \pi_{1}(\mathcal{L}(T))$ and~all the~groups~$G_{v}$ ($v \in V$) are contained in~$P$, then Statements~1\nobreakdash--3 are obtained by~applying Proposition~\ref{proposition403} to~the~tree~$T$, the~group~$P$, the~set
$$
\mathcal{I} = 
\big\{(e,\varepsilon) \mid e \in E_{T},\,\varepsilon = \pm 1\big\} \cup 
\big\{(e,\varepsilon) \mid e \in E \setminus E_{T},\,\varepsilon = \pm 1\big\},
$$
the~family $\Sigma = \{H_{e\varepsilon} \mid e \in E,\ \varepsilon = \pm 1\}$, and~the~function $\nu\colon \mathcal{I} \to V$ such that $\nu(e,\varepsilon) = e(\varepsilon)$. It~should only be~noted that $E$ and~$\mathcal{I}$ are non-empty because $G$ is~not~elementary. Besides, the~relation $\mu \mid \prod_{e \in E,\,\varepsilon = \pm 1} |\lambda(\varepsilon e)|$, which follows from~Statement~3 of~Proposition~\ref{proposition403}, is~equivalent to~the~required~one.

4.\hspace{1ex}We use induction on~the~length of~the~path connecting $v$ and~$w$ in~$T$. If~$v = w$, the~statement is~obvious, so we assume that this path contains an~edge~$e$ connecting~$v$ with~some vertex~$u$ (which may coincide with~$w$). Let also, for~definiteness, $e(1) = v$ and~$e(-1) = u$.

\vspace{-1pt}

By~the~inductive hypothesis, $g_{u}^{\mu(u)} = g_{w}^{\mu(w)}$. Since $e \in E_{T}$, the~equalities $g_{v}^{\lambda(+e)} = g_{u}^{\lambda(-e)}$ and~$H_{+e} = H_{-e}$ hold in~$G$. It~follows that $[H_{+e}:K] = [H_{-e}:K] = k$ for~some $k \geqslant 1$. Since \hfill $\mathcal{L}(\Gamma)$ \hfill is \hfill $T$\nobreakdash-posi\-tive, \hfill then \hfill $[G_{v}:H_{+e}] = \lambda(+e)$ \hfill and \hfill $[G_{u}:H_{-e}] = \lambda(-e)$. Therefore,
\begin{align*}
\mu(v) = [G_{v}:K] &= [G_{v}:H_{+e}][H_{+e}:K] = \lambda(+e)k,\\
\mu(u) = [G_{u}:K] &= [G_{u}:H_{-e}][H_{-e}:K] = \lambda(-e)k,
\end{align*}
and
$$
g_{v}^{\mu(v)} = g_{v}^{\lambda(+e)k} = g_{u}^{\lambda(-e)k} = g_{u}^{\mu(u)} = g_{w}^{\mu(w)}.
$$
Since the~vertices $v$ and~$w$ are chosen arbitrarily, the~above equalities also imply that $\lambda(+e)/\mu(e(1)) = \lambda(-e)/\mu(e(-1))$ for~any $e \in E_{T}$.

{\parfillskip=0pt
5.\hspace{1ex}Let $x$ be~a~generator of~$K$. It~is~obvious that $[x,g_{v}] = 1$ for~every $v \in V$. The~element~$x$ is~elliptic, so if~$e \in E \setminus E_{T}$, then there exists a~number $n \geqslant 1$ such that $t_{e}^{-1}x^{n}t_{e}^{\vphantom{1}} = x^{\Delta(t_e)n}$. Since $\operatorname{Im}\Delta \subseteq \{1,-1\}$, $t_{e}^{-1}xt_{e}^{\vphantom{1}} \in H_{-e}$, $x^{\Delta(t_e)} \in H_{-e}$, and~$H_{-e}$ is~infinite cyclic, the~last equality implies that $t_{e}^{-1}xt_{e}^{\vphantom{1}} = x^{\Delta(t_e)}$. Therefore, $K$ is~normal in~$G$.\par}

If $g \in G$ is~an~arbitrary element, then $g^{-1}xg \in K$ because $K$ is~normal. It~follows that $g^{-1}xg = x^{\Delta(g)}$, and~hence $[g,x] = 1$ if~and~only if~$\Delta(g) = 1$.

6.\hspace{1ex}Consider the~quotient group $\overline{G} = G/K$ and~the~labeled graph~$\mathcal{M}(\Gamma)$ that is~obtained from~$\mathcal{L}(\Gamma)$ by~assigning to~each $v \in V$ the~label $\mu(v) = [G_{v}:K]$. It~is~easy to~see that $\overline{G}$~is~isomorphic to~$\pi_1(\mathcal{M}(\Gamma))$ and~the~vertex groups under~this isomorphism correspond to~the~quotient groups~$G_{v}/K$ ($v \in V$). Since $K \leqslant \ker\tau$, the~mapping $\bar\tau\colon \overline{G} \to \operatorname{Im}\tau$ taking the~coset~$gK$ ($g \in G$) to~$g\tau$ is~well defined and~is a~surjective homomorphism. It~follows from~the~equalities $\ker\tau \cap G_{v} = K$ ($v \in V$) that $\ker\bar\tau \cap G_{v}/K = 1$ for~all $v \in V$. Therefore, by~Proposition~\ref{proposition302}, $\ker\bar\tau$ is~a~free group. It~also easily follows from~the~definition of~$\bar\tau$ that the~preimage of~$\ker\bar\tau$ under~the~natural homomorphism $G \to \overline{G}$ coincides with~$\ker\tau$. Thus, $\ker\tau$ is~an~extension of~$K$ by~the~free group~$\ker\bar\tau$.

7.\hspace{1ex}Let $H = \bigcap_{v \in V} G_{v}$. By~\cite[Lemma~5]{DelgadoRobinsonTimm2017}, $C(G) \leqslant H$. Let us show that, for~every element $h \in H \setminus K$, there exists an~element $g \in G$ such that $g^{-1}hg \notin H$. This will mean that $K$ is~the~largest subgroup of~$H$ normal in~$G$ and~therefore $K = C(G)$.

Let $h \in H \setminus K$ be~an~arbitrary element. Then $h \notin H_{\varepsilon e}$ for~some $e \in E \setminus E_{T}$, $\varepsilon = \pm 1$. Indeed, it~is~obvious if~$\Gamma$ contains only one vertex and~so $E_{T} = \varnothing$. Suppose that $E_{T} \ne \varnothing$. Then every vertex of~$\Gamma$ is~incident to~some $e \in E_{T}$. For~any $e \in E_{T}$, the~equalities $H_{+e} = G_{e(1)} \cap G_{e(-1)} = H_{-e}$ hold in~$G$, as~already noted in~the~proof of~Proposition~\ref{proposition403}. Therefore, $H = \bigcap_{e \in E_T,\,\varepsilon = \pm 1} H_{\varepsilon e}$, and~$h$ possesses the~desired property.

Let us consider $G$ as~an~HNN-ex\-ten\-sion with~the~stable letter~$t_{e}$. Then the~element $t_{e}^{-\varepsilon}ht_{e}^{\varepsilon}$ has a~reduced form of~length~$2$ and~hence cannot belong to~$H$ contained in~the~base group of~this HNN-ex\-ten\-sion. Thus, $t_{e}^{\varepsilon}$ is~the~required element.
\end{proof}

\section{Proofs of~Theorems~\ref{theorem03},~\ref{theorem04} and~Corollaries~\ref{corollary01}--\ref{corollary03}}\label{section05}

\begin{proposition}\label{proposition501}
Let $G$ be~non-el\-e\-men\-tary, $T$ be~a~maximal subtree of~$\Gamma$, and~$\mathcal{L}(\Gamma)$ be~$T$\nobreakdash-posi\-tive. Let also $K = \bigcap_{e \in E,\,\varepsilon = \pm 1} H_{\varepsilon e}$ and~$\mu$ be~the~least common multiple of~$\mu(v) = [G_{v}:K]$ \textup{(}$v \in V$\textup{)}. Finally, let $Q$ be~the~subring of~$\mathbb{Q}$ generated by~$\operatorname{Im}\Delta$, $Q^{+}$ be~the~additive group of~$Q$, $A$ be~a~free abelian group with~the~basis $\{a_{q} \mid q \in \operatorname{Im}\Delta\}$, and~$X$ be~the~splitting extension of~$Q^{+}$ by~$A$ such that the~automorphism $\widehat{a_{q}}|_{Q^{+}}$ acts as~multiplication by~$q$. Then the~mapping of~the~generators of~$G$ to~$X$ given by~the~rule
$$
g_{v} \mapsto \mu/\mu(v) \quad (v \in V), \quad\quad
t_{e} \mapsto a_{\Delta(t_{e})} \quad (e \in E \setminus E_{T}),
$$
defines a~homomorphism of~$G$ into~$X$.
\end{proposition}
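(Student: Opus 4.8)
To verify that the given map on generators extends to a homomorphism $G \to X$, the plan is to check that every defining relation of $G$ is respected. Recall that $G = \pi_1(\mathcal{L}(\Gamma))$ has two families of relations: the tree relations $g_{e(1)}^{\lambda(+e)} = g_{e(-1)}^{\lambda(-e)}$ for $e \in E_T$, and the HNN relations $t_e^{-1} g_{e(1)}^{\lambda(+e)} t_e = g_{e(-1)}^{\lambda(-e)}$ for $e \in E \setminus E_T$. Since $X$ is the split extension of $Q^+$ by $A$, it is convenient to write elements of $X$ multiplicatively as pairs, with $Q^+$ the normal (additive) part on which $A$ acts by multiplication: conjugation by $a_q$ sends $r \in Q^+$ to $qr$. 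Under the proposed map, each $g_v$ lands in $Q^+$ (at the element $\mu/\mu(v)$), and each $t_e$ lands in the basis element $a_{\Delta(t_e)}$ of $A$.

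First I would check the tree relations. Because each $g_v$ maps into the abelian group $Q^+$, the relation $g_{e(1)}^{\lambda(+e)} = g_{e(-1)}^{\lambda(-e)}$ becomes the additive identity
\begin{equation*}
\lambda(+e)\cdot \frac{\mu}{\mu(e(1))} = \lambda(-e)\cdot \frac{\mu}{\mu(e(-1))}
\end{equation*}
in $Q^+$. This is exactly the content of Statement~4 of Proposition~\ref{proposition404}, which (for $T$-positive $\mathcal{L}(\Gamma)$) gives $\lambda(+e)/\mu(e(1)) = \lambda(-e)/\mu(e(-1))$ for every $e \in E_T$; multiplying through by $\mu$ yields the displayed equality. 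So the tree relations hold essentially for free, granted that proposition.

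Next I would handle the HNN relations, which is where the action of $A$ on $Q^+$ enters. Applying the map, the left-hand side $t_e^{-1} g_{e(1)}^{\lambda(+e)} t_e$ becomes $a_{\Delta(t_e)}^{-1}\bigl(\lambda(+e)\cdot \mu/\mu(e(1))\bigr)a_{\Delta(t_e)}$, which by the defining action equals $\Delta(t_e)^{-1}\cdot \lambda(+e)\cdot \mu/\mu(e(1))$ in $Q^+$; the right-hand side is $\lambda(-e)\cdot \mu/\mu(e(-1))$. (Here one must note that $\Delta(t_e) \in \operatorname{Im}\Delta$, so $a_{\Delta(t_e)}$ is indeed a basis element of $A$ and its action on $Q^+$ is multiplication by $\Delta(t_e)$.) The required identity is therefore
\begin{equation*}
\frac{\lambda(+e)}{\Delta(t_e)\,\mu(e(1))} = \frac{\lambda(-e)}{\mu(e(-1))}.
\end{equation*}
The main obstacle is establishing this last equation, since Proposition~\ref{proposition404}(4) only addresses tree edges. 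I expect to prove it by unwinding the definition of the modular homomorphism on the stable letter $t_e$: since $K \leqslant H_{\pm e}$ and $g_{e(\varepsilon)}$ generates $G_{e(\varepsilon)}$ with $[G_{e(\varepsilon)}:H_{\varepsilon e}] = \lambda(\varepsilon e)$ (using $T$-positivity and reducedness as appropriate), the generator $x$ of $K$ satisfies $x = g_{e(1)}^{\,\lambda(+e)\,[H_{+e}:K]}$ up to sign, and conjugation by $t_e$ relates the two edge subgroups. Computing $\Delta(t_e)$ from $t_e^{-1} x^{\,?} t_e = x^{\,?}$ via these index identities should yield $\Delta(t_e) = \lambda(-e)\mu(e(1)) / \bigl(\lambda(+e)\mu(e(-1))\bigr)$ — equivalently $\Delta(t_e)\,\lambda(+e)/\mu(e(1)) = \lambda(-e)/\mu(e(-1))$ — which rearranges to the needed equation. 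Once both relation families are verified, von Dyck's theorem guarantees the map extends to a homomorphism $G \to X$, completing the proof.
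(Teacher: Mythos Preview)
Your overall strategy is exactly the paper's: check the tree relations via Proposition~\ref{proposition404}(4), and for the HNN relations compute $\Delta(t_e)$ explicitly in terms of the labels and the indices $\mu(v)$ and then verify the image relation in~$X$. The tree-edge part is fine.

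There is, however, a sign slip in your HNN computation. The convention fixed by the statement (and confirmed by the presentation of~$X$ written out later in the paper, where $a_{-1}^{-1}xa_{-1}=x^{-1}$) is that $a_q^{-1}\,r\,a_q = qr$ for $r\in Q^+$. Hence
\[
a_{\Delta(t_e)}^{-1}\Bigl(\lambda(+e)\,\frac{\mu}{\mu(e(1))}\Bigr)a_{\Delta(t_e)}
= \Delta(t_e)\cdot\lambda(+e)\,\frac{\mu}{\mu(e(1))},
\]
not $\Delta(t_e)^{-1}$ times that quantity. So the equation you actually need is
\[
\Delta(t_e)\,\frac{\lambda(+e)}{\mu(e(1))}=\frac{\lambda(-e)}{\mu(e(-1))},
\]
and this \emph{is} precisely your computed value $\Delta(t_e)=\lambda(-e)\mu(e(1))/\bigl(\lambda(+e)\mu(e(-1))\bigr)$. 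With your inverted action the ``needed equation'' you wrote, $\lambda(+e)/\bigl(\Delta(t_e)\mu(e(1))\bigr)=\lambda(-e)/\mu(e(-1))$, does \emph{not} rearrange to that formula---the two differ by replacing $\Delta(t_e)$ with its reciprocal, so your final ``rearranges to the needed equation'' step fails unless $\Delta(t_e)=\pm 1$. Once you correct the direction of conjugation, the argument goes through and coincides with the paper's proof (which computes $\Delta(t_e)$ by conjugating the common elliptic element $g=g_{e(1)}^{\mu(e(1))}=g_{e(-1)}^{\mu(e(-1))}$ supplied by Proposition~\ref{proposition404}(4)).
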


\begin{proof}
We extend the~indicated mapping of~the~generators to~the~mapping of~words~$\sigma$ and~show that the~latter takes all the~defining relations of~$G$ into~the~equalities valid in~$X$.

If $e$ is~an~edge of~$T$, then $\lambda(+e)/\mu(e(1)) = \lambda(-e)/\mu(e(-1))$ by~Proposition~\ref{proposition404}~and
$$
g_{e(1)}^{\lambda(+e)}\sigma = \lambda(+e)\mu/\mu(e(1)) = \lambda(-e)\mu/\mu(e(-1)) = g_{e(-1)}^{\lambda(-e)}\sigma.
$$

Let $e \in E$ be~an~edge that does not~belong to~$T$. By~Proposition~\ref{proposition404}, the~equality $g_{e(1)}^{\mu(e(1))} = g_{e(-1)}^{\mu(e(-1))}$ holds in~$G$; we denote this element by~$g$ for~brevity. Since $g$ is~elliptic~and
$$
t_{\vphantom{(}e}^{\vphantom{(}-1}g^{\vphantom{(}\lambda(+e)\mu/\mu(e(1))}_{\vphantom{(}}t_{\vphantom{(}e}^{\vphantom{(}} = t_{\vphantom{(}e}^{\vphantom{(}-1}g_{e(1)}^{\lambda(+e)\mu}t_{\vphantom{(}e}^{\vphantom{(}} = g_{e(-1)}^{\lambda(-e)\mu} = g^{\vphantom{(}\lambda(-e)\mu/\mu(e(-1))}_{\vphantom{(}},
$$
then $\Delta(t_{e}) = \big(\lambda(-e)\mu/\mu(e(-1))\big) \big/ \big(\lambda(+e)\mu/\mu(e(1))\big)$. This implies that
$$
\left(t_{\vphantom{(}e}^{\vphantom{(}-1}g_{e(1)}^{\lambda(+e)}t_{\vphantom{(}e}^{\vphantom{(}}\right)\kern-2pt\sigma = \big(\lambda(+e)\mu/\mu(e(1))\big) \cdot \Delta(t_{\vphantom{(}e}^{\vphantom{(}}) = \lambda(-e)\mu/\mu(e(-1)) = \left(g_{e(-1)}^{\lambda(-e)}\right)\kern-2pt\sigma.
$$

\vspace{-4.4ex}

\mbox{}
\end{proof}

\vspace{0pt}

\begin{proposition}\label{proposition502}
Let $G$ be~non-el\-e\-men\-tary, $T$ be~a~maximal subtree of~$\Gamma$, and~$\mathcal{L}(\Gamma)$ be~$T$\nobreakdash-posi\-tive. Let also $K = \bigcap_{e \in E,\,\varepsilon = \pm 1} H_{\varepsilon e}$ and~$\mu$ be~the~least common multiple of~$\mu(v) = [G_{v}:K]$ \textup{(}$v \in V$\textup{)}. If~$\operatorname{Im}\Delta = \{1\}$, then $G$ is~an~$(F \times \mathbb{Z})$\nobreakdash-by\nobreakdash-$\mathbb{Z}_{\mu}$\nobreakdash-group, where $F$ is~some~free group \textup{(}i.\;e.~$G$ is~isomorphic to~an~extension of~$F \times \mathbb{Z}$ by~$\mathbb{Z}_{\mu}$\textup{)}. If~$\operatorname{Im}\Delta = \{1,-1\}$, then $G$ is~an~$((F \times \mathbb{Z})$\nobreakdash-by\nobreakdash-$\mathbb{Z}_{\mu})$\nobreakdash-by\nobreakdash-$\mathbb{Z}_{2}$\nobreakdash-group.
\end{proposition}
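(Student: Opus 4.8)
The plan is to realize both cases through a single explicit homomorphism obtained from Proposition~\ref{proposition501}. Write $Q^{+}$, $A$, and $X = Q^{+}\rtimes A$ as in that proposition, and recall that $\sigma\colon G \to X$ sends $g_{v} \mapsto \mu/\mu(v)$ and $t_{e} \mapsto a_{\Delta(t_{e})}$. Since $\operatorname{Im}\Delta \subseteq \{1,-1\}$, the subring $Q$ is just $\mathbb{Z}$, so $Q^{+} = \mathbb{Z}$ and $A$ is free abelian on $\{a_{1}\}$ (if $\operatorname{Im}\Delta = \{1\}$) or on $\{a_{1},a_{-1}\}$ (if $\operatorname{Im}\Delta = \{1,-1\}$), with $a_{q}$ acting on $Q^{+}$ as multiplication by~$q$. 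I would then map $X$ onto a finite group $Y$ by reducing $Q^{+}$ modulo~$\mu$ and collapsing $A$: send the generator of $Q^{+}$ to a generator~$c$ of $\mathbb{Z}_{\mu}$, send $a_{1} \mapsto 1$, and, in the second case, send $a_{-1}$ to an involution~$s$ acting on $\mathbb{Z}_{\mu}$ by inversion. Because multiplication by~$-1$ on $Q^{+}$ is intertwined with inversion on $\mathbb{Z}_{\mu}$, this is a homomorphism onto $Y = \mathbb{Z}_{\mu}$ in the first case and onto $Y = \mathbb{Z}_{\mu}\rtimes\mathbb{Z}_{2}$ in the second. Let $\theta\colon G \to Y$ be the composite of $\sigma$ with this map; explicitly $\theta(g_{v}) = (\mu/\mu(v))c$, while $\theta(t_{e})$ is trivial or equal to~$s$ according to the value of $\Delta(t_{e})$. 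Being a composite of two homomorphisms, $\theta$ needs no separate verification of relations.

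Next I would check that $\theta$ is onto and compute its kernel on the vertex groups. Surjectivity onto the $\mathbb{Z}_{\mu}$-part follows from the coprimality of the set $\{\mu/\mu(v)\}$: for each prime $p \mid \mu$ some $\mu(v)$ carries the full $p$-part of $\mu = \operatorname{lcm}_{v}\mu(v)$, so $p \nmid \mu/\mu(v)$ for that~$v$; in the second case $s \in \operatorname{Im}\theta$ because $\operatorname{Im}\Delta = \{1,-1\}$ forces $\Delta(t_{e}) = -1$ for some~$e$. Writing $L = \ker\theta$, this gives $G/L \cong \mathbb{Z}_{\mu}$ (respectively $G/L \cong \mathbb{Z}_{\mu}\rtimes\mathbb{Z}_{2}$). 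Since $\mu/\mu(v)$ divides~$\mu$, the image $\theta(g_{v})$ has order exactly $\mu(v)$ in $\mathbb{Z}_{\mu}$, whence $L \cap G_{v} = \langle g_{v}^{\mu(v)}\rangle = K$ for every $v \in V$. Proposition~\ref{proposition404} then applies to $\tau = \theta$ and shows that $L$ is an extension of $K \cong \mathbb{Z}$ by a free group~$F$.

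It remains to upgrade this extension to a direct product $L \cong F \times \mathbb{Z}$, and this is the step I expect to be the crux. The key is centrality of~$K$ in~$L$. By Proposition~\ref{proposition404}, the centralizer of~$K$ in~$G$ is $\Delta^{-1}(1)$, so I must show $L \subseteq \Delta^{-1}(1)$. For this I would identify the composite of $\theta$ with the projection $Y \to Y/\mathbb{Z}_{\mu} \cong \mathbb{Z}_{2}$: on generators it sends $g_{v} \mapsto 1$ and $t_{e} \mapsto \Delta(t_{e})$, so it coincides with~$\Delta$ (this is vacuous when $\operatorname{Im}\Delta = \{1\}$, where $\Delta^{-1}(1) = G$). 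Hence $L = \ker\theta \subseteq \ker\Delta = \Delta^{-1}(1)$, and $K$ is central in~$L$. Now $1 \to K \to L \to F \to 1$ is a central extension with $F$ free, so it splits; choosing a section and using centrality yields $L = K \times s(F) \cong F \times \mathbb{Z}$. Finally I would assemble the two cases: when $\operatorname{Im}\Delta = \{1\}$ the isomorphism $G/L \cong \mathbb{Z}_{\mu}$ already gives the claim, while when $\operatorname{Im}\Delta = \{1,-1\}$ the preimage $M = \theta^{-1}(\mathbb{Z}_{\mu})$ is normal in~$G$ with $L \trianglelefteq M$, $M/L \cong \mathbb{Z}_{\mu}$ and $G/M \cong \mathbb{Z}_{2}$, exhibiting $G$ as an $((F \times \mathbb{Z})$\nobreakdash-by\nobreakdash-$\mathbb{Z}_{\mu})$\nobreakdash-by\nobreakdash-$\mathbb{Z}_{2}$\nobreakdash-group.
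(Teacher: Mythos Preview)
Your argument is correct and follows the same overall route as the paper: build $\sigma$ from Proposition~\ref{proposition501}, pass to the finite quotient $Y$ (which the paper writes out by a presentation but is your $\mathbb{Z}_{\mu}$ or $\mathbb{Z}_{\mu}\rtimes\mathbb{Z}_{2}$), check surjectivity via $\gcd_{v}\mu/\mu(v)=1$, read off $\ker\theta\cap G_{v}=K$, and invoke Proposition~\ref{proposition404} to get the $K$-by-free structure and split it using centrality of~$K$. The only real difference is in how centrality of $K$ in $L$ is established: the paper argues directly that any word in $\ker\tau$ must involve an even number of letters $t_{e}^{\pm1}$ with $\Delta(t_{e})=-1$ and hence centralizes~$K$, whereas you observe that the projection $Y\to\mathbb{Z}_{2}$ composed with $\theta$ agrees with $\Delta$ on generators, so $L=\ker\theta\subseteq\Delta^{-1}(1)$, and then cite Proposition~\ref{proposition404}(5) that $\Delta^{-1}(1)$ is exactly the centralizer of~$K$. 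Your packaging is a bit cleaner, but the content is the same.
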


\begin{proof}
Let $Q$, $X$, and~$\sigma\colon G \to X$ be~the~subring, the~group, and~the~homomorphism from~Proposition~\ref{proposition501}, $E_{T}$ be~the~edge set of~$T$. Since $\operatorname{Im}\Delta \subseteq \{1,-1\}$, then $Q = \mathbb{Z}$. Therefore, $X$ has the~presentation $\big\langle x, a_{1};\ [x, a_{1}] = 1 \big\rangle$ if~$\operatorname{Im}\Delta = \{1\}$,~or
$$
\big\langle x, a_{1}^{\vphantom{1}}, a_{-1}^{\vphantom{1}};\ 
[x, a_{1}^{\vphantom{1}}] = [a_{1}^{\vphantom{1}}, a_{-1}^{\vphantom{1}}] = 1,\ 
a_{-1}^{-1}xa_{-1}^{\vphantom{1}} = x^{-1} \big\rangle
$$
if~$\operatorname{Im}\Delta = \{1,-1\}$ (here $x$ denotes the~generator of~the~additive group~$Q^{+}$ of~$Q$ equal to~$1$).

Let $Y$ be~the~group with~the~presentation $\big\langle x;\ x^{\mu} = 1 \big\rangle$ if~$\operatorname{Im}\Delta = \{1\}$,~and
$$
\big\langle x, a_{-1}^{\vphantom{1}};\ x^{\mu} = 1,\ 
a_{-1}^{2} = 1,\ a_{-1}^{-1}xa_{-1}^{\vphantom{1}} = x^{-1} \big\rangle
$$
if~$\operatorname{Im}\Delta = \{1,-1\}$. Obviously, $\sigma$ can be~extended to~a~homomorphism~$\tau$ of~$G$ into~$Y$. Since $\mu$ is~the~least common multiple of~$\mu(v)$ ($v \in V$), the~greatest common divisor of~$\mu/\mu(v)$ ($v \in V$) is~equal to~$1$ and~therefore $x \in \operatorname{Im}\tau$. If~$\Delta(t_{e}) = 1$ for~each edge $e \in E \setminus E_{T}$, then $\operatorname{Im}\Delta = \{1\}$. Hence, if~$\operatorname{Im}\Delta = \{1,-1\}$, then there exists an~edge $e \in E \setminus E_{T}$ such that $\Delta(t_{e}) = - 1$ and~so $t_{e}\tau = a_{-1}$. Therefore, $\operatorname{Im}\tau = Y$.

Since $G_{v}\tau = \langle x^{\mu/\mu(v)} \rangle$, then $\ker\tau \cap G_{v} = G_{v}^{\mu(v)} = K$ for~every $v \in V$ and,~by~Proposition~\ref{proposition404}, $\ker\tau$ is~an~extension of~$K$ by~a~free group. It~is~well known that such an~extension is~splittable, i.\;e.~$\ker\tau = KF$, where $F$ is~a~free subgroup of~$G$ and~$K \cap F = 1$. It~remains to~show that $[K,F] = 1$ and~therefore $\ker\tau = K \times F$.

If $\operatorname{Im}\Delta = \{1\}$, then $K$ is~central in~$G$ by~Proposition~\ref{proposition404}. Let $\operatorname{Im}\Delta = \{1,-1\}$ and\linebreak $g \in \ker\tau$ be~an~arbitrary element. Because $\tau$ takes $t_{e}$ ($e \in E \setminus E_{T}$, $\Delta(t_{e}) = - 1$) to~$a_{-1}$ and~$g_{v}$ ($v \in V$), $t_{e}$ ($e \in E \setminus E_{T}$, $\Delta(t_{e}) = 1$) into~$\langle x \rangle$, the~number of~occurrences of~the~generators of~the~first type and~inverse to~them in~the~record of~$g$ must be~even. Since the~conjugation of~$K$ by~$t_{e}$ ($e \in E \setminus E_{T}$, $\Delta(t_{e}) = - 1$) is~an~automorphism of~this subgroup of~order~$2$ and~all the~elements~$g_{v}$ ($v \in V$), $t_{e}$ ($e \in E \setminus E_{T}$, $\Delta(t_{e}) = 1$) belong to~the~centralizer of~$K$, then $g$ also belongs to~the~centralizer of~$K$. Hence, $K$ is~central in~$\ker\tau$, and~$[K,F] = 1$.
\end{proof}

\begin{proof}[\textup{\textbf{Proof of~Theorem~\ref{theorem03}.}}]
According to~Proposition~\ref{proposition203}, Theorem~\ref{theorem01} is~valid for~the~class~$\mathcal{C}$ from~the~statement of~Theorem~\ref{theorem03}. Therefore, we can further use it~in~the~process of~proof.

1,\hspace{.9ex}2.\hspace{1ex}\textit{Necessity.} Let $e \in E$ be~an~arbitrary edge. If~$e$ is~not~a~loop, then, by~Proposition~\ref{proposition301}, $G$ contains a~subgroup isomorphic~to
$$
P\big(\lambda(+e),\lambda(-e)\big) = \left\langle 
g_{e(1)}^{\phantom{\lambda()}}, g_{e(-1)}^{\phantom{\lambda()}};\ 
g_{e(1)}^{\lambda(+e)} = g_{e(-1)}^{\lambda(-e)} \right\rangle.
$$
Since the~labeled graph defining~$G$ is~reduced, then $1 < |\lambda(+e)|, |\lambda(-e)|$. Hence, it~follows from~Proposition~\ref{proposition304} that $\lambda(+e)$ and~$\lambda(-e)$ are $\rho(\mathcal{C})$\nobreakdash-num\-bers. If~$e$ is~a~loop, then, again by~Proposition~\ref{proposition301}, $G$ contains a~subgroup isomorphic to~$\mathrm{BS}(\lambda(+e),\lambda(-e))$. Because $\operatorname{Im}\Delta \subseteq \{1,-1\}$, the~equality $|\lambda(+e)| = |\lambda(-e)|$ holds. Therefore, $\lambda(+e)$ and~$\lambda(-e)$ are $\rho(\mathcal{C})$\nobreakdash-num\-bers as~Theorem~\ref{theorem01} states.

Let $\operatorname{Im}\Delta = \{1,-1\}$. Then, by~Proposition~\ref{proposition401}, $G$ contains a~subgroup isomorphic to~$\mathrm{BS}(1,-1)$, and~Theorem~\ref{theorem01} implies that $2 \in \rho(\mathcal{C})$.

\textit{Sufficiency.} Choose some maximal subtree~$T$ in~$\Gamma$ and~transform the~labeled graph~$\mathcal{L}(\Gamma)$ defining~$G$ to~a~$T$\nobreakdash-posi\-tive form. Obviously, all the~labels are still $\rho(\mathcal{C})$\nobreakdash-num\-bers after this operation. Let $K = \bigcap_{e \in E,\,\varepsilon = \pm 1} H_{\varepsilon e}$ and~$\mu$ be~the~least common multiple of~$\mu(v) = [G_{v}:K]$ ($v \in V$). By~Proposition~\ref{proposition404}, $\mu$ is~a~$\rho(\mathcal{C})$\nobreakdash-num\-ber, and,~by~Proposition~\ref{proposition201}, $\mathbb{Z}_{\mu} \in \mathcal{C}$. If~$2 \in \rho(\mathcal{C})$, then, according to~the~same proposition, $\mathbb{Z}_{2} \in \mathcal{C}$. Proposition~\ref{proposition502} states that, for~some free group~$F$, $G$ is~an~$(F \times \mathbb{Z})$\nobreakdash-by\nobreakdash-$\mathbb{Z}_{\mu}$\nobreakdash-group if~$\operatorname{Im}\Delta = \{1\}$, or~an~$((F \times \mathbb{Z})$\nobreakdash-by\nobreakdash-$\mathbb{Z}_{\mu})$\nobreakdash-by\nobreakdash-$\mathbb{Z}_{2}$\nobreakdash-group if~$\operatorname{Im}\Delta = \{1,-1\}$. Hence, $G$ is~residually a~$\mathcal{C}$\nobreakdash-group by~Proposition~\ref{proposition204}.

3.\hspace{1ex}Since $\operatorname{Im}\Delta \not\subseteq \{1,-1\}$, Propositions~\ref{proposition401} and~\ref{proposition402} imply that $G$ contains a~subgroup isomorphic to~$\mathrm{BS}(m,n)$, where $1 < m < |n|$. This subgroup is~not~residually a~$\mathcal{C}$\nobreakdash-group by~Theorem~\ref{theorem01}. Therefore, $G$ is~also not~residually a~$\mathcal{C}$\nobreakdash-group.
\end{proof}

\begin{proof}[\textup{\textbf{Proof of~Theorem~\ref{theorem04}.}}]
1.\hspace{1ex}Since $\mathcal{C}$ contains at~least one non-pe\-ri\-od\-ic group and~is closed under~taking subgroups and~extensions, it~contains an~infinite cyclic group and~both of~its extensions by~means of~an~infinite cyclic group. Therefore, every elementary GBS-group is~a~tor\-sion-free $\mathcal{C}$\nobreakdash-group.

2.\hspace{1ex}Choose some maximal subtree~$T$ in~$\Gamma$ and~transform the~labeled graph~$\mathcal{L}(\Gamma)$ defining~$G$ to~a~$T$\nobreakdash-posi\-tive form. Since the~fundamental groups of~the~original and~modified labeled graphs are isomorphic, the~subring~$Q$ remains unchanged under~the~indicated transformation and~therefore $Q^{+} \in \mathcal{C}$.

Let $A$, $X$, and~$\sigma\colon G \to X$ be~the~groups and~the~homomorphism from~the~statement of~Proposition~\ref{proposition501}. By~the~definition, $\sigma$ acts injectively on~all the~vertex groups, and~Proposition~\ref{proposition302} says that $\ker\sigma$ is~a~free group.

As~noted above, an~infinite cyclic group belongs to~$\mathcal{C}$. By~the~definition of~root class, $\mathcal{C}$ also contains the~Cartesian product $P = \prod_{z \in \mathbb{Z}} C_{z}$, where $C_{z}$ is~an~infinite cyclic group for~each $z \in \mathbb{Z}$. Since $A$ is~isomorphic to~a~subgroup of~$P$ and~$\mathcal{C}$ is~closed under~taking subgroups and~extensions, then $A$, $X$, and~$\operatorname{Im}\sigma$ belong to~$\mathcal{C}$. Therefore, $G$ is~residually a~$\mathcal{C}$\nobreakdash-group by~Proposition~\ref{proposition204}.

It remains to~note that $Q^{+}$ is~a~homomorphic image of~a~free abelian group of~countable rank, which is~a~subgroup of~$P$. Therefore, if~$\mathcal{C}$ is~closed under~taking quotient groups, then $Q^{+} \in \mathcal{C}$. If~$\operatorname{Im}\Delta \subseteq \{1,-1\}$, then $Q^{+}$ is~an~infinite cyclic group, that belongs to~$\mathcal{C}$ as~noted above.
\end{proof}

\begin{proof}[\textup{\textbf{Proof of~Corollaries~\ref{corollary01}\nobreakdash--\ref{corollary03}.}}]
Let $\rho$ be~a~non-empty set of~primes. Using the~definition of~root class, it~is~easy to~verify that the~classes of~finite $\rho$\nobreakdash-groups, finite solvable $\rho$\nobreakdash-groups, periodic $\rho$\nobreakdash-groups of~finite exponent, periodic solvable $\rho$\nobreakdash-groups of~finite exponent, and~all solvable groups are root. Therefore, the~implications $1 \Rightarrow 5$, $3 \Rightarrow 5$, $5 \Rightarrow 2$, and~$5 \Rightarrow 4$ in~Corollary~\ref{corollary02} follow from~Theorem~\ref{theorem03}, the~implications $1 \Rightarrow 3$ and~$3 \Rightarrow 2$ in~Corollary~\ref{corollary01} do from~Theorems~\ref{theorem01} and~\ref{theorem03}, and~Corollary~\ref{corollary03} follows from~Theorem~\ref{theorem04}. The~implications $4 \Rightarrow 3$, $2 \Rightarrow 1$ in~Corollary~\ref{corollary02} and~$2 \Rightarrow 1$ in~Corollary~\ref{corollary01} are obvious.
\end{proof}

\section{An~algorithm for~verifying the~condition of~Theorem~\ref{theorem05}}\label{section06}

Let $E^{*}$ be~the~set of~paths in~$\Gamma$. We define a~function $\xi\colon E^{*} \to \{1,-1\}$ as~follows. If~$e \in E$, then $\xi(e) = \operatorname{sign}\lambda(+e)\lambda(-e)$. If~$s = (e_{1}, e_{2}, \ldots, e_{n})$ is~a~path in~$\Gamma$, then $\xi(s) = \prod_{i = 1}^{n} \xi(e_{i})$. In~particular, if~the~length of~$s$ is~equal to~$0$, then $\xi(s) = 1$.

The~algorithm given below assigns labels to~the~vertices of~$\Gamma$. The~label corresponding to~a~vertex~$v$ is~denoted by~$\zeta(v)$ and~can be~equal to~$\pm 1$. Initially, all the~vertices are unlabeled.

\enlargethispage{1pt}

\begin{algorithm1}
1.\hspace{1ex}If the~graph has no~labeled vertices, take an~arbitrary vertex~$v$ of~$\Gamma$. Otherwise, choose some vertex~$v$, which has no~label and~is~adjacent to~one of~the~labeled vertices.

2.\hspace{1ex}If there is~a~loop~$e$ at~$v$ such that $\xi(e) = -1$, then the~algorithm terminates without labeling~$v$.

3.\hspace{1ex}Let $E_{v}$ be~the~set of~edges of~$\Gamma$, each of~which connects~$v$ with~some of~the~already labeled vertices, and~let, for~any $e \in E_{v}$, $\varepsilon_{e}$ denote the~number that is~equal to~$\pm 1$ and~satisfies the~relation $e(\varepsilon_{e}) = v$.

3.1.\hspace{1ex}If there exist $e_{1}$,~$e_{2} \in E_{v}$ such that $\xi(e_{1})\zeta(e_{1}(-\varepsilon_{e_1})) \ne \xi(e_{2})\zeta(e_{2}(-\varepsilon_{e_2}))$, then the~algorithm terminates without labeling~$v$.

3.2.\hspace{1ex}Otherwise, we define $\zeta(v)$ as~follows: $\zeta(v) = 1$ if~$E_{v} = \varnothing$, and~$\zeta(v) = \xi(e)\zeta(e(-\varepsilon_{e}))$ if~$E_{v} \ne \varnothing$ and~$e$ is~some edge from~$E_{v}$ (the~independence of~$\zeta(v)$ from~the~choice of~$e$ is~ensured by~Step~3.1).

4.\hspace{1ex}If all the~vertices of~$\Gamma$ are labeled, then the~algorithm terminates; otherwise, it~returns to~Step~1.
\end{algorithm1}

\begin{proposition}\label{proposition601}
\textup{1.}\hspace{1ex}If $\xi(s) = 1$ for~any closed path $s$ in~$\Gamma$, then the~above algorithm terminates by~labeling all the~vertices of~$\Gamma$ for~any sequence of~vertex selection at~Step~\textup{1}.

\textup{2.}\hspace{1ex}If, for~some sequence of~vertex selection at~Step~\textup{1}, the~above algorithm terminates by~labeling all the~vertices of~$\Gamma$, then $\xi(s) = 1$ for~any closed path~$s$ in~$\Gamma$.
\end{proposition}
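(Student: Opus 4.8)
The plan is to recognize $\xi$ as a homomorphism-like structure tracking sign products along paths, and to exploit the fact that the algorithm essentially attempts to construct a consistent $\{1,-1\}$-valued labeling $\zeta$ that is compatible with $\xi$ on every edge. The key observation I would make precise first is the following invariant: whenever the algorithm successfully labels vertices, then for any two labeled vertices $u$ and $w$ joined by a path $s$ lying entirely among labeled vertices, we have $\zeta(u)\zeta(w) = \xi(s)$. This should follow by induction on the construction, since Step~3.2 defines $\zeta(v) = \xi(e)\zeta(e(-\varepsilon_e))$ precisely so that $\zeta$ changes by the edge-sign across each edge, and Step~3.1 guarantees consistency when $v$ is joined to the labeled part by more than one edge.

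For Statement~1, I would argue the contrapositive is unnecessary and proceed directly: assume $\xi(s)=1$ for every closed path $s$, and show the algorithm never terminates at Steps~2 or~3.1. For Step~2, a loop $e$ at $v$ with $\xi(e)=-1$ is itself a closed path of odd sign, contradicting the hypothesis. For Step~3.1, suppose $e_1,e_2 \in E_v$ with the two relevant quantities unequal; then combining the path through $e_1$ to $e_1(-\varepsilon_{e_1})$, a path among already-labeled vertices back to $e_2(-\varepsilon_{e_2})$, and the edge $e_2$ back to $v$, I form a closed path whose $\xi$-value equals $\xi(e_1)\xi(e_2)$ times the sign accumulated among labeled vertices; using the invariant $\zeta(e_1(-\varepsilon_{e_1}))\zeta(e_2(-\varepsilon_{e_2})) = \xi(\text{labeled path})$, the inequality in Step~3.1 translates exactly into this closed path having $\xi = -1$, again a contradiction. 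Since the graph is connected and finite and Step~1 always finds an adjacent unlabeled vertex, every vertex gets labeled.

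For Statement~2, assume the algorithm terminates with all vertices labeled. I would let $s$ be an arbitrary closed path based at a vertex $v_0$ and show $\xi(s)=1$ by using the invariant applied to the fully labeled graph. Concretely, for each edge $e$ traversed in $s$ from $e(\delta)$ to $e(-\delta)$, the completed labeling satisfies $\zeta(e(\delta))\zeta(e(-\delta)) = \xi(e)$, which is guaranteed for tree edges directly by Step~3.2 and for the remaining (non-tree) edges by the passage through Step~3.1 without termination. Multiplying these relations along the closed path, the $\zeta$-values telescope (each vertex appearing as an interior endpoint contributes its $\zeta$-value squared, which is $1$) and the product of the edge-signs $\xi(e)$ equals $\xi(s)$; the telescoping leaves $\zeta(v_0)^2 = 1$ on the left, forcing $\xi(s)=1$.

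The main obstacle I anticipate is making the invariant airtight and correctly handling the non-tree edges. Step~3.2 only defines $\zeta(v)$ using \emph{one} edge of $E_v$, so the compatibility $\zeta(e(\delta))\zeta(e(-\delta))=\xi(e)$ for the \emph{other} edges of $E_v$ is not a definition but a consequence of Step~3.1 having been passed; I must verify that the condition tested in Step~3.1, namely $\xi(e_1)\zeta(e_1(-\varepsilon_{e_1})) = \xi(e_2)\zeta(e_2(-\varepsilon_{e_2}))$, is exactly what guarantees $\zeta(v) = \xi(e)\zeta(e(-\varepsilon_e))$ holds simultaneously for all $e \in E_v$ once $\zeta(v)$ is assigned. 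Care is also needed because a closed path may traverse edges and vertices repeatedly and in both orientations, so I would phrase the telescoping argument in terms of the multiset of edge traversals rather than assuming the path is simple, and use $\xi(e)^2 = 1$ and $\zeta(v)^2 = 1$ freely to discard repetitions.
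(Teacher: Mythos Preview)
Your proposal is correct. For Statement~1 your argument is essentially the paper's: the paper records the same invariant in the specialized form $\zeta(v)=\xi(s)$ for a path $s$ from the first-labeled vertex to~$v$, and builds the same closed cycle through $e_1$, $e_2$ and two such paths to reach a contradiction at Step~3.1.

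For Statement~2 you take a genuinely shorter route. The paper argues by induction on the number of iterations needed to label all vertices of a given closed path: it isolates the last-labeled vertex~$v$ on the path, routes both neighbours of~$v$ back to the first-labeled vertex via auxiliary paths $s_1,s_2$, and applies the inductive hypothesis to the resulting closed path whose vertices are labeled in fewer iterations. Your telescoping argument bypasses this induction entirely: once the edge relation $\zeta(e(1))\zeta(e(-1))=\xi(e)$ is established for every edge (loops via Step~2, non-loops via Step~3.1/3.2 at whichever endpoint is labeled second), the product over any closed walk collapses to~$1$ immediately, and repeated vertices or edges cause no difficulty. This is cleaner and avoids the path decomposition the paper performs. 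One small correction to your phrasing: the ``tree vs.\ non-tree'' dichotomy is not the relevant split---what matters is the single edge of~$E_v$ used in Step~3.2 versus the remaining edges of~$E_v$, exactly as you identify in your final paragraph.
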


\begin{proof}
First of~all, we note that if~$v$ is~some vertex of~$\Gamma$ labeled by~the~algorithm and~$u$ is~the~vertex that was labeled first, then $\Gamma$ contains a~path~$s$ from~$u$ to~$v$ consisting of~vertices labeled no later than~$v$, and~besides $\zeta(v) = \xi(s)$. This is~not~difficult to~show using induction on~the~number of~steps of~the~algorithm.

1.\hspace{1ex}Let us fix some sequence of~vertex selection at~Step~1 and~consider an~arbitrary vertex~$v$ from~this sequence. If~there is~a~loop~$e$ at~$v$, then it~is~a~closed path and~therefore $\xi(e) = 1$. Let $E_{v} \ne \varnothing$, $e_{1}, e_{2} \in E_{v}$ be~arbitrary edges, $v_{1} = e_{1}(-\varepsilon_{e_1})$, $v_{2} = e_{2}(-\varepsilon_{e_2})$, and~$u$ be~the~first vertex labeled by~the~algorithm. Then there exist paths $s_{1}$,~$s_{2}$ connecting $v_{1}$,~$v_{2}$ with~$u$ and~such that $\zeta(v_{1}) = \xi(s_{1})$, $\zeta(v_{2}) = \xi(s_{1})$. Let $s$ be~the~path composed of~the~paths $s_{1}$,~$s_{2}$ and~the~edges $e_{1}$,~$e_{2}$. This path is~closed, so $1 = \xi(s) = \xi(s_{1})\xi(s_{2})\xi(e_{1})\xi(e_{2})$ and~$\xi(e_{1})\zeta(v_{1}) = \xi(e_{1})\xi(s_{1}) = \xi(e_{2})\xi(s_{2}) = \xi(e_{2})\zeta(v_{2})$. Thus, the~algorithm terminates neither at~Step~2 nor~at~Step~3.1, and~the~vertex~$v$ is~among the~labeled ones. Since it~is~chosen arbitrarily, this means that the~algorithm labels all the~vertices of~$\Gamma$.

2.\hspace{1ex}Since the~algorithm labels all the~vertices of~the~graph without terminating at~Step~2, then $\xi(e) = 1$ for~every loop $e \in E$ and~further we can consider only closed paths that do not~contain loops. We argue by~induction on~the~number~$n$ of~iterations (Steps~1\nobreakdash--4) required for~the~algorithm to~label all the~vertices of~a~path of~the~indicated~form.

Let $s$ be~a~closed path without loops, the~vertices of~which are labeled in~$n$ iterations. At~each iteration of~the~algorithm, no more than one vertex is~labeled. Therefore, if~$n = 1$, then the~length of~$s$ is~equal to~$0$ and~the~equality $\xi(s) = 1$ is~obvious. Further, we assume that $s$ is~of~non-zero length (so $n > 1$) and~$\xi$ has the~required value for~every closed path whose vertices are labeled in~at most $n - 1$ iterations.

Let $v$ be~the~last labeled vertex of~$s$. If~necessary, we split $s$ into~the~closed parts, each of~which begins and~ends at~$v$, and~assume that $s$ passes through~$v$ only once. Then the~fragment $(v_{1}, e_{1}, v, e_{2}, v_{2})$ of~$s$ is~uniquely defined, where $e_{1}$,~$e_{2}$ are edges (which may coincide) and~$v_{1}$,~$v_{2}$ are vertices (which may also coincide). Since there are no loops in~$s$, the~relations $v_{1} \ne v \ne v_{2}$ and~$e_{1}, e_{2} \in E_{v}$ hold.

Let $u$ be~the~first vertex labeled by~the~algorithm. Then there exist paths $s_{1}$,~$s_{2}$ connecting $v_{1}$,~$v_{2}$ with~$u$, consisting of~the~vertices labeled no later than $v_{1}$,~$v_{2}$ respectively, and~such that $\zeta(v_{1}) = \xi(s_{1})$, $\zeta(v_{2}) = \xi(s_{2})$. Let $s_{0}$ denote the~path obtained from~$s$ by~removing~$v$ and~$e_{1}$,~$e_{2}$. Then the~union~$s_{3}$ of~the~paths $s_{0}$, $s_{1}$, $s_{2}$ is~a~closed path, all the~vertices of~which are labeled in~at most $n - 1$ iterations, and,~by~the~inductive hypothesis, $1 = \xi(s_{3}) = \xi(s_{0})\xi(s_{1})\xi(s_{2})$. Since $v$ is~labeled by~the~algorithm, then the~condition of~Step~3.1 cannot be~satisfied and~so $\xi(e_{1})\zeta(v_{1}) = \xi(e_{2})\zeta(v_{2})$. Hence, $\xi(e_{1})\xi(e_{2}) = \zeta(v_{1})\zeta(v_{2}) = \xi(s_{1})\xi(s_{2})$, and~therefore $\xi(s) = \xi(s_{0})\xi(e_{1})\xi(e_{2}) = \xi(s_{0})\xi(s_{1})\xi(s_{2}) = 1$, as~required.
\end{proof}

\begin{proposition}\label{proposition602}
{\parfillskip=0pt
Let $G$ be~not~solvable, $\mathcal{L}(\Gamma)$ be~reduced, $\operatorname{Im}\Delta = \{1,-1\}$, and~all the~labels $\lambda(\varepsilon e)$ \textup{(}$e \in E$, $\varepsilon = \pm 1$\textup{)} are $p$\nobreakdash-num\-bers for~some prime number $p \ne 2$. Let~also
$$
E^{\prime} = \{e \in E \mid H_{+e} \ne C(G) \ne H_{-e}\}.
$$\par}

\textup{1.}\hspace{1ex}If every elliptic element of~$G$ that~is conjugate to~its inverse belongs to~$C(G)$, then $\xi(s) = 1$ for~every closed path~$s$ in~$\Gamma$ all of~whose edges are contained in~$E^{\prime}$.

\textup{2.}\hspace{1ex}If $\xi(s) = 1$ for~every closed path~$s$ in~$\Gamma$ all of~whose edges are contained in~$E^{\prime}$, then every elliptic element of~$G$ that~is conjugate to~its inverse belongs to~$C(G)$ and~the~quotient group~$G/C(G)$ is~residually a~finite $p$\nobreakdash-group.
\end{proposition}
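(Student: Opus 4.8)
The plan is to base everything on a single geometric computation together with the identification $C(G)=K=\bigcap_{e\in E,\,\varepsilon=\pm1}H_{\varepsilon e}$ furnished by Proposition~\ref{proposition404}(7). First I would fix a maximal subtree $T$, pass to a $T$-positive form, and attach to every path $s=(e_{1},\dots,e_{n})$ a \emph{holonomy element} $\gamma_{s}\in G$ obtained by traversing $s$, inserting $t_{e}^{\pm1}$ at each non-tree edge and staying inside the tree product on the tree edges. Since in a $T$-positive graph the tree labels are positive while $\Delta(t_{e})=\operatorname{sign}\bigl(\lambda(+e)\lambda(-e)\bigr)=\xi(e)$ for a non-tree edge (this is exactly the sign computed in the proof of Proposition~\ref{proposition501}), one gets $\Delta(\gamma_{s})=\xi(s)$; in particular, if $x$ generates $K$ then $\gamma_{s}^{-1}x\gamma_{s}=x^{\xi(s)}$. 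The second ingredient I would record is that, because $\operatorname{Im}\Delta\subseteq\{1,-1\}$, all the edge subgroups lie in one common infinite cyclic group containing $K$, and since every label is a $p$-number the indices $[H_{+e}:K]=[H_{-e}:K]$ are powers of $p$; hence the edge subgroups are \emph{pairwise comparable}, and along any finite path the intersection of the edge subgroups met is the smallest of them.

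For Statement~1 I would argue by contraposition. Given a closed path $s$ with all edges in $E'$ and $\xi(s)=-1$, each traversed edge subgroup strictly contains $K$ (by definition of $E'$), so by comparability their intersection along $s$ is an infinite cyclic group $\langle y\rangle$ with $y\notin K=C(G)$. Reading the edge relations of $G$ along $s$ shows that the holonomy conjugates $y$ to $y^{\xi(s)}=y^{-1}$, so $y$ is an elliptic element conjugate to its inverse but outside $C(G)$, contradicting the hypothesis. Here the role of the $p$-number assumption is decisive: it is what forces the edge subgroups to be nested, so that the intersection along the loop does not collapse to $K$.

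Statement~2 has two halves. For the assertion that every elliptic element conjugate to its inverse lies in $C(G)$ I would again use contraposition: if $y$ is elliptic, $y\notin K$, and $g^{-1}yg=y^{-1}$, then $\Delta(g)=-1$, and writing $g$ in reduced form produces a closed path $s$ based at the vertex carrying $y$ with $\xi(s)=\Delta(g)=-1$. The point is that $y\notin K$ must be carried \emph{inside} the edge subgroups at every crossing, for otherwise Britton's lemma (respectively the normal form theorem for amalgams) would keep $g^{-1}yg$ at length $>0$ and prevent it from equalling the elliptic element $y^{-1}$ in the base group; consequently every edge of $s$ has both its edge subgroups strictly larger than $K$, i.e.\ $s$ lies in $E'$, against the hypothesis that $\xi=1$ on such loops. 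I expect this normal-form bookkeeping --- tracking that a non-central $y$ can never ``shortcut'' across an edge whose subgroup equals $K$ --- to be the main obstacle of the whole proposition.

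For the residual statement I would pass to $\overline{G}=G/C(G)=\pi_{1}(\mathcal{M}(\Gamma))$, a graph of finite cyclic $p$-groups with vertex orders $\mu(v)$ that are powers of $p$. Using $\xi(s)=1$ on $E'$-loops, the algorithm of Section~\ref{section06} (Proposition~\ref{proposition601}, applied to each connected component of the subgraph with edge set $E'$) produces a vertex labelling $\zeta\colon V\to\{1,-1\}$, and suitable admissible sign changes then make every essential edge ($e\in E'$) untwisted, while each non-essential edge has a trivial edge group on at least one side in $\overline{G}$. From this I would build a homomorphism of $\overline{G}$ onto a finite $p$-group into which every vertex group injects: the untwisted $E'$-gluings of cyclic $p$-groups embed consistently (this is where $p\ne 2$ is used, via Proposition~\ref{proposition206}, since an inversion could not be realised inside a $p$-group), and the non-essential edges impose no constraint. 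The kernel of this map meets every vertex group trivially, hence is free by Proposition~\ref{proposition302}; since free groups are residually finite $p$ and the quotient is a finite $p$-group, Proposition~\ref{proposition204} lets me separate any nontrivial element of $\overline{G}$ in a finite $p$-group, giving the residual $p$-finiteness of $G/C(G)$.
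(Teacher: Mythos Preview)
Your overall plan is sound and lands close to the paper's argument, but there is one structural difference worth pointing out and one imprecision to fix.

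For Statement~1, the phrase ``all the edge subgroups lie in one common infinite cyclic group containing $K$'' is not literally correct: $H_{\varepsilon e}$ and $H_{\varepsilon' e'}$ for distinct edges sit in different vertex groups and are not directly comparable. What is true, and what you need, is that within each $G_v$ the relevant edge subgroups and $K$ are nested (being $p$\nobreakdash-power index subgroups of an infinite cyclic group). The paper makes your idea concrete by choosing at every vertex the single element $g_v' = g_v^{\mu(v)/p}$ and checking directly that, for each $e\in E'$, the edge relation gives $g_{e(1)}'\sim_G (g_{e(-1)}')^{\xi(e)}$ (this uses $p\mid k_e$, i.e.\ exactly the $E'$\nobreakdash-condition); a closed $E'$\nobreakdash-path with $\xi(s)=-1$ then yields $g_v'\sim_G (g_v')^{-1}$ with $g_v'\notin C(G)$. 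This is your ``smallest edge subgroup'' idea made uniform across vertices.

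For Statement~2 you run the two halves in the opposite order from the paper, and the order matters for economy. You identify the direct Britton/normal-form argument for the elliptic-element claim as ``the main obstacle of the whole proposition''; the paper bypasses that obstacle entirely. It first builds (essentially your construction) a homomorphism $\sigma\colon G\to\mathbb{Z}_\mu$ by setting $g_w\mapsto\xi(s)\,\mu/\mu(w)$ on each connected component of $(V,E')$---the hypothesis on closed $E'$\nobreakdash-paths is precisely what makes this well-defined---and verifying the edge relations (edges outside $E'$ are harmless because both sides map to $\pm\mu\equiv 0$). Since $\ker\sigma\cap G_v=C(G)$ for every $v$, Proposition~\ref{proposition404}(6) makes $G/C(G)$ free-by-(finite $p$\nobreakdash-group), hence residually a finite $p$\nobreakdash-group by Proposition~\ref{proposition204}. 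The elliptic-element assertion then drops out in one line from Proposition~\ref{proposition206}: with $p\ne 2$, any element conjugate to its inverse dies in every finite $p$\nobreakdash-quotient, so $yC(G)=1$. Your plan for the residual part (sign changes via Proposition~\ref{proposition601}, map to a finite $p$\nobreakdash-group, free kernel via Proposition~\ref{proposition302}) is the same construction in different clothing, so you already have all the pieces; just swap the order and the obstacle you anticipated disappears.
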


\begin{proof}
We fix some maximal subtree~$T$ of~$\Gamma$ and~begin with~a~few remarks concerning both Statement~1 and~Statement~2.

By Proposition~\ref{proposition404},
$$
1 \ne C(G) = \bigcap_{\substack{e \in E,\\ \varepsilon = \pm 1}} H_{\varepsilon e} \leqslant \bigcap_{v \in V} G_{v}
$$
and~the~least common multiple~$\mu$ of~$\mu(v) = [G_{v}:C(G)]$ ($v \in V$) divides the~product $\prod_{e \in E,\,\varepsilon = \pm 1} \lambda(\varepsilon e)$. Therefore, $\mu$ and~all the~indices~$\mu(v)$ ($v \in V$) are $p$\nobreakdash-num\-bers.

If $\Gamma$ contains one vertex~$v$ and~$\mu = \mu(v) = 1$, then $E^{\prime} = \varnothing$, every elliptic element of~$G$ belongs to~$G_{v} = C(G)$, and~$G/C(G)$ is~a~free group, that is~residually a~finite $p$\nobreakdash-group by~Proposition~\ref{proposition204}. Therefore, both Statement~1 and~Statement~2 are true. If~$\Gamma$ has more than one vertex, then each of~its vertices is~incident to~some edge that is~not~a~loop. Since $\mathcal{L}(\Gamma)$ is~reduced, it~follows that every vertex group contains some proper edge subgroup and~so $C(G) \ne G_{v}$ for~all $v \in V$. Thus, further, we can assume that all $\mu(v)$ ($v \in V$) are different from~$1$ and~therefore are divisible by~$p$.

Let $e \in E$ be~an~arbitrary edge. Then $g_{e(1)}^{\lambda(+e)} \sim_{G} g_{e(-1)}^{\lambda(-e)}$ and~$H_{\vphantom{(}+e}^{\vphantom{(}} \sim_{G} H_{\vphantom{(}-e}^{\vphantom{(}}$. Since $C(G)$ is~normal in~$G$, it~follows that $[H_{+e}:C(G)] = [H_{-e}:C(G)] = k_{e}$ for~some $p$\nobreakdash-num\-ber $k_{e} \geqslant 1$ and~$|\lambda(+e)|k_{e} = \mu(e(1))$, $|\lambda(-e)|k_{e} = \mu(e(-1))$.

Let us now turn directly to~the~proof of~Statements~1 and~2.

1.\hspace{1ex}We put $g_{v}^{\prime} = g_{\vphantom{i}v}^{\mu(v)/p}$ ($v \in V$) and~show that, for~any edge $e \in E^{\prime}$, $g_{e(1)}^{\prime}$ and~$(g_{e(-1)}^{\prime})^{\xi(e)}$ are conjugate in~$G$.

Indeed, let $e \in E^{\prime}$ be~an~arbitrary edge. Then $k_{e} \ne 1$ and~so $p \mid k_{e}$. The~relation $g_{e(1)}^{\lambda(+e)} \sim_{G} g_{e(-1)}^{\lambda(-e)}$ implies that $g_{e(1)}^{|\lambda(+e)|} \sim_{G} g_{e(-1)}^{\xi(e)|\lambda(-e)|}$. Hence,
$$
g_{e(1)}^{\vphantom{y}\prime} = g_{e(1)}^{|\lambda(+e)|(k_e/p)} \sim_{G} g_{e(-1)}^{\xi(e)|\lambda(-e)|(k_e/p)} = \big(g_{e(-1)}^{\vphantom{y}\prime}\big)^{\xi(e)}.
$$

Thus, if~$s$ is~a~closed path in~$\Gamma$, all the~edges of~which are contained in~$E^{\prime}$, and~$\xi(s) = -1$, then, for~every vertex~$v$ of~this path, $g_{v}^{\prime} \sim_{G} (g_{v}^{\prime})^{\xi(s)} = (g_{v}^{\prime})^{-1}$. Since $g_{v}^{\prime} \notin C(G)$, Statement~1 is~proved.

2.\hspace{1ex}Let $E_{T}$ denote the~edge set of~$T$. To~prove the~residual $p$\nobreakdash-fi\-nite\-ness of~the~quotient group~$G/C(G)$, we define a~mapping~$\sigma_{0}$ of~the~generators~$g_{v}$ ($v \in V$) and~$t_{e}$ ($e \in E \setminus E_{T}$) of~$G$ to~$\mathbb{Z}_{\mu}$ as~follows.

Let $\Gamma^{\prime} = (V, E^{\prime})$ be~the~graph obtained from~$\Gamma$ by~removing all the~edges not~included in~$E^{\prime}$, and~let $\Gamma^{\prime}_{i} = (V_{i}^{\vphantom{\prime}}, E_{i}^{\prime})$ be~some connected component of~$\Gamma^{\prime}$. Choose an~arbitrary vertex $v \in V_{i}$ and~put $g_{v}\sigma_{0} = \mu/\mu(v)$. If~$w \in V_{i}$ is~an~arbitrary vertex and~$s$ is~a~path in~$\Gamma^{\prime}_{i}$ connecting $v$ and~$w$, we put $g_{w}\sigma_{0} = \xi(s)\mu/\mu(w)$. It~follows from~the~condition of~Statement~2 that, for~any two paths $s_{1}$, $s_{2}$ connecting $v$ and~$w$ in~$\Gamma^{\prime}_{i}$, the~equality $\xi(s_{1}) = \xi(s_{2})$ holds. Therefore, the~above definition is~correct. In~a~similar way, we define the~action of~$\sigma_{0}$ on~the~generators of~the~vertex groups contained in~all other connected components of~$\Gamma^{\prime}$. Let us also put $t_{e}\sigma_{0} = 0$ for~all $e \in E \setminus E_{T}$.

We extend $\sigma_{0}$ to~a~mapping of~words~$\sigma$ and~show that the~latter takes all the~defining relations of~$G$ into~the~equalities valid in~$\mathbb{Z}_{\mu}$.

{\parfillskip=0pt
Let $e \in E$ be~an~arbitrary edge. As~shown earlier, $|\lambda(+e)|k_{e} = \mu(e(1))$ and~$|\lambda(-e)|k_{e} = \mu(e(-1))$, where $k_{e} = [H_{+e}:C(G)] = [H_{-e}:C(G)]$. If~$e \in E^{\prime}$, $v$ is~the~fixed vertex chosen above from~the~connected component of~$\Gamma^{\prime}$, to~which $e$ belongs, and~$s_{1}$, $s_{-1}$ are some paths in~$\Gamma^{\prime}$ connecting $v$ with~$e(1)$, $e(-1)$ respectively, then $\xi(s_{1}) = \xi(s_{-1})\xi(e)$. It~follows that
$$
\xi(s_{1}) \cdot \operatorname{sign}\lambda(+e) = \xi(s_{-1}) \cdot \operatorname{sign}\lambda(-e)
$$
and\hfill\mbox{}
\begin{align*}
g_{e(1)}^{\lambda(+e)}\sigma &= \xi(s_{1})\lambda(+e)\mu/\mu(e(1))\\
&= \xi(s_{1}) \cdot \operatorname{sign}\lambda(+e) \cdot |\lambda(+e)|\mu/\mu(e(1))\\
&= \xi(s_{-1}) \cdot \operatorname{sign}\lambda(-e) \cdot |\lambda(-e)|\mu/\mu(e(-1))\\
&= \xi(s_{-1})\lambda(-e)\mu/\mu(e(-1))\\
&= g_{e(-1)}^{\lambda(-e)}\sigma.
\end{align*}\par}

If~$e \notin E^{\prime}$, then $k_{e} = 1$ and~therefore
$$
g_{e(1)}^{\lambda(+e)}\sigma = \varepsilon|\lambda(+e)|\mu/\mu(e(1)) = \varepsilon\mu \equiv \delta\mu = \delta|\lambda(-e)|\mu/\mu(e(-1)) = g_{e(-1)}^{\lambda(-e)}\sigma \pmod \mu
$$
for~some $\varepsilon, \delta = \pm 1$.

Thus, $\sigma$ defines a~homomorphism of~$G$ into~the~finite $p$\nobreakdash-group~$\mathbb{Z}_{\mu}$. It~follows from~the~definition of~$\sigma$ that, for~each $v \in V$, the~order of~$g_{v}\sigma$ is~equal to~$\mu(v)$ and~therefore $\ker\sigma \cap G_{v} = C(G)$. Hence, according to~Proposition~\ref{proposition404}, $\ker\sigma$ is~an~extension of~$C(G)$ by~a~free group. This implies that the~quotient group~$G/C(G)$ is~an~extension of~the~indicated free group by~a~finite $p$\nobreakdash-group. Such an~extension is~residually a~finite $p$\nobreakdash-group by~Proposition~\ref{proposition204}.

Suppose now that $x$ and~$y$ are elements of~$G$ such that $x^{-1}yx = y^{-1}$. Then
$$
\big(xC(G)\big)^{-1}\big(yC(G)\big)\big(xC(G)\big) = \big(yC(G)\big)^{-1}\kern-4pt,
$$
and~the~residual $p$\nobreakdash-fi\-nite\-ness of~$G/C(G)$ proved above together with~Proposition~\ref{proposition206} and~the~relation $p \ne 2$ imply that $yC(G) = 1$, i.\;e.~$y \in C(G)$. Thus, Statement~2 is~completely proved.
\end{proof}

\begin{algorithm2}
Let $G$ be~not~solvable, $\mathcal{L}(\Gamma)$ be~reduced, and~$\operatorname{Im}\Delta = \{1,-1\}$. Then $C(G) \leqslant \bigcap_{v \in V} G_{v}$ and~there is~an~algorithm calculating the~numbers $\mu(v) = [G_{v}:C(G)]$ ($v \in V$)~\cite[\S~5]{DelgadoRobinsonTimm2017}. This allows us to~find the~graph~$\Gamma^{\prime}$ which is~obtained from~$\Gamma$ by~removing all the~edges not~included in~the~set $E^{\prime} = \{e \in E \mid H_{+e} \ne C(G) \ne H_{-e}\}$. By~Propositions~\ref{proposition601} and~\ref{proposition602}, to~complete the~verification of~the~condition of~Statement~2\nobreakdash-\textit{c}, it~remains to~apply Algorithm given above to~each connected component of~$\Gamma^{\prime}$.
\end{algorithm2}

\section{Proof of~Theorems~\ref{theorem05} and~\ref{theorem06}}\label{section07}

\begin{proposition}\label{proposition701}
Let $G$ be~non-solv\-a\-ble and~$\mathcal{L}(\Gamma)$ be~reduced. If~$G$ is~residually nilpotent, then all the~labels $\lambda(\varepsilon e)$ \textup{(}$e \in E$, $\varepsilon = \pm 1$\textup{)} are $p$\nobreakdash-num\-bers for~some prime number~$p$.
\end{proposition}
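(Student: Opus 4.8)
The plan is to derive the statement from Kuvaev's theorem (Proposition~\ref{proposition303}), whose $p'$-isolation conclusion is exactly what is needed. First I would observe that residual nilpotence passes to subgroups, so every finitely generated subgroup of~$G$ is residually nilpotent and hence $G$ is locally residually nilpotent; moreover each vertex group $G_{v}=\langle g_{v}\rangle$ is infinite cyclic, so it is abelian and therefore (locally) satisfies the law $[x_{1},x_{2}]=1$. The key arithmetic translation is that the edge subgroup $H_{\varepsilon e}=\langle g_{e(\varepsilon)}^{\lambda(\varepsilon e)}\rangle$ is $p'$-isolated in $G_{e(\varepsilon)}\cong\mathbb{Z}$ precisely when no prime $q\ne p$ divides $\lambda(\varepsilon e)$, that is, precisely when $\lambda(\varepsilon e)$ is a $p$-number. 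Thus, once Proposition~\ref{proposition303} is applicable, the single prime~$p$ it produces yields the conclusion immediately.

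The real work is verifying the index hypotheses of Proposition~\ref{proposition303}: for every edge one needs $[G_{e(1)}:H_{+e}]\ne1\ne[G_{e(-1)}:H_{-e}]$ and $[G_{e(1)}:H_{+e}]\cdot[G_{e(-1)}:H_{-e}]>4$, i.e.\ $|\lambda(\pm e)|\ne1$ and $|\lambda(+e)\lambda(-e)|>4$. Since $\mathcal{L}(\Gamma)$ is reduced, every non-loop edge already has $|\lambda(\pm e)|\ge2$, so these conditions can fail only on (a)~loops carrying a label $\pm1$ and (b)~edges both of whose labels are $\pm2$. To clear away type~(a) I would first establish $\operatorname{Im}\Delta\subseteq\{1,-1\}$: if it failed, then by Propositions~\ref{proposition401} and~\ref{proposition402} the group~$G$ would contain a copy of some $\mathrm{BS}(m,n)$ (or $\mathrm{BS}(q,qn)$) with $1\le m<|n|$, which by Theorem~\ref{theorem02} is not residually nilpotent---impossible for a subgroup of the residually nilpotent group~$G$. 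With $\operatorname{Im}\Delta\subseteq\{1,-1\}$ every loop satisfies $|\lambda(+e)|=|\lambda(-e)|$, so the only surviving type-(a) loops carry labels $(\pm1,\pm1)$; these contribute only unit labels (which are $p$-numbers for \emph{every}~$p$), and they may simply be deleted from~$\Gamma$ without changing either the vertex set or the connectivity.

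After this trimming the only exceptional edges left are the $(\pm2,\pm2)$-edges, all of whose labels are $2$-numbers. If the trimmed graph consists entirely of such edges, then every label lies in $\{\pm1,\pm2\}$ and $p=2$ works outright, with no appeal to Proposition~\ref{proposition303}. Otherwise I would apply Proposition~\ref{proposition303} to the connected components of the subgraph obtained by removing the $(\pm2,\pm2)$-edges as well, obtaining on each component a prime for which that component's labels are $p$-numbers, and then reconcile these primes with one another and with the prime~$2$ dictated by any exceptional edge. The reconciliation rests on the fact that two incident edges share a vertex group $\langle g_{v}\rangle\cong\mathbb{Z}$, inside which all edge subgroups at~$v$ must be simultaneously isolated with respect to the same prime; an exceptional edge at~$v$ yields a $P(2,\pm2)$- or $\mathrm{BS}(2,\pm2)$-subgroup which, via Proposition~\ref{proposition304} and Theorem~\ref{theorem02}, forces that prime to be~$2$, and the connectedness of~$\Gamma$ then pins down one global prime.

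I expect this last bookkeeping to be the main obstacle: one must confirm that Proposition~\ref{proposition303} genuinely applies componentwise once the exceptional edges are removed, and that the prime it returns on each component is compelled to agree with the prime~$2$ forced by any adjacent $(\pm2,\pm2)$-edge, the incompatibility of two distinct primes being ruled out because it would contradict residual (prime-power-order) finiteness of the relevant two-generator subgroup (Proposition~\ref{proposition205}). Everything else---the passage to subgroups, the identity in the vertex groups, and the elementary arithmetic behind the equivalence ``$p'$-isolated $\Leftrightarrow$ $p$-number''---is routine.
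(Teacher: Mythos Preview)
Your reduction to the case $|\lambda(\varepsilon e)|\ne 1$ for all edges is exactly the paper's first step, and your translation ``$p'$-isolated $\Leftrightarrow$ $p$-number'' is correct. The divergence---and the gap---is in how you handle the $(\pm2,\pm2)$ edges.

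The paper does \emph{not} remove the exceptional edges and reconcile afterwards. Its dichotomy is simpler: either every edge has at least one label of absolute value~$>2$ (so the product condition of Proposition~\ref{proposition303} holds on the whole graph and the result follows at once), or some edge~$e$ has $|\lambda(+e)|=|\lambda(-e)|=2$, in which case the paper shows directly that \emph{every} label is a $2$-number. For the latter it supposes there is an edge~$f$ with a label divisible by an odd prime~$p$, arranges (possibly after a single inverse elementary collapse) that $e$ and~$f$ are the first and last edges of a simple chain~$\Omega$, and inside $\pi_1(\mathcal{L}(\Omega))$ writes down the nested commutator
\[
x=\bigl[\,[g_{e(1)},g_{e(-1)}]\,,\,[g_{f(-\varepsilon)},g_{f(\varepsilon)}^{\lambda(\varepsilon f)/p}]\,\bigr].
\]
A reduced-form count shows $x\ne 1$, while for any homomorphism~$\psi$ onto a finite $q$-group one of the two inner commutators dies (the first if $q\ne 2$, the second if $q\ne p$); hence $x\psi=1$ for every prime~$q$, contradicting Proposition~\ref{proposition205}.

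Your reconciliation sketch does not supply anything equivalent to this. Proposition~\ref{proposition304} and Theorem~\ref{theorem02} applied to $P(2,\pm2)$ or $\mathrm{BS}(2,\pm2)$ only tell you that these particular subgroups are residually nilpotent and that $2$ must lie in $\rho(\mathcal{C})$---they place no constraint on the prime that Proposition~\ref{proposition303} returns for a neighbouring component, because that prime is determined solely by the labels \emph{inside} the component. Concretely: if one component has a label~$\pm3$, Proposition~\ref{proposition303} will return $p=3$, and nothing in your toolbox forces this to become~$2$ just because a $(\pm2,\pm2)$ edge hangs off a boundary vertex. The incompatibility can only be seen by looking at an element that straddles both edges, which is exactly what the paper's commutator~$x$ does. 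A ``two-generator subgroup'' containing only one of the two edges cannot witness it; the relevant subgroup is the tree product along a chain through both edges, and the argument there is precisely the explicit construction above.
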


\begin{proof}
Since $\mathcal{L}(\Gamma)$ is~reduced, then $|\lambda(+e)| \ne 1 \ne |\lambda(-e)|$ for~each edge~$e$ that is~not~a~loop. Let us show that these relations can be~assumed to~hold for~all loops of~$\mathcal{L}(\Gamma)$.

By~Proposition~\ref{proposition205}, $G$ is~residually finite, and,~by~Theorem~\ref{theorem03}, $\operatorname{Im}\Delta \subseteq \{1,-1\}$. It~follows that $\mathcal{L}(\Gamma)$ cannot contain a~loop~$e$ such that $|\lambda(\varepsilon e)| = 1 \ne |\lambda(-\varepsilon e)|$ for~some $\varepsilon = \pm 1$. Let $\Gamma^{\prime}$ be~the~subgraph of~$\Gamma$ which is~obtained from~the~latter by~removing each loop~$e$ such that (in~$\mathcal{L}(\Gamma))$ $|\lambda(+e)| = 1 = |\lambda(-e)|$, and~let $G^{\prime} = \pi_{1}(\mathcal{L}(\Gamma^{\prime}))$. Then, by~Proposition~\ref{proposition301}, $G^{\prime}$ is~isomorphic to~a~subgroup of~$G$ and~so is~residually nilpotent. Since\linebreak $|\lambda(+e)| \ne 1 \ne |\lambda(-e)|$ for~every edge~$e$ of~$\Gamma^{\prime}$ and~$1$ is~a~power of~any number~$p$, then we can consider $\Gamma^{\prime}$, $\mathcal{L}(\Gamma^{\prime})$, and~$G^{\prime}$ instead of~$\Gamma$, $\mathcal{L}(\Gamma)$, and~$G$ respectively.

So, we assume that $|\lambda(\varepsilon e)| \ne 1$ for~all $e \in E$, $\varepsilon = \pm 1$. If, for~any edge $e \in E$, at~least one of~the~numbers $|\lambda(+e)|$, $|\lambda(-e)|$ is~greater than~$2$, then the~required statement follows from~Proposition~\ref{proposition303}. Therefore, it remains to show that if $|\lambda(+e)| = 2 = |\lambda(-e)|$ for~some edge $e \in E$, then all the~labels $\lambda(\varepsilon f)$ $(f \in E$, $\varepsilon = \pm 1)$ are $2$\nobreakdash-num\-bers.

On~the~contrary, let $f \in E$ be~an~edge such that at~least one of~the~numbers $\lambda(+f)$, $\lambda(-f)$ is~divisible by~a~prime number $p \ne 2$. Let us show that $\Gamma$ and~$\mathcal{L}(\Gamma)$ can, if~necessary, be~modified so that a)~$\Gamma$ contains a~simple chain, the~first and~last edges of~which are $e$ and~$f$; b)~the~fundamental groups of~the~original and~modified labeled graphs are isomorphic.

Indeed, if~there~is no~chain of~the~indicated type in~$\Gamma$, then at~least one of~the~following statements holds: 1)~$e$ and~$f$ are not~loops and~connect identical pairs of~vertices, i.\;e.~$f(1) = e(\varepsilon)$ and~$f(-1) = e(-\varepsilon)$ for~some $\varepsilon = \pm 1$; 2)~$e$~is~a~loop; 3)~$f$~is~a~loop. In~the~first case, we add to~$\Gamma$ a~new vertex~$v_{f}$ and~an~edge connecting this vertex with~$f(-1)$; replace~$f$ with~an~edge connecting $f(1)$ and~$v_{f}$; in~$\mathcal{L}(\Gamma)$, assign the~labels~$(1,1)$ to~the~first of~the~added edges, $\lambda(+f)$ (at $f(1)$) and~$\lambda(-f)$ (at $v_{f}$) to~the~second. We perform exactly the~same transformations in~the~third case and~modify $\Gamma$ and~$\mathcal{L}(\Gamma)$ in~a~similar way in~the~second. In~all cases, the~original labeled graph is~obtained from~the~modified one by~an~elementary collapse; therefore, their fundamental groups are isomorphic.

Let $\Omega$ be~a~simple chain in~$\Gamma$ that begins with~$e$ and~ends with~$f$. By~Proposition~\ref{proposition301}, $\pi_{1}(\mathcal{L}(\Omega))$ is~embedded in~$\pi_{1}(\mathcal{L}(\Gamma))$ and~so is~residually nilpotent. For~definiteness, let $e(1)$ and~$f(-1)$ be~the~ends of~the~chain, and~let $\varepsilon = \pm 1$ be~a~number such that $p \mid \lambda(\varepsilon f)$. Consider the~elements
\begin{gather*}
x_{\vphantom{(}1}^{\vphantom{(}} = 
\big[g_{e(1)}^{\vphantom{\lambda(f)}},\,
g_{e(-1)}^{\vphantom{\lambda(f)}}\big],\quad
x_{\vphantom{(}2}^{\vphantom{(}} = 
\big[g_{f(-\varepsilon)}^{\vphantom{\lambda(f)}},\, 
g_{f(\varepsilon)}^{\lambda(\varepsilon f)/p}\big],\\
x = \big[x_{\vphantom{(}1}^{\vphantom{(}},\,
x_{\vphantom{(}2}^{\vphantom{(}}\big] = 
g_{e(-1)}^{-1}
g_{e(1)}^{-1}
g_{\vphantom{(}e(-1)}^{\vphantom{(}}
g_{\vphantom{(}e(1)}^{\phantom{(}}
x_{\vphantom{(}2}^{-1}
g_{e(1)}^{-1}
g_{e(-1)}^{-1}
g_{\vphantom{(}e(1)}^{\phantom{(}}
g_{\vphantom{(}e(-1)}^{\phantom{(}}
x_{\vphantom{(}2}^{\vphantom{(}}.
\end{gather*}

Let $\Omega_{1}$ be~the~chain obtained from~$\Omega$ by~removing $e(1)$ and~$e$, $\Omega_{2}$ be~the~chain obtained from~$\Omega$ by~removing $f(-1)$ and~$f$, $F_{1} = \pi_{1}(\mathcal{L}(\Omega_{1}))$, and~$F_{2} = \pi_{1}(\mathcal{L}(\Omega_{2}))$. Then $\pi_{1}(\mathcal{L}(\Omega))$ is~the~free product~$P_{1}$ of~the~groups $G_{e(1)}$, $F_{1}$ with~the~subgroups $H_{+e}$, $H_{-e}$ amalgamated and,~at~the~same time, the~free product~$P_{2}$ of~the~groups $F_{2}$, $G_{f(-1)}$ with~the~subgroups $H_{+f}$, $H_{-f}$ amalgamated. Since $|\lambda(-\varepsilon f)| \ne 1$ and~$|\lambda(\varepsilon f)/p| < |\lambda(\varepsilon f)|$, then $x_{2}$ has a~reduced form of~length~$4$ in~$P_{2}$ and~hence does not~belong to~the~free factor~$F_{2}$ and~its subgroup~$H_{-e}$. It~follows from~this and~the~equalities $|\lambda(+e)| = 2 = |\lambda(-e)|$ that $x$ has a~reduced form of~length at~least~$8$ in~$P_{1}$ and~therefore is~different from~$1$.

Let $q$ be~an~arbitrary prime number and~$\psi$ be~a~homomorphism of~$\pi_{1}(\mathcal{L}(\Omega))$ onto~a~finite $q$\nobreakdash-group. If~$q \ne 2$ and~$r$ is~the~order of~$g_{e(1)}\psi$, then $(r,2) = 1$. This equality and~the~inclusions $g_{e(1)}^{2}\psi \in H_{\vphantom{(}+e}^{\vphantom{1}}\psi$, $g_{e(1)}^{r}\psi \in H_{\vphantom{(}+e}^{\vphantom{1}}\psi$ imply that $g_{e(1)}\psi \in H_{+e}\psi = H_{-e}\psi$ and~$x_{1}\psi = 1$. Similarly, if~$q \ne p$ and~$s$ is~the~order of~$g_{f(\varepsilon)}^{\lambda(\varepsilon f)/p}\psi$, then it~follows from~the~inclusions
$$
\big(g_{f(\varepsilon)}^{\lambda(\varepsilon f)/p}\big)^{p}\psi \in H_{\vphantom{(}\varepsilon f}^{\vphantom{(}}\psi,\quad
\big(g_{f(\varepsilon)}^{\lambda(\varepsilon f)/p}\big)^{s}\psi \in H_{\vphantom{(}\varepsilon f}^{\vphantom{(}}\psi
$$
that $x_{2}\psi = 1$. Thus, for~each homomorphism of~$\pi_{1}(\mathcal{L}(\Omega))$ onto~a~finite group of~prime power order, the~image of~$x$ turns~out to~be equal to~$1$. This contradicts the~residual nilpotence of~$\pi_{1}(\mathcal{L}(\Omega))$ by~Proposition~\ref{proposition205}.
\end{proof}

\begin{proof}[\textup{\textbf{Proof of~Theorem~\ref{theorem05}.}}]
1.\hspace{1ex}If $G$ is~residually nilpotent, then, by~Proposition~\ref{proposition701}, all the~labels~$\lambda(\varepsilon e)$ ($e \in E$, $\varepsilon = \pm 1$) are $p$\nobreakdash-num\-bers for~some prime number~$p$. This fact, the~equality $\operatorname{Im}\Delta = \{1\}$, and~Theorem~\ref{theorem03} imply the~residual $p$\nobreakdash-fi\-nite\-ness of~$G$. Since every finite $p$\nobreakdash-group is~nilpotent, the~inverse statement is~obvious.

2.\hspace{1ex}The~implication $b \Rightarrow a$ is~obvious.

$a \Rightarrow c$.\hspace{1ex}By~Proposition~\ref{proposition701}, all the~labels~$\lambda(\varepsilon e)$ ($e \in E$, $\varepsilon = \pm 1$) are $p$\nobreakdash-num\-bers for~some prime number~$p$. Suppose that $p \ne 2$ and~there exists an~elliptic element~$a$, which is~conjugate to~its inverse but~does not~belong to~the~cyclic radical of~$G$.

Let $T$ be~some fixed maximal subtree of~$\Gamma$ and~$E_T$ be~the~edge set of~$T$. Replacing, if~necessary, $a$ by~its conjugate, we can assume that $a \in G_{v}$ for~some $v \in V$. Since $C(G)$ is~normal in~$G$, then $a$ still does not~belong to~$C(G)$ after the~replacement. We put
\begin{align*}
E_{1} &= \big\{e \in E\ \big|\ |\lambda(+e)| = 1 = |\lambda(-e)|\big\},\\[-2pt]
E_{2} &= \big\{e \in E\ \big|\ |\lambda(+e)| \ne 1 \ne |\lambda(-e)|\big\}
\end{align*}
and~show that there exists an~edge $e \in E_{2}$ such that $a \notin H_{\varepsilon e}$ for~some $\varepsilon = \pm 1$.

Indeed, $C(G) = \bigcap_{e \in E,\,\varepsilon = \pm 1} H_{\varepsilon e}$ by~Proposition~\ref{proposition404}. Since $\mathcal{L}(\Gamma)$ is~reduced, every edge $e \in E$ that is~not~a~loop belongs to~$E_{2}$. In~particular, $E_{T} \subseteq E_{2}$. If~$e$ is~a~loop,\linebreak then $\Delta(t_{e}) = \lambda(-e)/\lambda(+e)$ and~it~follows from~the~equality $\operatorname{Im}\Delta = \{1,-1\}$ that either $e \in E_{1}$, or~$e \in E_{2}$. If~$E = E_{1}$, then $\Gamma$ has only one vertex, $C(G)$ coincides with~the~corresponding vertex group and~therefore contains all the~elliptic elements of~$G$, what contradicts the~relation $a \notin C(G)$. Hence, either $\mathcal{L}(\Gamma)$ has one vertex and~at~least one loop $e \in E_{2}$, or~it~contains at~least two vertices and~then each vertex is~incident to~some edge $e \in E_{T} \subseteq E_{2}$. In~both cases, $C(G) = \bigcap_{e \in E_{2},\,\varepsilon = \pm 1} H_{\varepsilon e}$, and~this implies the~existence of~the~sought edge~$e$.

We now consider two cases.

\textit{Case~1}.\hspace{1ex}$e \in E_{T}$.

It is~easy to~see that there~is a~simple chain~$\Omega$ in~$T$ containing $e$ and~such that one of~its ends coincides with~$v$, while the~other does with~$e(\delta)$ for~some $\delta = \pm 1$. By~Proposition~\ref{proposition301}, $\pi_{1}(\mathcal{L}(\Omega))$ is~embedded in~$G$ by~means of~the~identity mapping of~the~generators.

Let $\Omega^{\prime}$ be~the~chain obtained from~$\Omega$ by~removing~$e$ and~$e(\delta)$. Then $\pi_{1}(\mathcal{L}(\Omega))$ is~the~free product of~the~groups $\pi_{1}(\mathcal{L}(\Omega^{\prime}))$ and~$G_{e(\delta)}$ with~the~subgroups $H_{-\delta e}$ and~$H_{\delta e}$ amalgamated. Consider the~elements
$$
x_{\vphantom{(}1}^{\vphantom{y}} = \big[g_{e(\delta)}^{\vphantom{-1}}, g_{e(-\delta)}^{\vphantom{-1}}\big],\quad
x_{\vphantom{(}2}^{\vphantom{y}} = 
\big[x_{\vphantom{(}1}^{\vphantom{y}},a\big] = 
g_{e(-\delta)}^{-1}g_{e(\delta)}^{-1}g_{e(-\delta)}^{\vphantom{-1}}g_{e(\delta)}^{\vphantom{-1}}a^{-1}g_{e(\delta)}^{-1}g_{e(-\delta)}^{-1}g_{e(\delta)}^{\vphantom{-1}}g_{e(-\delta)}^{\vphantom{-1}}a.
$$
Since $|\lambda(+e)| \ne 1 \ne |\lambda(-e)|$, $a \notin H_{\varepsilon e}$, and~the~equality $H_{\varepsilon e} = H_{-\varepsilon e}$ holds in~$\pi_{1}(\mathcal{L}(\Omega))$, then $x_{2}$ has a~reduced form of~length at~least~$8$ in~this group and~therefore is~different from~$1$.

Let $q$ be~an~arbitrary prime number and~$\psi$ be~a~homomorphism of~$G$ onto~a~finite $q$\nobreakdash-group. If~$q \ne 2$, then $a\psi = 1$ by~Proposition~\ref{proposition206}. Let $q = 2$ and~$r$ be~the~order of~$g_{e(1)}\psi$. Since $\lambda(+e)$ is~a~$p$\nobreakdash-num\-ber and~$p \ne 2$, then $(r,\lambda(+e)) = 1$. It~follows that $g_{e(1)}\psi \in H_{+e}\psi = H_{-e}\psi$ and~$x_{1}\psi = 1$. Thus, for~any value of~$q$, the~equality $x_{2}\psi = 1$ holds. By~Proposition~\ref{proposition205}, this contradicts the~residual nilpotence of~$G$.

\textit{Case~2}.\hspace{1ex}$e \notin E_{T}$.

Let $x_{\vphantom{(}1}^{\vphantom{\varepsilon}}\kern-1pt{} =\kern-1pt{} \big[t_{\vphantom{(}e}^{\vphantom{y}\varepsilon}
g_{e(-\varepsilon)}^{\vphantom{y}}
t_{\vphantom{(}e}^{\vphantom{y}-\varepsilon}\kern-4pt,\kern1pt{} 
g_{e(\varepsilon)}^{\vphantom{y}}\big]$. It~follows from~the~relations $|\lambda(+e)| \ne 1 \ne |\lambda(-e)|$, $a \notin H_{\varepsilon e}$ that the~element
$$
x_{2\vphantom{(}}^{\vphantom{-1}} =
\big[x_{1\vphantom{(}}^{\vphantom{-1}},a\big] = 
g_{e(\varepsilon)}^{-1}
t_{e\vphantom{(}}^{\vphantom{-1}\varepsilon}
g_{e(-\varepsilon)}^{-1}
t_{e\vphantom{(}}^{\vphantom{1}-\varepsilon}
g_{e(\varepsilon)}^{\vphantom{-1}}
t_{e\vphantom{(}}^{\vphantom{-1}\varepsilon}
g_{e(-\varepsilon)}^{\vphantom{-1}}
t_{e\vphantom{(}}^{\vphantom{1}-\varepsilon}
a^{-1}
t_{e\vphantom{(}}^{\vphantom{-1}\varepsilon}
g_{e(-\varepsilon)}^{-1}
t_{e\vphantom{(}}^{\vphantom{1}-\varepsilon}
g_{e(\varepsilon)}^{-1}
t_{e\vphantom{(}}^{\vphantom{-1}\varepsilon}
g_{e(-\varepsilon)}^{\vphantom{-1}}
t_{e\vphantom{(}}^{\vphantom{1}-\varepsilon}
g_{e(\varepsilon)}^{\vphantom{-1}}
a
$$
has a~reduced form of~length~$8$ in~the~group~$G$ considered as~an~HNN-ex\-ten\-sion with~the~stable letter~$t_{e}$. Therefore, $x_{2} \ne 1$. However, as~above, if~$\psi$ is~a~homomorphism of~$G$ onto~a~finite $2$\nobreakdash-group, then $g_{e(-\varepsilon)}\psi \in H_{-\varepsilon e}\psi$, whence 
$\big(t_{e\vphantom{(}}^{\vphantom{y}\varepsilon}
g_{e(-\varepsilon)}^{\vphantom{-1}}
t_{e\vphantom{(}}^{\vphantom{y}-\varepsilon}\big)
\psi \in H_{\varepsilon e\vphantom{(}}^{\vphantom{-1}}\psi$
and~so $x_{1}\psi = 1$. Thus, in~Case~2, the~image of~$x_{2}$ is~also equal to~$1$ for~any homomorphism of~$G$ onto~a~group of~prime power order, and~we again get a~contradiction with~the~residual nilpotence of~$G$.

$c \Rightarrow b$.\hspace{1ex}Choose some maximal subtree~$T$ in~$\Gamma$ and~transform $\mathcal{L}(\Gamma)$ to~a~$T$\nobreakdash-posi\-tive form. Since this operation consists only in~replacing some of~the~generators~$g_{v}$ ($v \in V$) by~their inverse, then, after it, the~conditions of~Statement~2\nobreakdash-\textit{c} remain valid.

If all the~labels~$\lambda(\varepsilon e)$ ($e \in E$, $\varepsilon = \pm 1$) are $2$\nobreakdash-num\-bers, then, by~Propositions~\ref{proposition404} and~\ref{proposition502}, $G$ is~an~extension of~the~direct product of~two free groups by~a~finite $2$\nobreakdash-group. Such an~extension is~residually a~finite $2$\nobreakdash-group by~Proposition~\ref{proposition204}. So, further, we assume that $p \ne 2$.

Let $g \in G$ be~an~arbitrary non-unit element. We show that there exists a~homomorphism of~$G$ onto~a~finite $p$\nobreakdash-group or~a~finite $2$\nobreakdash-group taking~$g$ to~a~non-unit element.

By~Proposition~\ref{proposition602}, $G/C(G)$ is~residually a~finite $p$\nobreakdash-group. Therefore, if~$g \notin C(G)$, then the~natural homomorphism of~$G$ onto~$G/C(G)$ can be~extended to~the~desired one. Hence, further, we can assume that $g \in C(G)$.

Let $Q$\kern-.5pt{}, $X$\kern-1.5pt{}, and~$\sigma\kern-1pt{}\colon\kern-1pt{} G\kern-1pt{} \to\kern-1.5pt{} X$ be~the~subring, the~group, and~the~homomorphism from~Proposition~\ref{proposition501}. Since $\operatorname{Im}\Delta = \{1,-1\}$, then $Q = \mathbb{Z}$ and~$X$ has the~presentation
$$
\big\langle x, a_{1}^{\vphantom{1}}, a_{-1}^{\vphantom{1}};\ 
[x,a_{1}^{\vphantom{1}}] = 
[a_{1}^{\vphantom{1}},a_{-1}^{\vphantom{1}}] = 1,\ 
a_{-1}^{-1}xa_{-1}^{\vphantom{1}} = x^{-1} \big\rangle
$$
(here, as~above, $x$~denotes the~generator of~the~additive group~$Q^{+}$ of~$Q$ equal to~$1$). By~Proposition~\ref{proposition404}, $C(G) \leqslant \bigcap_{v \in V} G_{v}$. Hence, $g \in G_{v}$ for~each $v \in V$, and,~by~the~definition of~$\sigma$, the~inclusion $g\sigma \in \langle x \rangle \setminus \{1\}$ holds. Therefore, $g\sigma$ is~mapped to~a~non-unit element under~the~homomorphism of~$X$ onto~the~group
$$
\mathrm{BS}(1,-1) = \big\langle x, a_{-1}^{\vphantom{1}};\ 
a_{-1}^{-1}xa_{-1}^{\vphantom{1}} = x^{-1} \big\rangle.
$$
The~latter is~residually a~finite $2$\nobreakdash-group by~Theorem~\ref{theorem01}. Thus, the~constructed homomorphism $G \to \mathrm{BS}(1,-1)$ can be~extended to~the~required~one.

3.\hspace{1ex}Since $\operatorname{Im}\Delta \not\subseteq \{1,-1\}$, then $G$ is~not~residually finite by~Theorem~\ref{theorem03} and~is~not~residually nilpotent by~Proposition~\ref{proposition205}.
\end{proof}

\begin{proof}[\textup{\textbf{Proof of~Theorem~\ref{theorem06}.}}]
$1 \Rightarrow 3$.\hspace{1ex}Since $G$ is~residually a~tor\-sion-free nilpotent group, then, by~Proposition~\ref{proposition205}, it~is~residually a~finite $p$\nobreakdash-group for~any prime number~$p$. Therefore, by~Theorem~\ref{theorem01}, $G$ cannot be~isomorphic to~$\mathrm{BS}(1,n)$, where $n \ne 1$. Obviously, $\mathrm{BS}(1,1)$ satisfies Statement~3. So, further, we can assume that $G$ is~non-solv\-a\-ble and~the~labeled graph~$\mathcal{L}(\Gamma)$ defining it~is~reduced. Then, by~Theorem~\ref{theorem03}, $\operatorname{Im}\Delta = \{1\}$ and~$|\lambda(\varepsilon e)| = 1$ for~all $e \in E$, $\varepsilon = \pm 1$. This means that $\Gamma$ has one vertex~$v$ and~$G$ is~an~extension of~the~vertex group~$G_{v}$ by~the~free group generated by~the~elements~$t_{e}$ ($e \in E$). Since such an~extension is~splittable, $G$ contains a~free subgroup~$F$ such that $G = G_{v}F$ and~\mbox{$G_{v} \cap F = 1$}. It~follows from~the~equality $\operatorname{Im}\Delta = \{1\}$ and~Proposition~\ref{proposition404} that $G_{v}$ lies in~the~center of~$G$. Therefore, $G = G_{v} \times F$, as~required.

$3 \Rightarrow 2$.\hspace{1ex}The~direct product of~two free groups is~residually free by~\cite[Lemma~1.1]{Gruenberg1957}.

$2 \Rightarrow 1$.\hspace{1ex}It is~well known that, for~an~arbitrary free group, the~intersection of~the~members of~its lower central series is~trivial~\cite{Magnus1935} and~the~factors of~this series are free abelian groups~\cite{HallM1950}. Therefore, every free group is~residually a~tor\-sion-free nilpotent group.
\end{proof}

\section*{Acknowledgements}

The~author would like to~thank F.~A.~Dudkin (Sobolev Institute of~Mathematics, Russia) for~the~introduction to~modern studies of~generalized Baumslag--Solitar groups.


\begin{thebibliography}{99}



\bibitem{Alonso2012}
\href{http://dx.doi.org/10.2140/agt.2012.12.2027}{\textit{J.\hspace{.5ex}Alonso}},\hspace{.8ex}JSJ~decompositions of~quadratic Baumslag--Solitar groups, Algebr. Geom. Topol. 12 (2012) 2027\nobreakdash--2047.

\bibitem{AzarovTieudjo2002}
\textit{D.\hspace{.5ex}N.\hspace{.5ex}Azarov\textup{,}\hspace{.8ex}D.\hspace{.5ex}Tieudjo},\hspace{.8ex}On~the~root-class residuality of~a~free product of~groups with an~amalga\-mated subgroup, Nauch. Tr. Ivanov. Gos. Univ. Mat. 5 (2002) 6\nobreakdash--10 (Russian). See also: \textit{\mbox{D.\hspace{.5ex}N.\hspace{.5ex}Azarov}, D.\hspace{.5ex}Tieudjo}, On~root-class residuality of~generalized free products, \href{https://arxiv.org/abs/math/0408277}{arXiv:math.GR/0408277}.

\bibitem{BardakovNeshchadim2020}
\textit{V.\hspace{.5ex}G.\hspace{.5ex}Bardakov\textup{,}\hspace{.8ex}M.\hspace{.5ex}V.\hspace{.5ex}Neshchadim}, On~the~lower central series of~Baumslag--Solitar groups, to~appear in:~Algebra Logic 59 (4) (2020). See also: \href{https://arxiv.org/abs/2009.01150}{arXiv:2009.01150}.

\bibitem{Britton1963}
\href{http://dx.doi.org/10.2307/1970200}{\textit{J.\hspace{.5ex}L.\hspace{.5ex}Britton}},\hspace{.8ex}The~word problem, Math. Ann. Second Ser. 77 (1) (1963) 16\nobreakdash--32.

\bibitem{DelgadoRobinsonTimm2011}
\href{http://dx.doi.org/10.1016/j.jpaa.2010.04.025}{\textit{A.\hspace{.5ex}L.\hspace{.5ex}Delgado\textup{,}\hspace{.8ex}D.\hspace{.5ex}J.\hspace{.5ex}S.\hspace{.5ex}Robinson\textup{,}\hspace{.8ex}M.\hspace{.5ex}Timm}},\hspace{.8ex}Generalized Baumslag--Solitar groups and~geometric ho\-momorphisms, J.\hspace{.5ex}Pure Appl. Algebra 215 (4) (2011) 398\nobreakdash--410.

\bibitem{DelgadoRobinsonTimm2014}
\href{http://dx.doi.org/10.1142/S1005386714000030}{\textit{A.\hspace{.5ex}L.\hspace{.5ex}Delgado\textup{,}\hspace{.8ex}D.\hspace{.5ex}J.\hspace{.5ex}S.\hspace{.5ex}Robinson\textup{,}\hspace{.8ex}M.\hspace{.5ex}Timm}},\hspace{.8ex}Generalized Baumslag--Solitar graphs with~soluble fun\-damental groups, Algebra Colloq. 21 (1) (2014) 53\nobreakdash--58.

\bibitem{DelgadoRobinsonTimm2017}
\href{http://dx.doi.org/10.1080/00927872.2016.1226859}{\textit{A.\hspace{.5ex}L.\hspace{.5ex}Delgado\textup{,}\hspace{.8ex}D.\hspace{.5ex}J.\hspace{.5ex}S.\hspace{.5ex}Robinson\textup{,}\hspace{.8ex}M.\hspace{.5ex}Timm}},\hspace{.8ex}Cyclic normal subgroups of~generalized Baumslag--Soli\-tar groups, Comm. Algebra 45 (4) (2017) 1808\nobreakdash--1818.

\bibitem{Dudkin2017}
\href{http://dx.doi.org/10.1007/s10469-017-9440-y}{\textit{F.\hspace{.5ex}A.\hspace{.5ex}Dudkin}},\hspace{.8ex}The~isomorphism problem for~generalized Baumslag--Solitar groups with~one mobile edge, Algebra Logic 56 (3) (2017) 197\nobreakdash--209.

\bibitem{Dudkin2018}
\href{http://dx.doi.org/10.1134/S0037446618030035}{\textit{F.\hspace{.5ex}A.\hspace{.5ex}Dudkin}},\hspace{.8ex}On~the~centralizer dimension and~lattice of~generalized Baumslag--Solitar groups, Sib. Math.\hspace{.5ex}J. 59 (3) (2018) 403\nobreakdash--414.

\bibitem{Dudkin2020}
\href{http://dx.doi.org/10.1007/s00013-019-01404-8}{\textit{F.\hspace{.5ex}A.\hspace{.5ex}Dudkin}},\hspace{.8ex}$\mathcal{F}_{\pi}$\nobreakdash-residuality of~generalized Baumslag--Solitar groups, Arch. Math. 114 (2020) 129\nobreakdash--134.

\bibitem{Forester2002}
\href{http://dx.doi.org/10.2140/gt.2002.6.219}{\textit{M.\hspace{.5ex}Forester}},\hspace{.8ex}Deformation and~rigidity of~simplicial group actions on~trees, Geom. Topol. 6 (2002) 219\nobreakdash--267.

\bibitem{Gruenberg1957}
\href{http://dx.doi.org/10.1112/plms/s3-7.1.29}{\textit{K.\hspace{.5ex}W.\hspace{.5ex}Gruenberg}},\hspace{.8ex}Residual properties of~infinite soluble groups, Proc. London Math. Soc. s3\nobreakdash-7 (1) (1957) 29\nobreakdash--62.

\bibitem{HallM1950}
\href{http://dx.doi.org/10.2307/2032282}{\textit{M.\hspace{.5ex}Hall}},\hspace{.8ex}A~basis for~free Lie rings and~higher commutators in~free groups, Proc. Amer. Math. Soc. 1 (5) (1950) 575\nobreakdash--581.

\bibitem{Kropholler1990}
\href{http://dx.doi.org/10.1017/S0305004100068523}{\textit{P.\hspace{.5ex}H.\hspace{.5ex}Kropholler}},\hspace{.8ex}A~note on~centrality in~$3$\nobreakdash-manifold groups, Math. Proc. Camb. Phil. Soc. 107 (2) (1990) 261\nobreakdash--266.

\bibitem{Kuvaev2019}
\href{http://dx.doi.org/10.1134/S0037446619060120}{\textit{A.\hspace{.5ex}E.\hspace{.5ex}Kuvaev}},\hspace{.8ex}Necessary conditions for~the~residual nilpotency of~certain group theory constructions, Sib. Math.~J. 60 (6) (2019) 1040\nobreakdash--1050.

\bibitem{Levitt2007}
\href{http://dx.doi.org/10.2140/gt.2007.11.473}{\textit{G.\hspace{.5ex}Levitt}},\hspace{.8ex}On~the~automorphism group of~generalized Baumslag--Solitar groups, Geom. Topol. 11 (2007) 473\nobreakdash--515.

\bibitem{Levitt2015}
\href{http://dx.doi.org/10.1515/jgth-2014-0028}{\textit{G.\hspace{.5ex}Levitt}},\hspace{.8ex}Quotients and~subgroups of~Baumslag--Solitar groups, J.~Group Theory 18 (1) (2015), 1\nobreakdash--43.

\bibitem{LyndonSchupp1980}
\textit{R.\hspace{.5ex}C.\hspace{.5ex}Lyndon\textup{,}\hspace{.8ex}P.\hspace{.5ex}E.\hspace{.5ex}Schupp},\hspace{.8ex}Combinatorial Group Theory (Springer-Verlag, New York, 1977).

\bibitem{Magnus1935}
\href{http://dx.doi.org/10.1007/BF01472217}{\textit{W.\hspace{.5ex}Magnus}},\hspace{.8ex}Beziehungen zwischen Gruppen und~Idealen in~einem speziellen Ring, Math. Ann. 111 (1935) 259\nobreakdash--280.

\bibitem{MagnusKarrasSolitar1974}
\textit{W.\hspace{.5ex}Magnus\textup{,}\hspace{.8ex}A.\hspace{.5ex}Karras\textup{,}\hspace{.8ex}D.\hspace{.5ex}Solitar},\hspace{.8ex}Combinatorial group theory (Dover, New York, 1976).

\bibitem{Malcev1958}
\textit{A.\hspace{.5ex}I.\hspace{.5ex}Mal'cev},\hspace{.8ex}On~homomorphisms onto~finite groups, Ivanov. Gos. Ped. Inst. Ucen. Zap. 18 (1958) 49\nobreakdash--60 (Russian). See also: \textit{A.\hspace{.5ex}I.\hspace{.5ex}Mal'cev}, On~homomorphisms onto~finite groups, Transl. Am. Math. Soc. 2 (119) (1983) 67\nobreakdash--79.

\bibitem{Moldavanskii2020}
\href{http://dx.doi.org/10.1134/S0001434620050090}{\textit{D.\hspace{.5ex}I.\hspace{.5ex}Moldavanskii}},\hspace{.8ex}Residual nilpotence of~groups with~one defining relation, Math. Notes 107 (5) (2020) 820\nobreakdash--825.

\bibitem{Robinson2010}
\href{http://dx.doi.org/10.1285/i15900932v30n1supplp37}{\textit{D.\hspace{.5ex}J.\hspace{.5ex}S.\hspace{.5ex}Robinson}},\hspace{.8ex}Recent results on~generalized Baumslag--Solitar groups, Note Mat. 30 (1) (2010), 37\nobreakdash--53.

\bibitem{Serre1980}
\textit{J.-P.\hspace{.5ex}Serre},\hspace{.8ex}Trees (Springer-Verlag, Berlin, Heidelberg, New York, 1980).

\bibitem{Sokolov2015}
{\parfillskip=0pt
\href{http://dx.doi.org/10.1080/00927872.2013.851207}{\textit{E.\hspace{.5ex}V.\hspace{.5ex}Sokolov}},\hspace{.8ex}A~characterization of~root classes of~groups, Comm. Algebra 43 (2) (2015) 856\nobreakdash--860.\par}

\bibitem{SokolovTumanova2019}
\href{http://dx.doi.org/10.1007/s10469-020-09568-x}{\textit{E.\hspace{.5ex}V.\hspace{.5ex}Sokolov\textup{,}\hspace{.8ex}E.\hspace{.5ex}A.\hspace{.5ex}Tumanova}},\hspace{.8ex}Generalized direct products of~groups and~their application to the~study of~residuality of~free constructions of~groups, Algebra Logic 58 (6) (2020) 480\nobreakdash--493.

\bibitem{SokolovTumanova2020IVM}
\href{http://dx.doi.org/10.3103/S1066369X20030044}{\textit{E.\hspace{.5ex}V.\hspace{.5ex}Sokolov\textup{,}\hspace{.8ex}E.\hspace{.5ex}A.\hspace{.5ex}Tumanova}},\hspace{.8ex}On~the~root-class residuality of~certain free products of~groups with~normal amalgamated subgroups, Russ. Math. 64 (2020) 43\nobreakdash--56.

\bibitem{SokolovTumanova2020SMJ}
\href{http://dx.doi.org/10.1134/S0037446620030180}{\textit{E.\hspace{.5ex}V.\hspace{.5ex}Sokolov\textup{,}\hspace{.8ex}E.\hspace{.5ex}A.\hspace{.5ex}Tumanova}},\hspace{.8ex}The~root-class residuality of~tree products with~central amalgamated subgroups, Sib. Math.~J. 61 (3) (2020) 545\nobreakdash--551.

\bibitem{SokolovTumanova2020LJM}
\href{http://dx.doi.org/10.1134/S1995080220020158}{\textit{E.\hspace{.5ex}V.\hspace{.5ex}Sokolov\textup{,}\hspace{.8ex}E.\hspace{.5ex}A.\hspace{.5ex}Tumanova}},\hspace{.8ex}To~the~question of~the~root-class residuality of~free constructions of~groups, Lobachevskii J. Math. 41 (2020) 260\nobreakdash--272.

\bibitem{Tumanova2017}
\href{http://dx.doi.org/10.1134/S003744661703017X}{\textit{E.\hspace{.5ex}A.\hspace{.5ex}Tumanova}},\hspace{.8ex}The~root class residuality of~Baumslag--Solitar groups, Sib. Math.~J. 58 (3) (2017) 546\nobreakdash--552.

\bibitem{Tumanova2019}
\href{http://dx.doi.org/10.1134/S0037446619040153}{\textit{E.\hspace{.5ex}A.\hspace{.5ex}Tumanova}},\hspace{.8ex}The~root class residuality of~the~tree product of~groups with~amalgamated retracts, Sib. Math.~J. 60 (4) (2019) 699\nobreakdash--708.

\enlargethispage{14pt}

\end{thebibliography}
\end{document}